\documentclass[12pt]{article}
\usepackage{amsmath,amsthm,amssymb}
\usepackage{pifont}
\usepackage{circledsteps}
\usepackage{amsfonts}
\usepackage{mathrsfs}
\usepackage{bm}
\usepackage{mathtools}
\usepackage{graphicx}
\usepackage{enumerate}
\usepackage{setspace}
\usepackage{multirow}
\usepackage{booktabs}
\usepackage{comment}

\allowdisplaybreaks[4]

\usepackage[numbers,sort&compress]{natbib}
\usepackage{authblk}
\usepackage{enumitem}
\setlist[enumerate, 1]{label = (\roman*), font = \upshape}

\usepackage[pdfencoding=auto,colorlinks,linkcolor=red,anchorcolor=blue,citecolor=green]{hyperref}
\usepackage{enumitem} 

\usepackage[capitalise,noabbrev]{cleveref}

\usepackage[margin = 1in]{geometry}

\newcommand{\email}[1]{\href{mailto:#1}{\nolinkurl{#1}}}

\usepackage{fouridx}
\newcommand*{\tran}[1]{\,\fourIdx{t\!}{}{}{}{#1}}

\theoremstyle{plain}
\newtheorem{theorem}{Theorem}[section]
\newtheorem{corollary}[theorem]{Corollary}
\newtheorem{lemma}[theorem]{Lemma}
\newtheorem{proposition}[theorem]{Proposition}

\theoremstyle{definition}
\newtheorem{definition}[theorem]{Definition}

\newtheorem{example}{Example}
\newtheorem{remark}{Remark}

\newtheorem{problem}{Problem}

\numberwithin{equation}{section}

\makeatletter

\@addtoreset{equation}{section}

\newcommand{\Rmnum}[1]{\expandafter\@slowromancap\romannumeral #1@}
\makeatother

\def\~#1{\widetilde{#1}}                                        

\DeclareMathOperator{\R}{\mathbb{R}} 

\DeclareMathOperator{\F}{\mathbb{F}}

\DeclareMathOperator{\diag}{\mathrm{diag}}
\DeclareMathOperator{\rank}{\mathrm{rank}}
\DeclareMathOperator{\tr}{\mathrm{tr}}

\title{$P$-polynomial coherent configurations
}

\author{
 Eiichi Bannai\thanks{Faculty of Mathematics, Kyushu University (emeritus), Japan. \texttt{bannai@math.kyushu-u.ac.jp}}, Sho Suda\thanks{Department of Mathematics, National Defense Academy of Japan, 239-8686 Japan \texttt{ssuda@nda.ac.jp}},  Yan Zhu\thanks{School of Mathematics, University of Shanghai for Science and Technology, Shanghai 200093, China  \texttt{zhuyan@usst.edu.cn}}
}

\begin{document}

\maketitle


\begin{abstract}
Suda introduced the notion of a $Q$-polynomial coherent configuration, which provides a natural and important concept. 
Subsequently, Lato introduced a notion of a $P$-polynomial coherent configuration and proved that every such configuration satisfying the definition has at most two fibers. 
Although Lato’s definition is interesting, particularly because it characterizes distance-biregular graphs, we argue that an alternative definition is desirable. 
In this paper, we propose an alternative notion of $P$-polynomial coherent configurations that is naturally aligned with Suda's $Q$-polynomial framework. 
We show that every two-fiber coherent configuration that is $P$-polynomial in Lato’s sense is also $P$-polynomial in our sense, whereas the converse does not hold. 
We further prove that every coherent configuration of type $(2,2;3)$, $(3,2;3)$ or $(3,3;3)$ is $P$-polynomial in our sense.
In addition, we present three families of $P$-polynomial coherent configurations with an arbitrary number of fibers: those arising from tight Euclidean
$t$-designs in $\mathbb R^2$, the Terwilliger algebra of $H(n,2)$, and the set of all subspaces of $\mathbb F_q^n$. 
Finally, we give an equivalent condition for the cross-block intersection matrices to be
tridiagonal and verify that all three families satisfy this condition.
\end{abstract}
\noindent {\bf Keywords: Coherent configuration, $P$-polynomiality, $Q$-polynomiality} 
\\
{\bf MSC(2020):} 05E30; 05B30; 05B05.
\section{Introduction}

Coherent configurations were introduced independently by Weisfeiler and Leman \cite{WL-2018} in their study of the graph isomorphism problem and by Higman \cite{Higman-1975} as an algebraic framework for investigating permutation groups and their representations. They generalize several important combinatorial and algebraic structures, including association schemes, orbitals of permutation groups, and coherent algebras. In particular, association schemes are precisely the homogeneous coherent configurations, namely those with a single fiber.
Among the known examples, the quasi-symmetric designs investigated by Goethals and Seidel \cite{GS-1970} give rise to coherent configurations of type $(2,2;3)$, and Lison{\v{e}}k \cite{Lisonek-1997} constructed a maximal two-distance set in $\R^8$ that also determines a coherent configuration of this type. Bannai and Bannai \cite{BB-2010} proved that, under suitable conditions, certain Euclidean $2e$-designs supported by two concentric spheres admit the structure of coherent configurations. Bannai, Bannai, Tanaka, and Zhu \cite{BBTZ-2022} further showed that, under a mild condition, a tight relative $2e$-design on two shells of the binary Hamming association scheme also induces a coherent configuration.

In 2022, Suda \cite{Suda-2022} introduced the concept of a $Q$-polynomial coherent configuration, extending the theory of $Q$-polynomial association schemes to coherent configurations with multiple fibers. 
In that paper, he established several equivalent conditions of the $Q$-polynomial property analogous to those in the theory of association schemes. 
In particular, the $Q$-polynomial property is characterized in terms of the tridiagonal structure of Krein matrices and the polynomial dependence of the dual eigenvalues.
Based on this definition, he provided several families of important examples, including tensor products of $Q$-polynomial association schemes, the disjoint union of spherical designs, the tight Euclidean $2e$-designs on two concentric spheres, as well as the tight relative $2e$-designs on two shells in the binary Hamming association scheme. 
The results motivate the subsequent study of the conditions under which Euclidean embeddings of $Q$-polynomial coherent configurations with two fibers give rise to Euclidean $t$-designs (see \cite{JZW-2026}).

Motivated by the work of Suda, Lato \cite{Lato-2025} introduced a concept of $P$-polynomial coherent configurations as the dual notion of $Q$-polynomial coherent configurations. 
Under this definition, every $P$-polynomial coherent configuration can have at most two fibers. 
In particular, every such configuration arises either from a distance-regular graph or from a distance-biregular graph.
The examples considered are distance-biregular graphs, quasi-symmetric designs, and strongly regular designs.
Furthermore, Fernández, Ihringer, Lato, and Munemasa \cite{FILM-2025} gave several new constructions of distance-biregular graphs. 
The authors introduce an infinite family arising from hyperovals and a new sporadic example belonging to a generalization of a construction due to Delorme.
They also establish new non-existence criteria for distance-biregular graphs ruling out the existence of a distance-biregular graph on $225+60$ vertices

In this paper, we propose an alternative notion of $P$-polynomial coherent configurations that is naturally aligned with Suda's definition of a $Q$-polynomial property.
Unlike Lato’s definition, which uses a common generator for the whole block row, our definition allows a separate generator for each block. 
We first show that, in the two-fiber case, Lato's
$P$-polynomial property implies ours. 
We then prove that every coherent configuration of type $(2,2;3)$, $(3,2;3)$, or $(3,3;3)$ is $P$-polynomial in our sense.
We also establish the $P$-polynomial property for three
families with an arbitrary number of fibers, namely, coherent configurations $\mathcal C_1$
arising from tight Euclidean $t$-designs in $\mathbb R^2$, the coherent configuration $\mathcal C_2$ associated with the Terwilliger
algebra of $H(n,2)$, and the coherent configuration $\mathcal C_3$ associatedn with  the set of all subspaces of $\mathbb F_q^n$. 
Furthermore, we give an equivalent condition under which
the cross-block intersection matrices are tridiagonal, and verify that all three families satisfy this condition.

The paper is organized as follows. In \cref{sect:experiment}, we recall coherent configurations and $Q$-polynomial 
property introduced by Suda, and review the orthogonal polynomials used in the examples presented later. In \cref{sect:pray}, we introduce our definition and compare it with Lato's definition.
In \cref{sect:scandal}, we give an equivalent condition for the tridiagonality of cross-block intersection matrices. 
\cref{sect:revenge} studies the three families with an arbitrary number of fibers.

\section{Preliminaries and orthogonal polynomials}\label{sect:experiment}
In this section, we recall Higman's definition of a coherent configuration \cite{Higman-1975} and Suda's definition of a $Q$-polynomial coherent configuration \cite{Suda-2022}. 
We also review the Gegenbauer, Hahn, and $q$-Hahn polynomials, together with their
dual families. 
These orthogonal polynomials play a fundamental role in
the three examples presented in \cref{sect:revenge}. 
For further background on orthogonal polynomials, we refer the reader to \cite{LSK-2010}.

\subsection{Coherent configuration and $Q$-polynomial property}
\begin{definition}[{\cite{Higman-1975}}]\label{def:operation}
Let $X$ be a non-empty finite set and $\mathcal{R}=\{R_i \; |\; i \in I\}$ be a set of non-empty subsets of $X \times X$.  
The pair $\mathcal{C}=(X, \mathcal{R})$ is called a coherent configuration if it satisfies the following conditions.    
        \begin{enumerate}
            \item \label{itm:def1(1)} $\mathcal{R}$ is a partition of $X \times X$. 
            \item  If $R_i \in \mathcal{R}$ and $R_i \cap \mathrm{diag}(X \times X) \neq \emptyset$, where $\mathrm{diag}(X \times X)=\{(x, x) | x \in X\}$, then $R_i \subseteq \mathrm{diag}(X \times X)$. 
            \item \label{itm:def1(3)} $R_i \in \mathcal{R}$ implies $\tran{R_i}=\{(y,x) \; | \; (x,y) \in R_i\} \in \mathcal{R}$. 
            \item For any $i,j,h\in I$, the number $|\{z\in X \mid (x,z)\in R_i,(z,y)\in R_j\}|$ is independent of the choice of $(x,y)\in R_h$. 
        \end{enumerate}
\end{definition}
Let $A_i$ be the adjacency matrix of the graph $(X,R_i)$ for $i\in I$. 
We define the coherent algebra $\mathcal{A}$ of the coherent configuration $\mathcal{C}$ as the subalgebra of $\mathrm{Mat}_{|X|}(\mathbb{C})$ generated by $\{A_i \mid i\in I\}$ over $\mathbb{C}$. 
There uniquely exists a subset $\Omega$ in $I$ such that $\mathrm{diag}(X\times X)=\sqcup_{i\in \Omega}R_i$ by \cref{def:operation}\ref{itm:def1(1)} and \ref{itm:def1(3)}.

For a subset $R$ of $X\times X$, define the projection of $R$ as follows:
\begin{align*}
\mathrm{pr}_1(R)&=\{x\in X\mid (x,y)\in R \text{ for some }y\in X\},\\
\mathrm{pr}_2(R)&=\{y\in X\mid (x,y)\in R \text{ for some }x\in X\}.
\end{align*}
We obtain the standard partition $\{X_i \mid i\in \Omega\}$ of $X$ where $X_i=\mathrm{pr}_1(R_i)=\mathrm{pr}_2(R_i)$ for $i\in\Omega$. 
We call $X_i$ a {\it{fiber}} of the coherent configuration $\mathcal{C}$.  
The following property of binary relations of coherent configurations was shown in \cite{Higman-1975}.
\begin{lemma}[{\cite{Higman-1975}}]\label{lem:relation}
For any $i\in I$, there exist $j,h\in \Omega$ such that $\mathrm{pr}_1(R_i)=X_j$, $\mathrm{pr}_2(R_i)=X_h$.
\end{lemma}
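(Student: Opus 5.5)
Fix $i\in I$ and choose a pair $(x,y)\in R_i$. The plan is to find a diagonal relation at $x$ and one at $y$, show that every point of $X_j$ (resp.\ $X_h$) starts (resp.\ ends) a pair of $R_i$, and then show that no point outside $X_j$ (resp.\ $X_h$) does. Only axiom (iv) of \cref{def:operation} and the partition property of the diagonal relations $\{R_k\mid k\in\Omega\}$ are needed.

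First, since $\mathrm{diag}(X\times X)=\bigsqcup_{k\in\Omega}R_k$, there are unique $j,h\in\Omega$ with $(x,x)\in R_j$ and $(y,y)\in R_h$. For $x'\in X$ and $k\in I$, the number of $z$ with $(x',z)\in R_k$ and $(z,x')\in R_{k'}$, where $R_{k'}={}^tR_k$, is exactly the out-valency of $x'$ in $R_k$. By axiom (iv) applied with $(x',x')$ in a diagonal relation, this count depends only on which diagonal relation contains $(x',x')$.

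Second, we show $X_j\subseteq \mathrm{pr}_1(R_i)$. Let $x'\in X_j$, so $(x',x')\in R_j$. Consider the triple $(i,i',j)$ with $R_{i'}={}^tR_i$. The number of $z$ with $(x,z)\in R_i$ and $(z,x)\in R_{i'}$ is at least $1$, witnessed by $z=y$. By axiom (iv) the same number is positive at $(x',x')\in R_j$, so $x'\in\mathrm{pr}_1(R_i)$. The symmetric argument with the triple $(i',i,h)$ at $(y,y)$ gives $X_h\subseteq\mathrm{pr}_2(R_i)$.

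Third, we prove the reverse inclusion $\mathrm{pr}_1(R_i)\subseteq X_j$, which I expect to be the main point requiring care. Suppose $(x'',y'')\in R_i$ and $(x'',x'')\in R_{j'}$ with $j'\in\Omega$. Count $z$ with $(x'',z)\in R_{j}$ and $(z,y'')\in R_i$. Since $R_j$ is diagonal, $z$ must equal $x''$, so this count is $1$ if $j=j'$ and $0$ otherwise. At the pair $(x,y)\in R_i$ the count is $1$, since $z=x$ works. Axiom (iv) then forces the count to be $1$ at $(x'',y'')$, hence $j'=j$ and $x''\in X_j$. The same argument with the triple $(i,h,i)$ gives $\mathrm{pr}_2(R_i)\subseteq X_h$.

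Combining the two inclusions yields $\mathrm{pr}_1(R_i)=X_j$ and $\mathrm{pr}_2(R_i)=X_h$. This argument also shows that $X_j=\mathrm{pr}_1(R_j)$ is the set of $x'$ with $(x',x')\in R_j$, which is consistent with the fiber definition.
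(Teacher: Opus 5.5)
The paper gives no proof of this lemma; it cites Higman, so there is no in-paper argument to compare against. Your proof is correct and complete. The triples $(j,i,i)$ and $(i,h,i)$ force $\mathrm{pr}_1(R_i)\subseteq X_j$ and $\mathrm{pr}_2(R_i)\subseteq X_h$, and the transposed-relation count at $(x,x)$ and $(y,y)$ gives the reverse inclusions.

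This is the elementwise form of the standard argument. For diagonal $A_j$ one has $A_jA_i\in\{0,A_i\}$ and $\sum_{j\in\Omega}A_j$ is the identity, which fixes the row fiber of $R_i$. The diagonal of $A_i\,{}^tA_i$, which records the out-valencies, is constant on each fiber.

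One small correction concerns your opening claim that only axiom (iv) and the partition of the diagonal are used. The triple $(i,i',j)$ with $R_{i'}={}^tR_i$ presupposes ${}^tR_i\in\mathcal R$, which is axiom (iii). You can either cite it, or avoid it by writing the out-valency of $x'$ as $\sum_{k\in I}|\{z:(x',z)\in R_i,\ (z,x')\in R_k\}|$. Each summand is constant over $(x',x')\in R_j$ by (iv) alone.
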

For $i,j\in\Omega$, define $I^{(i,j)}=\{\ell\in I\mid R_\ell\subset X_i\times X_j\}$. 
By \cref{lem:relation}
, the collection $\{I^{(i,j)}\mid i,j\in\Omega\}$ forms a partition of $I$. 
Set $r_{i,j}=|I^{(i,j)}|-\delta_{i,j}$. The matrix $\left(|I^{(i,j)}|\right)_{i,j\in\Omega}$
is called the \emph{type} of the coherent configuration $\mathcal C$. For instance, a quasi-symmetric design naturally gives rise to a coherent configuration of type $(2,2;3)$.
Let $\varepsilon_{i,j}=1-\delta_{i,j}$. Renumber the elements of $I^{(i,j)}$ as $R^{(i,j)}_{\varepsilon_{i,j}},\ldots,R^{(i,j)}_{r_{i,j}}$ where $R^{(i,i)}_0=\operatorname{diag}(X_i\times X_i)$ and $\tran{R^{(i,j)}_\ell}=R^{(j,i)}_\ell$.
Let $A^{(i,j)}_\ell$ denote the adjacency matrix of $R^{(i,j)}_\ell$, and let
\[
\mathcal A^{(i,j)}=
\operatorname{span}_{\mathbb C}
\left\{A^{(i,j)}_\ell \mid \varepsilon_{i,j}\le \ell\le r_{i,j}\right\}.
\]
Then $\mathcal A^{(i,j)}\mathcal A^{(j,h)}
\subseteq
\mathcal A^{(i,h)}$ and the intersection numbers
$p^{(i,j,h)}_{\ell,m,n}$
are defined by
\begin{equation}\label{eqn:banner}
A^{(i,j)}_\ell A^{(j,h)}_m=\sum_{n=\varepsilon_{i,h}}^{r_{i,h}} p^{(i,j,h)}_{\ell,m,n} A^{(i,h)}_n.
\end{equation}
Define $k^{(i,j)}_\ell=p^{(i,j,i)}_{\ell,\ell,0}$. 
Then $k^{(i,j)}_\ell=\left| \left\{y\in X_j\mid (x,y)\in R^{(i,j)}_\ell\right\} \right|$ for any $x\in X_i$. 
We call $k^{(i,j)}_\ell$ the valency of $R^{(i,j)}_\ell$.

Let $\widetilde r_{i,j}=r_{i,j}-\varepsilon_{i,j}$.
In this paper, we consider coherent configurations $\mathcal C$ such that each fiber $\mathcal C^i=(X_i,I^{(i,i)})$is a symmetric association scheme for every $i\in\Omega$, and such that $\mathcal A$ admits a basis
\[
\left\{E_h^{(i,j)}\mid i,j\in\Omega,\, 0\le h\le\widetilde r_{i,j}\right\}
\]
satisfying the following conditions.
\begin{enumerate}
\item[(B1)] For any $i,j\in\Omega$, $E^{(i,j)}_0
=\frac{1}{\sqrt{|X_i||X_j|}} J_{|X_i|,|X_j|}$, 
where $J_{p,q}$ denotes the $p\times q$ all-ones matrix.

\item[(B2)] For any $i,j\in\Omega$, $\left\{E_h^{(i,j)}\mid 0\le h\le\widetilde r_{i,j}\right\}$ is a basis of $\mathcal A^{(i,j)}$.

\item[(B3)] For any $i,j\in\Omega$ and
$\ell\in\{0,1,\ldots,\widetilde r_{i,j}\}$, $\tran{E^{(i,j)}_\ell}=E^{(j,i)}_\ell$.
\item[(B4)] For any
$i,j,i',j'\in\Omega$,
$\ell\in\{0,\ldots,\widetilde r_{i,j}\}$,
and
$\ell'\in\{0,\ldots,\widetilde r_{i',j'}\}$,
\begin{equation}\label{eqn:maze}
 E^{(i,j)}_\ell E^{(i',j')}_{\ell'} =\delta_{j,i'}\delta_{\ell,\ell'} E^{(i,j')}_\ell.   
\end{equation}

\end{enumerate}

Set $m_h^{(i,j)}=\rank{E_h^{(i,j)}}$.
Since $\mathcal A^{(i,j)}$ is closed under the Hadamard product $\circ$, the Krein parameters
$q^{(i,j)}_{\ell,m,n}$
are defined by
\[
E^{(i,j)}_\ell\circ E^{(i,j)}_m=\frac{1}{\sqrt{|X_i||X_j|}}\sum_{n=0}^{\widetilde r_{i,j}}q^{(i,j)}_{\ell,m,n}
E^{(i,j)}_n.
\]
For $i,j\in\Omega$, since $\left\{A_\ell^{(i,j)}\mid \varepsilon_{i,j}\leq \ell\leq r_{i,j}\right\}$ and $\left\{E_h^{(i,j)}\mid 0\leq h\leq \widetilde{r}_{i,j}\right\}$ are bases of $\mathcal{A}^{(i,j)}$, there exist change-of-bases matrices  $P^{(i,j)}=(P_{\ell}^{(i,j)}(h))_{\substack{0\leq h \leq \widetilde{r}_{i,j} \\ \varepsilon_{i,j}\leq \ell \leq r_{i,j} }}$ and
$Q^{(i,j)}=(Q_h^{(i,j)}(\ell))_{\substack{\varepsilon_{i,j}\leq \ell\leq r_{i,j}\\ 0\leq h\leq \widetilde{r}_{i,j}}}$ such that  
\begin{align*}
(A_{\varepsilon_{i,j}}^{(i,j)},\ldots,A_{r_{i,j}}^{(i,j)})&=(E_0^{(i,j)},\ldots,E_{\widetilde{r}_{i,j}}^{(i,j)})P^{(i,j)}, \\
(E_0^{(i,j)},\ldots,E_{\widetilde{r}_{i,j}}^{(i,j)})&=\frac{1}{\sqrt{|X_i||X_j|}}(A_{\varepsilon_{i,j}}^{(i,j)},\ldots,A_{r_{i,j}}^{(i,j)})Q^{(i,j)}, 
\end{align*}
equivalently, 
\begin{equation}\label{eqn:election}
A_\ell^{(i,j)}=\sum_{h=0}^{\widetilde{r}_{i,j}} P_\ell^{(i,j)}(h)E_h^{(i,j)},\quad E_h^{(i,j)}=\frac{1}{\sqrt{|X_i| |X_j|}}\sum_{\ell=\varepsilon_{i,j}}^{r_{i,j}} Q_h^{(i,j)}(\ell)A_\ell^{(i,j)}.
\end{equation}

\begin{proposition}[{\cite{Suda-2022}}]\label{prop:heart}
The following relations hold:
\begin{enumerate}
 \item \label{item:prop(1)}  $P_{\ell}^{(i, j)}(0)=\sqrt{\frac{\left|X_i\right|}{\left|X_j\right|}} k_{\ell}^{(i, j)}$,
\item \label{item:prop(2)} $Q_0^{(i, j)}(\ell)=1$, 
\item \label{item:prop(3)} $\frac{Q_h^{(i, j)}(\ell)}{\sqrt{\left|X_j\right|} m_h^{(i, j)}}=\frac{P_{\ell}^{(i, j)}(h)}{\sqrt{\left|X_i\right|} k_{\ell}^{(i, j)}}$,
\item $P^{(i,j)}Q^{(i,j)}=\sqrt{|X_i||X_j|} I$.
\end{enumerate}
\end{proposition}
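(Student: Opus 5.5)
The plan is to work with the matrix entries, using only (B1)–(B4) and the definitions in \eqref{eqn:election}. Items (i), (ii) and (iv) are quick consequences of (B1) and (B4); item (iii) needs a trace computation based on the orthogonality of the $E^{(i,j)}_h$.

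For (iv), substitute the second identity of \eqref{eqn:election} into the first. Then $(A^{(i,j)}_\ell)_\ell=(A^{(i,j)}_\ell)_\ell\,\frac{1}{\sqrt{|X_i||X_j|}}Q^{(i,j)}P^{(i,j)}$, and since the $A^{(i,j)}_\ell$ are linearly independent (their supports are disjoint) we get $Q^{(i,j)}P^{(i,j)}=\sqrt{|X_i||X_j|}I$. Both matrices are square of size $r_{i,j}+\delta_{i,j}$, so the reverse product $P^{(i,j)}Q^{(i,j)}$ is also $\sqrt{|X_i||X_j|}I$.

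For (ii), observe that $\sum_\ell A^{(i,j)}_\ell=J_{|X_i|,|X_j|}=\sqrt{|X_i||X_j|}\,E^{(i,j)}_0$ by (B1). Comparing with $E^{(i,j)}_0=\frac{1}{\sqrt{|X_i||X_j|}}\sum_\ell Q^{(i,j)}_0(\ell)A^{(i,j)}_\ell$ gives $Q^{(i,j)}_0(\ell)=1$. For (i), multiply $A^{(i,j)}_\ell=\sum_h P^{(i,j)}_\ell(h)E^{(i,j)}_h$ on the right by $E^{(j,j)}_0$. By (B4) this gives $A^{(i,j)}_\ell E^{(j,j)}_0=P^{(i,j)}_\ell(0)E^{(i,j)}_0$. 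The left-hand side equals $\frac{1}{|X_j|}A^{(i,j)}_\ell J_{|X_j|}=\frac{k^{(i,j)}_\ell}{|X_j|}J_{|X_i|,|X_j|}$, because every row of $A^{(i,j)}_\ell$ has sum $k^{(i,j)}_\ell$. Hence $P^{(i,j)}_\ell(0)=\frac{k^{(i,j)}_\ell}{|X_j|}\sqrt{|X_i||X_j|}=\sqrt{|X_i|/|X_j|}\,k^{(i,j)}_\ell$.

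Item (iii) is the main step. I would compute $\tr\bigl(A^{(i,j)}_\ell\,\tran{E^{(i,j)}_h}\bigr)=\tr\bigl(A^{(i,j)}_\ell E^{(j,i)}_h\bigr)$, using (B3), in two ways.
\begin{itemize}
\item Expanding $A^{(i,j)}_\ell$ and applying (B4) gives $P^{(i,j)}_\ell(h)\tr(E^{(i,i)}_h)=P^{(i,j)}_\ell(h)\,m^{(i,i)}_h$. Here $E^{(i,i)}_h$ is idempotent, so its trace equals its rank.
\item Expanding $E^{(i,j)}_h$ via $Q^{(i,j)}$ and using that the $A^{(i,j)}_\ell$ are $0/1$ matrices with disjoint supports gives $\frac{1}{\sqrt{|X_i||X_j|}}Q^{(i,j)}_h(\ell)\,|X_i|k^{(i,j)}_\ell$.
\end{itemize}

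The obstacle is matching the rank normalization, since the statement uses $m^{(i,j)}_h$ and not $m^{(i,i)}_h$. By (B4), $E^{(i,j)}_h E^{(j,i)}_h=E^{(i,i)}_h$ and $E^{(j,i)}_hE^{(i,j)}_h=E^{(j,j)}_h$. Therefore $\rank E^{(i,i)}_h\le\rank E^{(i,j)}_h$, and also $E^{(i,j)}_h=E^{(i,i)}_hE^{(i,j)}_h$ gives the reverse inequality, so all three ranks coincide. Equating the two trace expressions then gives $P^{(i,j)}_\ell(h)m^{(i,j)}_h=\sqrt{|X_i|/|X_j|}\,k^{(i,j)}_\ell Q^{(i,j)}_h(\ell)$. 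Rearranging yields exactly (iii).
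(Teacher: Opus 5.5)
Your proof is correct and self-contained: (i), (ii) and (iv) follow from (B1), (B4) and the linear independence of the $A^{(i,j)}_\ell$, and (iii) follows from evaluating $\tr\bigl(A^{(i,j)}_\ell E^{(j,i)}_h\bigr)$ in two ways, combined with the rank identity $m^{(i,j)}_h=m^{(i,i)}_h$ obtained from $E^{(i,j)}_hE^{(j,i)}_h=E^{(i,i)}_h$ and $E^{(i,i)}_hE^{(i,j)}_h=E^{(i,j)}_h$. The paper gives no proof of this proposition and simply quotes it from \cite{Suda-2022}; your argument is the standard trace-computation proof from association schemes, adapted to off-diagonal blocks, and it correctly handles the one new point, namely that the rank appearing is $m^{(i,j)}_h$ rather than $m^{(i,i)}_h$.
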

Suda \cite{Suda-2022} introduced the concept of $Q$-polynomial coherent configuration.
\begin{definition}[\cite{Suda-2022}] \label{def:embark} 
A coherent configuration is said to be \emph{$Q$-polynomial} if, for every
$i,j\in\Omega$, there exists a set of polynomials $\left\{v_h^{(i, j)}(x) \mid 0 \leq h \leq \widetilde{r}_{i, j}\right\}$ 
such that
\[
\sqrt{|X_i||X_j|}\,E_h^{(i,j)}=v_h^{(i,j)} \left(\sqrt{|X_i||X_j|} E_1^{(i,j)}\right),
\]
where the polynomial is evaluated under the Hadamard product, and $\deg v_h^{(i,j)}=h.$
\end{definition}

\subsection{Gegenbauer polynomials}
Gegenbauer polynomials $\left\{G_k(u)\right\}_{k=0}^{\infty}$ are defined by the following recurrence relation:
\[\begin{aligned}
& G_0(u)=1, \quad G_1(u)=d u, \\
& \frac{k+1}{d+2 k} G_{k+1}(u)=u G_k(u)-\frac{d+k-3}{d+2 k-4} G_{k-1}(u) .
\end{aligned}\]
If $d=2$, then $G_k(u)=2 T_k(u)$ for $k\geq 1$, where $T_k(u)$ is the Chebyshev polynomial of the first kind.
It is known that $\cos k \theta=T_k(\cos \theta)$
and the following composition property holds 
\begin{equation}\label{eqn:trainer}
T_{k_1}(T_{k_2}(u))=T_{k_2}(T_{k_1}(u))=T_{k_1k_2}(u).
\end{equation}
Then we have the following result.
\begin{lemma}\label{lem:curtain}
Define polynomials $f_\ell(u)$ of degree $\ell$  by the following recurrence  relation:
\begin{equation}\label{eqn:expansion}
f_0(u)=1, \quad f_1(u)=u,\quad  f_{\ell}(u)=(u+1)f_{\ell-1}(u)-f_{\ell-2}(u), 
 \quad \ell \geq 2.    
\end{equation}
Then $\frac{\cos(2 \ell-1)h \theta}{\cos h \theta}=f_{\ell-1}\left(\frac{\cos 3h \theta}{\cos h \theta}\right)$ for any fixed $\theta$.
\end{lemma}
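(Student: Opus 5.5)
Set $\phi = h\theta$, so the claim reads $\cos((2\ell-1)\phi)/\cos\phi = f_{\ell-1}\!\left(\cos 3\phi/\cos\phi\right)$ for every $\ell\ge 1$. Put $g_m = \cos((2m+1)\phi)/\cos\phi$ for $m\ge 0$; the statement is then $g_m = f_m(g_1)$. I would prove this by strong induction on $m$, using that $g_m$ satisfies the same three-term recurrence \eqref{eqn:expansion} as $f_m$, with $u=g_1$. Throughout I assume $\cos\phi\neq 0$, which is implicit in the statement. At points with $\cos\phi = 0$ one can argue by continuity in $\phi$, or simply regard the identity as a polynomial identity in $\cos\phi$ once both sides are shown to be polynomials.

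\textbf{Base cases.} For $m=0$ we have $g_0 = \cos\phi/\cos\phi = 1 = f_0$. For $m=1$ we have $g_1 = f_1(g_1)$ trivially, since $f_1(u)=u$.

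\textbf{Recurrence.} Use the sum-to-product identity
\[
\cos((2m+1)\phi)+\cos((2m-3)\phi) = 2\cos(2\phi)\cos((2m-1)\phi).
\]
Dividing by $\cos\phi$ gives $g_m = 2\cos(2\phi)\, g_{m-1} - g_{m-2}$. It remains to show $2\cos 2\phi = g_1 + 1$. Since $\cos 3\phi = 4\cos^3\phi - 3\cos\phi$, we get
\[
g_1 = 4\cos^2\phi - 3 = 2(2\cos^2\phi - 1) - 1 = 2\cos 2\phi - 1.
\]
Hence $g_m = (g_1+1)g_{m-1} - g_{m-2}$ for $m\ge 2$. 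This is exactly the recurrence defining $f_m(u)$, evaluated at $u=g_1$. With the base cases, induction gives $g_m = f_m(g_1)$ for all $m$; taking $m=\ell-1$ yields the lemma.

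The only step needing care is identifying the recurrence coefficient $2\cos 2\phi$ with $g_1+1$. This follows from the triple-angle formula, which can equally be written via $T_3$, using $\cos k\theta = T_k(\cos\theta)$. The remaining steps are routine. As a by-product, $g_m$ is a polynomial in $\cos^2\phi$, which handles the degenerate case $\cos\phi=0$ if needed.
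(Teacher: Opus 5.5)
Your proof is correct and takes essentially the same route as the paper. Both rest on the relation $\cos((2m+1)\phi)=2\cos 2\phi\,\cos((2m-1)\phi)-\cos((2m-3)\phi)$ together with $2\cos 2\phi=4\cos^2\phi-2=u+1$; the paper gets the relation by applying the Chebyshev recurrence twice, you get it by sum-to-product, and your explicit base cases and induction make rigorous what the paper leaves implicit.
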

\begin{proof}
Let $y=\cos h \theta$.
By \cref{eqn:trainer}, we have $\frac{\cos (2 \ell-1) h \theta}{\cos h \theta}=\frac{T_{2\ell-1}(y)}{y}$
and  $\frac{\cos 3 h \theta}{\cos h \theta}=\frac{T_3(y)}{y}=4 y^2-3$.
Denote $u=\frac{\cos 3 h \theta}{\cos h \theta}$, i.e. $u=4 y^2-3$.
Note that $\frac{T_{2\ell-1}(y)}{y}$ is a polynomial and an even function as well. 
Therefore, there exist polynomials $g_\ell(z)$ and $f_\ell(z)$ of degree $\ell$ such that
\[\frac{T_{2 \ell-1}(y)}{y}=g_{\ell-1}\left(y^2\right)=f_{\ell-1}\left(4 y^2-3\right).\]
Using the recurrence relation of Chebyshhev polynomial $T_n(y) =2 y T_{n-1}(y)-T_{n-2}(y)$, we obtain
\[\begin{aligned}
T_{2\ell+1}(y)  =2 y T_{2\ell}(y)-T_{2\ell-1}(y)  &=2 y \cdot\left(2 y \cdot T_{2\ell-1}(y)-T_{2\ell-2}(y)\right)-T_{2\ell-1}(y) \\
&=\left(4 y^2-2\right) T_{2\ell-1}(y)-T_{2\ell-3}(y) .
\end{aligned}\]
Then $\frac{T_{2\ell+1}(y)}{y}=\left(4 y^2-2\right) \frac{T_{2\ell-1}(y)}{y}-\frac{T_{2\ell-3}(y)}{y}$.
 Recall that $u=4 y^2-3$.
It follows from $\frac{T_{2\ell-1}(y)}{y}=f_{\ell-1}\left(4 y^2-3\right)$ that $f_{\ell}(u)=(u+1)f_{\ell-1}(u)-f_{\ell-2}(u)$. 
\end{proof}
\subsection{Hahn and dual Hahn polynomials}
The shifted factorial (also called the Pochhammer symbol) is defined by
\[
(a)_0:=1,\qquad (a)_k:=\prod_{i=1}^{k}(a+i-1), \qquad k=1,2,3,\ldots.
\]
The hypergeometric function ${}_rF_s$ is defined by the series
\[
{}_rF_s\!\left(
\begin{matrix}
a_1,\ldots,a_r\\
b_1,\ldots,b_s
\end{matrix}
;z
\right)
:=
\sum_{k=0}^{\infty}
\frac{(a_1,\ldots,a_r)_k}
{(b_1,\ldots,b_s)_k}
\frac{z^k}{k!},
\]
where $(a_1,\ldots,a_r)_k:=(a_1)_k\cdots(a_r)_k$.
For positive integers $n,i,j$ with $n-i\geq j$,
 define
\begin{equation}\label{eqn:anniversary}
Q_k(u;-(n-i)-1,-i-1,j)={}_3 F_2\Big(
\begin{array}{cc}
-k,-u,-n+k-1\\
-n+i,-j
\end{array};1\Big).    
\end{equation}
to be the {\it{Hahn polynomial}} of degree $k$ with respect to $u$.
The {\it{dual Hahn polynomial}} is defined by 
\[R_k(\lambda(u);-(n-i)-1,-i-1,j)={}_3 F_2\Big(
\begin{array}{cc}
-u,-k,-n+u-1\\
-n+i,-j
\end{array};1\Big),\]
where $\lambda(u)=u(u-n-1)$.
The relation between the Hahn polynomial and the dual Hahn polynomial is given by the following identity:
\begin{equation}\label{eqn:stress}
Q_k(u;-(n-i)-1,-i-1,j)=R_u(\lambda(k);-(n-i)-1,-i-1,j)
\end{equation}


\subsection{$q$-Hahn and dual $q$-Hahn polynomials}
For $q\neq0$ and $q\neq1$, define a $q$-analogue of the Pochhammer symbol $(a)_k$:
\[(a;q)_0:=1,\quad
(a;q)_k:=\prod_{i=1}^{k}(1-aq^{\,i-1}),
\qquad
k=1,2,3,\ldots.\]
For nonnegative integer $n$, the $q$-binomial coefficient is defined by
\[{n \brack k}_q:=\frac{(q;q)_n}{(q;q)_k(q;q)_{n-k}},
\qquad
k=0,1,\ldots,n.
\]
In the following, we denote ${n \brack k}_q$ as ${n \brack k}$ for simplicity.
The $q$-hypergeometric function ${}_r\phi_s$ is defined by the series
\[{}_r\phi_s\!\left(
\begin{matrix}
a_1,\ldots,a_r\\
b_1,\ldots,b_s
\end{matrix}
;q,z
\right)
:=
\sum_{k=0}^{\infty}
\frac{(a_1,\ldots,a_r;q)_k}
{(b_1,\ldots,b_s;q)_k}
(-1)^{(1+s-r)k}
q^{(1+s-r)\binom{k}{2}}
\frac{z^k}{(q;q)_k},\]
where $(a_1,\ldots,a_r;q)_k:=(a_1;q)_k\cdots(a_r;q)_k.$
The {\it{$q$-Hahn polynomial}} is defined by 
\[Q_k(u;q^{-(n-i)-1},q^{-i-1},j|q)=  {}_3 \phi_2\Big(
\begin{array}{cc}
q^{-u},q^{-k},q^{-n+k-1}\\
q^{-n+i},q^{-j}
\end{array};q,q\Big).\]
The {\it{dual $q$-Hahn polynomial}} is defined by 
\[R_k(\mu(u);q^{-(n-i)-1},q^{-i-1},j|q)=  {}_3 \phi_2\Big(
\begin{array}{cc}
q^{-k},q^{-u},q^{-n+u-1}\\
q^{-n+i},q^{-j}
\end{array};q,q\Big),\]
where $\mu(u)=q^{-u}+q^{-n+u-1}$. 
It is clear that 
\[Q_k(u;q^{-(n-i)-1},q^{-i-1},j|q)=R_u(\mu(k);q^{-(n-i)-1},q^{-i-1},j|q).\]

\section{$P$-polynomial coherent configuration}\label{sect:pray}
In this section, we recall Lato's definition of a $P$-polynomial coherent configuration and introduce our new definition.  
We prove that every two-fibre coherent configuration that is $P$-polynomial in Lato's sense is also $P$-polynomial in our sense, while the converse
does not hold in general.
More specifically, every coherent configuration type $(2,2;3),$ $(3,2;3)$ or $(3,3;3)$ is automatically $P$-polynomial in our sense,\textbf{} whereas the specific one arising from \cite[Example~5.6]{BBTZ-2022} does not satisfy
Lato's definition.  
\subsection{Lato's definition}

Lato~\cite{Lato-2025} introduced the notion of a $P$-polynomial
coherent configuration and proved that such a configuration has at
most two fibers. 
Moreover, its adjacency algebra is associated with either a
distance-regular graph or a distance-biregular graph.
For $1\leq i\leq p$, set $r_i=\sum_{j=1}^p r_{i,j}.$
For each fixed $i$, let
\[
\sigma_i:
\left\{
(j,\ell):
1\leq j\leq p,\ 
\varepsilon_{i,j}\leq\ell\leq r_{i,j}
\right\}
\longrightarrow
\{0,1,\ldots,r_i\}
\]
be a bijection, and define
\[
M_{\sigma_i(j,\ell)}^i=A_\ell^{(i,j)}.
\]
Denote $P_\ell^{(i,j)}(h):=P_{\sigma_i(j,\ell)}^i(h)$ for the corresponding eigenvalue.
\begin{definition}[{\cite{Lato-2025}}]\label{def:formal}
A coherent configuration is said to be $P$-polynomial if, for every
$1\leq i\leq p$, there exist an ordering $\sigma_i$ as above and
polynomials $\left\{v_k^i(x)\right\}_{k=0}^{r_i}$ with $\deg v_k^i=k$ such that
\[
P_\ell^{(i,j)}(h)
=
v_{\sigma_i(j,\ell)}^i\left(P_1^i(h)\right)
\]
for all $1\leq j\leq p$,
$\varepsilon_{i,j}\leq\ell\leq r_{i,j}$, and
$0\leq h\leq \widetilde r_{i,i}$.
\end{definition}
Let $\mathcal C$ be a $P$-polynomial coherent configuration. Then the number of fibers is at most two. 
Suppose that $\mathcal C$ has two fibers. In this case, the
generators are $M_1^1=A_1^{(1,2)}$ and $M_1^2=A_1^{(2,1)}$, and the corresponding global adjacency matrix is
\[
A_1=
\begin{pmatrix}
0&A_1^{(1,2)}\\
A_1^{(2,1)}&0
\end{pmatrix}.
\]
With the  ordering, for
$\{i,j\}=\{1,2\}$, we have
\[
M_{2\ell}^i=A_\ell^{(i,i)},
\qquad
M_{2\ell+1}^i=A_{\ell+1}^{(i,j)}.
\]
hus, the even-indexed matrices are supported on the diagonal blocks, whereas the odd-indexed matrices are supported on the off-diagonal blocks. 
Under the correspondence with the associated bipartite graph, these matrices represent the even- and odd-distance relations, respectively. 
Consequently, there exist polynomials $u_k^i$ of
degree $k$ such that
\begin{equation}\label{eqn:rebellion}
P_\ell^{(i,i)}(h) = u_{2\ell}^i\left(P_1^i(h)\right), \qquad
P_{\ell+1}^{(i,j)}(h)=u_{2\ell+1}^i\left(P_1^i(h)\right).    
\end{equation}
Since even- and odd-indexed relation matrices are supported on the diagonal and off-diagonal blocks, respectively, we may choose the polynomials as follows
\begin{equation}\label{eqn:spell}
u_{2\ell}^i(x)=f_\ell(x^2),
    \qquad
u_{2\ell+1}^i(x)=x g_\ell(x^2),    
\end{equation}
for some polynomials with $\deg f_\ell=\deg g_\ell=\ell.$
In \cite{Lato-2025}, it was shown that $P$-polynomial coherent configurations with two fibers are equivalently characterized as distance-biregular graphs.
\begin{definition}[{\cite{Lato-2025}}]\label{def:volunteer}
 Let $G=(X_1\cup X_2,E)$ be a connected bipartite graph with bipartition $(X_1, X_2)$, and for
$u \in V(G)$ let $\Gamma_i(u)=\{v\in V(G)\mid \partial(u,v)=i\}$.
The graph $G$ is called \emph{distance-biregular} if, for every $u\in V(G)$ and every $v \in\Gamma_i(u)$, the numbers
\[
 c_i(u)=|\Gamma(v)\cap\Gamma_{i-1}(u)|,
 \qquad
 b_i(u)=|\Gamma(v)\cap\Gamma_{i+1}(u)|
\]
are independent of the choice of $v$, and depend on $u$ only through
whether $u\in X_1$ or $u\in X_2$.   
\end{definition}
\subsection{Our definition}

For an association scheme, the primitive idempotents determine
normalized spherical functions, each normalized by its value at the
identity relation $A_0$, namely, $P_0(h)=1$.
An off-diagonal block of a coherent configuration does not contain an identity relation. 
To obtain an analogous normalization, we choose the
distinguished relation $A_{\varepsilon_{i,j}}^{(i,j)}$ as a
reference and normalize the eigenvalues in each row of the block
eigenmatrix $P^{(i,j)}$ so that its first column is the all-ones
vector.
\begin{definition}\label{def:responsible}
For each \(i,j\in\Omega\), suppose  $P_{\varepsilon_{i,j}}^{(i,j)}(h) \neq 0 \  (0\leq h\leq\widetilde r_{i,j})$,
and define
\[
\overline{P_\ell^{(i,j)}}(h)
=
\frac{P_\ell^{(i,j)}(h)}
     {P_{\varepsilon_{i,j}}^{(i,j)}(h)}.
\]
A coherent configuration $\mathcal C=(X,\mathcal R)$ is said to be $P$-polynomial if, for every \(i,j\in\Omega\), there
exist an ordering of $\left\{
A_\ell^{(i,j)}
\right\}_{\varepsilon_{i,j}\leq\ell\leq r_{i,j}}$ and polynomials $\left\{v_\ell^{(i,j)}(x)\right\}_{\varepsilon_{i,j}\leq\ell\leq r_{i,j}}$ with 
$\deg v_\ell^{(i,j)}=\ell-\varepsilon_{i,j}$ such that
\[
\overline{P_\ell^{(i,j)}}(h)
=
v_\ell^{(i,j)}
\left(
\overline{P_{1+\varepsilon_{i,j}}^{(i,j)}}(h)
\right)
\]
for every admissible $0 \leq h \leq \widetilde{r}_{i,j}$.
\end{definition}

\begin{remark}
The normalization in \cref{def:responsible} is analogous to the standard normalization of zonal spherical functions in the theory of Gelfand pairs, where each spherical function is normalized by its value at the identity element.
\end{remark}

\subsection{Comparison of the two definitions in the two-fibre case}

In the one-fiber case, both definitions reduce
to the usual $P$-polynomial property for association schemes and
are therefore equivalent. 
For the two-fiber case, we prove that \cref{def:responsible} is strictly weaker than \cref{def:formal}.

\begin{proposition}\label{prop:remedy}
Every two-fiber coherent configuration that is $P$-polynomial in the sense of \cref{def:formal} is also $P$-polynomial in the sense of \cref{def:responsible}.
\end{proposition}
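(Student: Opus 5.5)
The plan is to use the parity structure \eqref{eqn:rebellion}--\eqref{eqn:spell} of Lato's polynomials. In each block, the generator required by \cref{def:responsible} is an affine function of $x^2$, where $x$ denotes the value of Lato's generator at the relevant eigenspace. Every other normalized eigenvalue is a polynomial in $x^2$ of the right degree, so inverting this affine map gives \cref{def:responsible}. Throughout, fix $\{i,j\}=\{1,2\}$ and the ordering from Lato's definition, so that $M^i_{2\ell}=A^{(i,i)}_\ell$ and $M^i_{2\ell+1}=A^{(i,j)}_{\ell+1}$. We keep the same ordering of each block for \cref{def:responsible}.

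\emph{Diagonal blocks.} Here $\varepsilon_{i,i}=0$ and $A^{(i,i)}_0$ is the identity on $X_i$, so $P^{(i,i)}_0(h)=1$ and $\overline{P^{(i,i)}_\ell}(h)=P^{(i,i)}_\ell(h)$; the normalization is vacuous. For each $h$, let $x_h=P^i_1(h)$ be the value of Lato's generator on the eigenspace indexed by $h$. By \eqref{eqn:rebellion} and \eqref{eqn:spell}, $P^{(i,i)}_\ell(h)=f_\ell(x_h^2)$ with $\deg f_\ell=\ell$. In particular $P^{(i,i)}_1(h)=f_1(x_h^2)=a x_h^2+b$ with $a\neq 0$, so $x_h^2=(P^{(i,i)}_1(h)-b)/a$. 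Then
\[
v^{(i,i)}_\ell(y)=f_\ell\bigl((y-b)/a\bigr)
\]
has degree exactly $\ell$, because $f_\ell$ has a nonzero leading coefficient. It also satisfies $P^{(i,i)}_\ell(h)=v^{(i,i)}_\ell\bigl(P^{(i,i)}_1(h)\bigr)$ for all $h$.

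\emph{Off-diagonal blocks.} Here $\varepsilon_{i,j}=1$, and the reference relation is $A^{(i,j)}_1=M^i_1$. By \eqref{eqn:rebellion} and \eqref{eqn:spell}, the value of $A^{(i,j)}_{\ell+1}$ on $E^{(i,j)}_h$ is $x_h g_\ell(x_h^2)$, where now $x_h=P^{(i,j)}_1(h)$. Since $u^i_1$ has degree one and $u^i_1(x)=xg_0(x^2)$, the constant $c=g_0$ is nonzero. The hypothesis of \cref{def:responsible} gives $x_h\neq 0$, so
\[
\overline{P^{(i,j)}_{\ell+1}}(h)=g_\ell(x_h^2)/c.
\]
The generator is therefore $\overline{P^{(i,j)}_2}(h)=g_1(x_h^2)/c=\alpha x_h^2+\beta$ with $\alpha\neq 0$. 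Inverting this affine relation, $v^{(i,j)}_{\ell+1}(y)=c^{-1}g_\ell\bigl((y-\beta)/\alpha\bigr)$ has degree $\ell=(\ell+1)-\varepsilon_{i,j}$, as required. If $r_{i,j}=1$ there is nothing to prove.

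\emph{Main obstacle.} The step needing care is the bookkeeping that matches Lato's block-row eigenvalues $P^i_{\sigma_i(j,\ell)}(h)$, indexed by $0\le h\le\widetilde r_{i,i}$, with the block eigenvalues $P^{(i,j)}_\ell(h)$, indexed by $0\le h\le \widetilde r_{i,j}$. One must check that every $E^{(i,j)}_h$ comes from one of the $E^{(i,i)}_{h'}$ through (B4). This holds because $E^{(i,j)}_h=E^{(i,i)}_hE^{(i,j)}_h$ and $\widetilde r_{i,j}\le \widetilde r_{i,i}$. As a result, the value of $A^{(i,j)}_{\ell}$ on $E^{(i,j)}_h$ is exactly Lato's $P^{(i,j)}_\ell(h)$, with the same variable $x_h$ in both formulas. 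Once this identification is fixed, the remaining content is that the leading coefficients of $f_1$ and $g_1$ are nonzero, which follows from $\deg f_1=\deg g_1=1$.
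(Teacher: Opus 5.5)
Your overall route is the paper's: use the parity forms $P^{(i,i)}_\ell(h)=f_\ell(x_h^2)$ and $P^{(i,j)}_{\ell+1}(h)=x_h g_\ell(x_h^2)$, then invert the affine relation between $x_h^2$ and the degree-one generator in each block. One step does not hold as written, however. In the off-diagonal block you say that the hypothesis of \cref{def:responsible} gives $x_h\neq 0$. The condition $P^{(i,j)}_{\varepsilon_{i,j}}(h)\neq 0$ for $0\le h\le \widetilde r_{i,j}$ is not given to you. It is part of what must be verified for the configuration to be $P$-polynomial in the sense of \cref{def:responsible}, since the normalized eigenvalues have to be well-defined. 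The proposition claims this for \emph{every} two-fiber configuration that is $P$-polynomial in Lato's sense. As written, your argument only covers those Lato-$P$-polynomial configurations that already satisfy the nonvanishing condition, so this is a genuine gap, though a small one.

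The missing argument is short and uses the odd-parity structure. Suppose $x_h=P^{(i,j)}_1(h)=0$ for some $0\le h\le\widetilde r_{i,j}$. Then $P^{(i,j)}_{\ell+1}(h)=x_h g_\ell(x_h^2)=0$ for every $\ell$, so the whole $h$-th row of $P^{(i,j)}$ vanishes. This contradicts $P^{(i,j)}Q^{(i,j)}=\sqrt{|X_i||X_j|}\,I$, i.e., the fact that $P^{(i,j)}$ is a change-of-basis matrix. This is the one point in the proof that needs an argument. The index bookkeeping you single out as the main obstacle is immediate from the notation $P^{(i,j)}_\ell(h):=P^i_{\sigma_i(j,\ell)}(h)$. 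On the diagonal blocks nonvanishing is automatic, since $P^{(i,i)}_0(h)=1$. With this added, your proof agrees with the paper's. Your constant $c=g_0$ is harmless, and in fact $c=1$, because $P^{(i,j)}_1(h)=x_h g_0$ with $x_h=P^{(i,j)}_1(h)$ and $x_0=P^{(i,j)}_1(0)>0$.
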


\begin{proof}
Fix $\{i,j\}=\{1,2\}$, and set $x_h=P_1^{(i,j)}(h)$.
By \cref{eqn:rebellion,eqn:spell}, there exist polynomials $f_\ell^i$ and $g_\ell^i$ of degree $\ell$, such that
\begin{equation}\label{eqn:sleep}
P_\ell^{(i,i)}(h)=f_\ell^i(x_h^2),
\qquad
P_{\ell+1}^{(i,j)}(h)
=
x_hg_\ell^i(x_h^2).    
\end{equation}
We claim that $x_h \neq 0$ for every $h$. Otherwise, \cref{eqn:sleep} implies $P_{\ell+1}^{(i,j)}(h)=0$ for all $\ell$.
Then the $h$-th row of $P^{(i,j)}$ is zero, which contradicts the orthogonality relation 
\[
P^{(i,j)}Q^{(i,j)}
=
\sqrt{|X_i||X_j|}\,I.
\]
Hence the normalizations in \cref{def:responsible} are well-defined. \\
For the diagonal block, \cref{eqn:sleep} gives 
$\overline{P_\ell^{(i,i)}}(h)=f_\ell^i(x_h^2).$
Since $f_1^i$ has degree one, $x_h^2$ is a polynomial of degree one in $\overline{P_1^{(i,i)}}(h)=f_1^i(x_h^2)$.
Then there exists a polynomial $v_\ell^{(i,i)}$ of
degree $\ell$ such that
\[
\overline{P_\ell^{(i,i)}}(h)
=
v_\ell^{(i,i)}
\left(
\overline{P_1^{(i,i)}}(h)
\right).
\]
For the off-diagonal block, \cref{eqn:sleep} gives $\overline{P_{\ell+1}^{(i,j)}}(h)
=
\frac{P_{\ell+1}^{(i,j)}(h)}
     {P_1^{(i,j)}(h)}
=
g_\ell^i(x_h^2)$.
Similarly, $x_h^2$ is a polynomial of degree one in $\overline{P_2^{(i,j)}}(h)=g_1^i(x_h^2)$.
Therefore, there exists a polynomial $v_{\ell+1}^{(i,j)}$ of degree $\ell$ such that
\[
\overline{P_{\ell+1}^{(i,j)}}(h)
=
v_{\ell+1}^{(i,j)}
\left(
\overline{P_2^{(i,j)}}(h)
\right).
\]
Hence the coherent configuration satisfies \cref{def:responsible}.
\end{proof}

Next, we consider coherent configurations of small type. We show that those of types $(2,2;3)$, $(3,2;3)$, and $(3,3;3)$ are automatically $P$-polynomial in the sense of \cref{def:responsible}, and conclude with a
counterexample of type $(3,2;3)$ that is not $P$-polynomial in the sense of \cref{def:formal}.

\begin{proposition}\label{prop:wreck}
Every coherent configuration of type $(2,2;3)$, $(3,2;3)$, or $(3,3;3)$ is $P$-polynomial in the sense of
\cref{def:responsible}.
\end{proposition}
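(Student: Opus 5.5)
The plan is to treat each block $(i,j)$ separately, since \cref{def:responsible} imposes no compatibility between blocks. In all three types every block $\mathcal A^{(i,j)}$ contains at most three relations, so $\widetilde r_{i,j}\le 2$. After normalization, \cref{def:responsible} asks for three things: the reference relation $A^{(i,j)}_{\varepsilon_{i,j}}$ has nonvanishing eigenvalues (then its normalized column is $1$, i.e.\ $v$ of degree $0$); the generator is $v(x)=x$; and, if there is a third relation, its normalized eigenvalues are given by a polynomial of degree exactly $2$ in those of the generator. Since there are $\widetilde r_{i,j}+1\le 3$ values of $h$, this is a Lagrange interpolation problem. It is solvable with the right degree as soon as the following holds. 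Let $N^{(i,j)}$ be the normalized eigenmatrix obtained from $P^{(i,j)}$ by dividing row $h$ by $P^{(i,j)}_{\varepsilon_{i,j}}(h)$. We need the generator column of $N^{(i,j)}$ to have pairwise distinct entries, and $N^{(i,j)}$ to be invertible. Invertibility follows from $P^{(i,j)}Q^{(i,j)}=\sqrt{|X_i||X_j|}I$ (\cref{prop:heart}), since row scaling preserves it. Invertibility also forces the interpolating polynomial to have full degree: a lower-degree interpolant would make the last column a combination of the columns $1$ and $x$.

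For the diagonal blocks, $\mathcal C^i$ is a symmetric association scheme with at most two classes and $\varepsilon_{i,i}=0$, so $P^{(i,i)}_0(h)=1$. I would invoke the classical fact that every symmetric scheme of class at most $2$ is $P$-polynomial for a suitable ordering. For a strongly regular graph, take $A_1$ to be the connected one of the graph and its complement; if both are disconnected the scheme is imprimitive, but one of them is still connected. This settles the diagonal blocks, since in the one-fiber case \cref{def:responsible} is the usual notion.

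For an off-diagonal block, two facts are needed. Row $0$ of $P^{(i,j)}$ is strictly positive by \cref{prop:heart}\ref{item:prop(1)}. Moreover $\sum_\ell A^{(i,j)}_\ell=J=\sqrt{|X_i||X_j|}E^{(i,j)}_0$, so every row $h\ge 1$ of $P^{(i,j)}$ sums to $0$. With two relations, $P^{(i,j)}$ is an invertible $2\times2$ matrix. Its row $1$ is nonzero, so I choose as reference a column with nonzero entry there. The other normalized column then has distinct entries, since otherwise the two columns of $N^{(i,j)}$ would be proportional. With three relations, the zero-sum condition shows that each of rows $1,2$ has at most one zero entry. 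A row with two zeros would vanish entirely, contradicting invertibility. Hence at most two columns contain a zero, and some column $c$ is zero-free; it serves as the reference.

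The main obstacle is choosing the generator among the remaining two columns $a,b$ so that its normalized entries are distinct. If both columns have a repeated value, the repeats must occur on different pairs of rows; equal pairs would make two rows of $N^{(i,j)}$ coincide. I would rule this out using the fact that the rows of $N^{(i,j)}$ still sum to $0$ for $h\ge1$ and to a positive number for $h=0$. If the repeated pairs are, say, rows $\{0,1\}$ in column $a$ and rows $\{0,2\}$ in column $b$, these sign conditions give linear relations among the entries. First I would try to show these relations contradict invertibility, or the column orthogonality $Q^{(i,j)}P^{(i,j)}=\sqrt{|X_i||X_j|}I$ rewritten through \cref{prop:heart}\ref{item:prop(3)} in terms of $k_\ell$ and $m_h$. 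If no contradiction appears, I would instead switch the reference to another zero-free column and re-run the argument, exploiting the freedom in ordering.
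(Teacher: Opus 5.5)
There is a genuine gap in the three-relation off-diagonal block (type $(3,3;3)$). The rest is sound:
\begin{itemize}
\item the diagonal blocks are handled correctly (minor point: a graph and its complement are never both disconnected, so just say one of them is connected);
\item the two-relation blocks are handled correctly;
\item your row-sum argument for a zero-free reference column is a valid and slightly simpler substitute for the paper's orthogonality argument;
\item your invertibility argument for $\deg v_3^{(i,j)}=2$ is correct.
\end{itemize}

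The gap is the step you flag as the main obstacle: you never show that one of the two remaining normalized columns has pairwise distinct entries, and the tools you try first cannot show it. Discard the cases where some column repeats on rows $\{1,2\}$: the zero row sums then force the same repeat in the other column, so two rows of $N^{(i,j)}$ coincide. The surviving configuration is then
\[
f_a=(a_0,a_0,-1-b_0),\qquad f_b=(b_0,-1-a_0,b_0).
\]
This is consistent with every row-sum condition and with invertibility. Subtracting row $0$ gives $\det N^{(i,j)}=-(1+a_0+b_0)^2$, which is nonzero precisely because the row-$0$ sum is positive. Your fallback of switching to another zero-free reference column is not available in general either. Rows $1$ and $2$ may each have their single zero in different columns, which leaves only one zero-free column.

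The missing idea is the weighted orthogonality of the normalized columns, which the paper uses. From $Q^{(i,j)}P^{(i,j)}=\sqrt{|X_i||X_j|}\,I$ and \cref{prop:heart} one gets $\sum_h m_h^{(i,j)}P^{(i,j)}_\ell(h)P^{(i,j)}_{\ell'}(h)=|X_i|k^{(i,j)}_\ell\delta_{\ell,\ell'}$. Hence the normalized columns $f_\ell=P^{(i,j)}_\ell/P^{(i,j)}_c$ are mutually orthogonal with respect to the positive weights $w_h=m_h^{(i,j)}\bigl(P^{(i,j)}_c(h)\bigr)^2$, and $f_c=1$.

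Now suppose $f_a(0)=f_a(1)\neq f_a(2)$. Subtract $f_a(0)$ times $\sum_h w_hf_b(h)=0$ from $\sum_h w_hf_a(h)f_b(h)=0$. This gives $f_b(2)=0$, and then $w_0f_b(0)+w_1f_b(1)=0$. Since $f_b\neq 0$, its values are $0$ and two nonzero numbers of opposite sign, so they are pairwise distinct. In your surviving configuration this input even yields $f_b=0$, a contradiction. Without this positivity and orthogonality input, the $(3,3;3)$ case is not proved.
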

\begin{proof}
For each of these types, every diagonal block is a symmetric
association scheme of class one or two, and hence is
$P$-polynomial. It remains to consider the off-diagonal blocks.
\smallskip

\noindent
\textbf{Case I: Types $(2,2;3)$ and $(3,2;3)$.}\\
For $i\ne j$, the block $\mathcal{A}^{(i,j)}$ has two adjacency
matrices, say $A_1^{(i,j)}$ and $A_2^{(i,j)}$. After interchanging
them if necessary, we may assume that
\[
 P_1^{(i,j)}(h)\ne0,
 \qquad h=0,1.
\]
Indeed, $P_\ell^{(i,j)}(0)>0$ for both $\ell=1,2$, and if both
entries in the second row of $P^{(i,j)}$ were zero, then
$P^{(i,j)}$ would be singular. Thus the normalization is
well-defined.
Since $P^{(i,j)}$ is nonsingular, so is
$\overline{P^{(i,j)}}$. 
As its first column is the all-ones vector, its second column is nonconstant. 
We  take $v_1^{(i,j)}(x)=1$ and $v_2^{(i,j)}(x)=x$ with  $\deg v_\ell^{(i,j)}=\ell-1$ so that
\[
 \overline{P_{\ell}^{(i,j)}}(h)
 =
 v_{\ell}^{(i,j)}
 \left(\overline{P_{2}^{(i,j)}}(h)\right),
 \qquad \ell=1,2,\quad h=0,1.
\]
Hence every off-diagonal block is $P$-polynomial.

\smallskip
\noindent
{\bf{Case II: Type $(3,3;3)$}}.\\
For $i\ne j$, it follows from type $(3,3;3)$ that its first eigenmatrix is
\[
P^{(i,j)}
=
\left(P_\ell^{(i,j)}(h)\right)_{
\substack{0\leq h\leq2\\1\leq\ell\leq3}}.
\]
The orthogonality relation gives
\begin{equation}\label{eqn:ladder}
\sum_{h=0}^{2}
m_h^{(i,j)}
P_\ell^{(i,j)}(h)P_{\ell'}^{(i,j)}(h)
=
|X_i|k_\ell^{(i,j)}\delta_{\ell,\ell'},
\end{equation}
with
\begin{equation}\label{eqn:heel}
P_\ell^{(i,j)}(0)
=
\sqrt{\frac{|X_i|}{|X_j|}}\,k_\ell^{(i,j)}>0.
\end{equation}
{\bf{Step 1:}} We show that $P^{(i,j)}$ has a column with no zero entries.
Otherwise, by \cref{eqn:heel}, every column would have a zero in row $1$ or $2$. 
Two columns cannot have zeros in different rows, since their inner product in \cref{eqn:ladder} would be $m_0^{(i,j)}
P_\ell^{(i,j)}(0)P_{\ell'}^{(i,j)}(0)>0.$
Thus, every column would have a zero entry in the same row, contradicting the nonsingularity of $P^{(i,j)}$. 
After relabelling, we may assume that
\[
P_1^{(i,j)}(h) \ne 0,\qquad h=0,1,2.
\]
Define
\[
f_\ell(h)
=
\overline{P_\ell^{(i,j)}}(h)
=
\frac{P_\ell^{(i,j)}(h)}{P_1^{(i,j)}(h)},
\qquad \ell=1,2,3.
\]
Then $f_1=1$, and \cref{eqn:ladder} shows that
$f_1,f_2,f_3$ are orthogonal with respect to the positive weights $w_h=m_h^{(i,j)}
\bigl(P_1^{(i,j)}(h)\bigr)^2.$\\
\noindent
{\bf{Step 2:}} We show that one of $f_2$ and $f_3$ has pairwise distinct values at $h=0,1,2$.
If $f_2$ does not, since it is not constant, then we may relabel so that $f_2(0)=f_2(1)=a\ne b=f_2(2).$
The orthogonality of $f_3$ to $f_1$ and $f_2$  gives
\[
\sum_{h=0}^{2}w_hf_3(h)=0,
\qquad
a\bigl(w_0f_3(0)+w_1f_3(1)\bigr)
+bw_2f_3(2)=0.
\]
These equations imply $f_3(2)=0$ and
$w_0f_3(0)+w_1f_3(1)=0.$
Since $f_3$ is nonzero and the weights are positive, its three
values are pairwise distinct. Thus, after interchanging $f_2$
and $f_3$ if necessary, we may assume that
$f_2(0),f_2(1),f_2(2)$ are mutually distinct.\\
\noindent
{\bf{Step 3:}} By Lagrange interpolation, there exists a polynomial
$v_3^{(i,j)}$ of degree at most $2$ satisfying
\[
f_3(h)=v_3^{(i,j)}\bigl(f_2(h)\bigr),
\qquad h=0,1,2.
\]
Moreover, the degree cannot be $1$. 
Otherwise $f_3\in\operatorname{span}\{f_1,f_2\}$, this contradicts the orthogonality. 
Hence $\deg v_3^{(i,j)}=2$. 
Taking $v_1^{(i,j)}(x)=1$ and $v_2^{(i,j)}(x)=x$,
 gives the polynomial relations required in
\cref{def:responsible}.
\end{proof}

Tight relative $2e$-designs in $H(n,2)$ supported on two shells were studied in \cite{BBTZ-2022}.
We immediately obtain the following result. 
\begin{corollary}\label{cor:related}
The coherent configuration induced on a tight relative $4$-design in $H(n,2)$ supported on two shells is $P$-polynomial in the sense of \cref{def:responsible}.
\end{corollary}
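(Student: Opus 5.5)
The plan is to reduce the statement to \cref{prop:wreck}: I will show that the coherent configuration coming from such a design has one of the three types covered there. Throughout I use the structural results of \cite{BBTZ-2022} on tight relative $2e$-designs on two shells of $H(n,2)$.

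First, identify the coherent configuration. A relative $4$-design corresponds to $e=2$. Let $Y=Y_1\sqcup Y_2$ be supported on two shells, and recall from \cite{BBTZ-2022} that, under their mild condition, it carries a coherent configuration whose fibers are $Y_1$ and $Y_2$. The relations inside $Y_i\times Y_j$ are given by the Hamming distances occurring between a point of $Y_i$ and a point of $Y_j$.

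Second, count these distances. Tightness of a relative $2e$-design on two shells bounds the number of distinct distances. By the degree/annihilator-polynomial argument in \cite{BBTZ-2022}, there are at most $e=2$ nonzero distances within each $Y_i$ and at most $e$ (or $e+1$) distances between $Y_1$ and $Y_2$. This gives block sizes $|I^{(i,i)}|\le 3$ and $|I^{(1,2)}|\le 3$. I also need each block to have at least the minimal size: within a fiber, $|I^{(i,i)}|\ge 2$ unless the fiber is a single point, and $|I^{(1,2)}|\ge 2$ because the design property forces more than one cross distance.

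Third, list the possible types. Up to swapping the two fibers, the type is one of $(2,2;3)$, $(3,2;3)$, $(3,3;3)$. If a degenerate case occurs, such as a cross block with a single relation, I check directly that \cref{def:responsible} holds trivially, since a block with one relation needs only $v=1$. I then invoke \cref{prop:wreck}.

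The main obstacle is the second step: extracting from \cite{BBTZ-2022} the exact number of distances for $e=2$, in particular that the cross block has at most three relations rather than more. For this I would look for the explicit classification of the $e=2$ case there, which I recall exhibits the type directly; the paper's phrase ``we immediately obtain'' suggests exactly this.
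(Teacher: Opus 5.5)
Your proposal is correct and follows the same argument as the paper. There the corollary is read off from \cref{prop:wreck}, using the fact from \cite{BBTZ-2022} that the coherent configuration induced by a tight relative $2e$-design on two shells has type $(e+1,e;e+1)$, hence type $(3,2;3)$ when $e=2$ (compare \cref{ex:decorative}); quoting that type directly makes your hedged distance bounds and the degenerate-case checks unnecessary.
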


\begin{remark}
In \cite{BBTZ-2022}, it was shown that a tight relative $2e$-design in $H(n,2)$ supported on two shells is $Q$-polynomial.
However, \cref{cor:related} is specific to $e=2$. 
For $e>2$, the induced coherent configuration has type $(e+1,e;e+1)$. 
Its diagonal blocks are association schemes with $e$
classes, which are not automatically $P$-polynomial.
\end{remark}

We conclude this section with a tight relative $4$-design whose induced coherent configuration of type $(3,2;3)$ does not satisfy \cref{def:formal}.

\begin{example}\label{ex:decorative}
Let $(Y,\mathbf{1})$ be the tight relative $4$-design in
$H(22,2)$ obtained from the Witt $4$-$(23,7,1)$ design, where $Y=Y_6\sqcup Y_7.$
Then the coherent configuration induced on $Y$ is not
$P$-polynomial in the sense of \cref{def:formal}.
\end{example}

\begin{proof}
This design arises from the Witt $4$-$(23,7,1)$ design, where
$Y_6$ and $Y_7$ are its derived and residual designs,
respectively. The two relations on $Y_6\times Y_7$ are determined by $|x\cap y|=1$ and $|x\cap y|=3$, where $x\in Y_6$ and $y\in Y_7$. For each $t\in\{1,3\}$, let
$G_t$ be the bipartite graph with bipartition $Y_6\sqcup Y_7$
defined by
\[
x\sim_{G_t}y
\quad\Longleftrightarrow\quad
|x\cap y|=t.
\]
Fix $x\in Y_6$. For $x'\in Y_6\setminus\{x\}$, we have
$|x\cap x'|\in\{0,2\}$. A direct calculation gives
\[
|N_{G_1}(x)\cap N_{G_1}(x')|
=
\begin{cases}
36 & \text{ if } |x\cap x'|=0,\\
56 & \text{ if } |x\cap x'|=2,
\end{cases}
\]
and
\[
|N_{G_3}(x)\cap N_{G_3}(x')|
=
\begin{cases}
20 & \text{ if } |x\cap x'|=0,\\
40 & \text{ if } |x\cap x'|=2.
\end{cases}
\]
Since all four common-neighbor numbers above are positive, every $x'\in Y_6\setminus\{x\}$ lies at distance two from $x$ in both $G_1$ and $G_3$. 
However, the number of common neighbors of $x$
and $x'$ depends on whether $|x\cap x'|=0$ or $2$. 
Hence neither $G_1$ nor $G_3$ is distance-biregular by \cref{def:volunteer}. 
It follows that the coherent configuration induced on $Y$ is not $P$-polynomial in the sense of \cref{def:formal}.
\end{proof}
\begin{remark}
\cref{def:formal} is based on the global adjacency matrix $A_1$, whereas \cref{def:responsible} is imposed separately on each block. 
The normalization, generator, and ordering may vary from one block to another, so the blockwise polynomial relations need not arise from a common global generating matrix. 
By \cref{prop:wreck}, the coherent configuration in \cref{ex:decorative} is $P$-polynomial in the sense of \cref{def:responsible}, but not in the sense of \cref{def:formal}.
Accordingly, in the next section, we focus on the tridiagonal structure of the block-wise intersection matrices, which provides a local analogue of the classical three-term recurrence.
\end{remark}

Coherent configurations of type $(3,2;3)$, $(3,3;3)$ are known as strongly regular designs in \cite{HIGMAN1988411}, strongly regular designs of the second kind \cite{MR1345694}, respectively. 
We present examples from coding theory. 
\begin{example}
    The dual of the extended ternary Golay code of length $12$ is a $5$-design with degree $3$ in $H(12,3)$. Its derived designs, say $X_1,X_2,X_3$, are $4$-designs in $H(11,2)$ with $s_{ij}=2$ for any $i,j\in\{1,2\}$, see \cite[Proposition~3.2]{gavrilyuk2025extremal}. Then by \cite[Theorem~5,5]{Suda-2022} and Proposition~\ref{prop:wreck}, the union of $X_1 \bigcup X_2 \bigcup X_3$ with the binary relations determined by the Hamming distances is a $P$- and $Q$-polynomial coherent configuration of type $(3,2,2;3,2;3)$. 
    Deleting one of any fiber yields a $P$- and $Q$-polynomial coherent configuration of type $(3,2;3)$. 
\end{example}

\begin{example}
    The doubly shortened code of the extended Golay code of length $22$ is a $5$-design with degree $3$ in $H(22,2)$. Its derived designs, say $X_1,X_2$, are $4$-designs in $H(21,2)$ with $s_{ii}=2$ for any $i\in\{1,2\}$ and $s_{ij}=3$ for any distinct $i,j\in\{1,2\}$, see \cite[Proposition~3.2]{gavrilyuk2025extremal}. Then by \cite[Theorem~5,5]{Suda-2022} and Proposition~\ref{prop:wreck}, the union of $X_1 \bigcup X_2$ with the binary relations determined by the Hamming distances is a $P$- and $Q$-polynomial coherent configuration of type $(3,3;3)$. 
\end{example}

\section{Tridiagonality of cross-block intersection matrices} \label{sect:scandal}
For distinct $i,j\in\Omega$, define the cross-block intersection matrix by
\[
B_1^{(i,i,j)}
=
\left(p_{1,\ell,\ell'}^{(i,i,j)}\right)_{\varepsilon_{i,j}\leq \ell,\ell' \leq r_{i,j}}.
\]
In this section, we will give an equivalent condition for the tridiagonality of $B_1^{(i,i,j)}$.
\begin{proposition}\label{prop:lump}
Let $\mathcal C$ be a $P$-polynomial coherent configuration in the sense of \cref{def:responsible}. 
Fix distinct $i,j\in\Omega$ such that $\widetilde r_{i,j}\geq1$. 
Then the followings are equivalent:
\begin{enumerate}
\item For every $0\leq h\leq\widetilde r_{i,j}$, there exist constants
$\alpha_{i,j},\beta_{i,j}\in\mathbb R$, with
$\alpha_{i,j}\neq0$, such that
\[
\overline{P_1^{(i,i)}}(h)
=
\alpha_{i,j}\overline{P_2^{(i,j)}}(h)
+\beta_{i,j}.
\]
\item The cross-block intersection matrix $B_1^{(i,i,j)}$ is tridiagonal.
\end{enumerate}
Moreover, if these conditions hold, then $\alpha_{i,j}
=
p_{1,1,2}^{(i,i,j)}$ and $\beta_{i,j}
=
p_{1,1,1}^{(i,i,j)}$.
\end{proposition}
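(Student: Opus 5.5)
The plan is to turn the block relations of the adjacency algebra into an eigenvalue equation for $B_1^{(i,i,j)}$, and then compare polynomial degrees. For brevity write $r=r_{i,j}$ (so $\varepsilon_{i,j}=1$ and $\widetilde r_{i,j}=r-1\ge 1$), $\lambda_h=P_1^{(i,i)}(h)$, $f_m(h)=\overline{P_m^{(i,j)}}(h)$ and $x_h=f_2(h)$. Because $P_0^{(i,i)}(h)=1$, we have $\overline{P_1^{(i,i)}}=\lambda$.

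\textbf{Step 1: eigenvector identity.} Substitute the expansions \cref{eqn:election} into \cref{eqn:banner} with $(i,j,h)\to(i,i,j)$. By \cref{eqn:maze}, $E^{(i,i)}_{h}E^{(i,j)}_{h'}=\delta_{h,h'}E^{(i,j)}_h$. Since the $E^{(i,j)}_h$ are linearly independent, comparing coefficients gives
\[
\lambda_h P_m^{(i,j)}(h)=\sum_{n=1}^{r}p^{(i,i,j)}_{1,m,n}P_n^{(i,j)}(h),\qquad 0\le h\le r-1 .
\]
Dividing by $P_1^{(i,j)}(h)\neq0$ yields $\lambda_h f_m(h)=\sum_n p^{(i,i,j)}_{1,m,n}f_n(h)$. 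So for each $h$, the vector $(f_m(h))_m$ is an eigenvector of $B_1^{(i,i,j)}$ with eigenvalue $\lambda_h$.

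The matrix $\overline{P^{(i,j)}}=(f_m(h))$ is nonsingular, because $P^{(i,j)}$ is. Hence the coefficients $p^{(i,i,j)}_{1,m,n}$ are the \emph{unique} coefficients expressing the function $\lambda f_m$ on $\{0,\dots,r-1\}$ in the basis $f_1,\dots,f_r$.

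By \cref{def:responsible}, $f_m=v_m(x_h)$ with $\deg v_m=m-1$. The $r\times r$ matrix $(v_m(x_h))$ is therefore Vandermonde-like, and its nonsingularity forces the $x_h$ to be pairwise distinct. Consequently every function on the $r$ points is a unique combination of $v_1(x),\dots,v_r(x)$.

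\textbf{Step 2: (i)$\Rightarrow$(ii).} If $\lambda_h=\alpha x_h+\beta$, then $\lambda v_m(x)$ is a polynomial of degree $m$. For $m\le r-1$ it is a combination of $v_1,\dots,v_{m+1}$ as polynomials. By the uniqueness in Step 1, this gives $p^{(i,i,j)}_{1,m,n}=0$ for $n>m+1$; for $m=r$ there is nothing to check.

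For the lower band I will use the double-counting identity
\[
k^{(i,j)}_n\,p^{(i,i,j)}_{1,m,n}=k^{(i,j)}_m\,p^{(i,i,j)}_{1,n,m}.
\]
It follows by counting triples $(x,z,y)$ with $(x,z)\in R^{(i,i)}_1$, $(z,y)\in R^{(i,j)}_m$ and $(x,y)\in R^{(i,j)}_n$, using the symmetry of $R^{(i,i)}_1$. This gives $p_{1,m,n}=0$ also for $n<m-1$, so $B_1^{(i,i,j)}$ is tridiagonal.

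The $m=1$ row reads $\lambda_h=p_{1,1,1}+p_{1,1,2}x_h$. Comparing with (i) via uniqueness gives $\alpha_{i,j}=p^{(i,i,j)}_{1,1,2}$ and $\beta_{i,j}=p^{(i,i,j)}_{1,1,1}$, which is the final assertion.

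\textbf{Step 3: (ii)$\Rightarrow$(i), the main obstacle.} Tridiagonality makes the $m=1$ row of Step 1 read $\lambda_h=p_{1,1,1}+p_{1,1,2}\,x_h$, which has the required form. The real work is proving $\alpha=p^{(i,i,j)}_{1,1,2}\neq0$.

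Suppose instead $p_{1,1,2}=0$. Then $\lambda_h$ is constant on the $r$ points, and $B_1^{(i,i,j)}$ has a basis of eigenvectors with a common eigenvalue, so it is scalar. Evaluating at $h=0$ shows the scalar is $\lambda_0=k^{(i,i)}_1$. Thus $A^{(i,i)}_1A^{(i,j)}_m=k^{(i,i)}_1A^{(i,j)}_m$ for every $m$.

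Since all entries are $0/1$, this means: whenever $(x,y)\in R^{(i,j)}_m$, every $R^{(i,i)}_1$-neighbour $x'$ of $x$ also satisfies $(x',y)\in R^{(i,j)}_m$. Now the fiber $\mathcal C^i$ is $P$-polynomial with $A^{(i,i)}_1$ as generator, so the graph $(X_i,R^{(i,i)}_1)$ is connected, being the distance-$1$ graph of a distance-regular graph. Hence for each $y\in X_j$ all $x\in X_i$ lie in one relation with $y$. That forces each $A^{(i,j)}_m$ to be $0$ or a block of $J$, contradicting $r\ge2$.

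I expect this nondegeneracy argument to need the most care. In particular, I must justify connectivity from the fiber's $P$-polynomiality, including the degenerate one-class case, where $R^{(i,i)}_1$ is complete and hence connected.
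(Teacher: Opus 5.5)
Your proof is correct, but it takes a different route from the paper's in both directions.

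For (i)$\Rightarrow$(ii), the paper substitutes $P_\ell^{(i,j)}(h)=P_1^{(i,j)}(h)\,v_\ell^{(i,j)}(\theta_h)$ into the orthogonality relation $\sum_h m_h^{(i,j)}P_\ell^{(i,j)}(h)P_{\ell'}^{(i,j)}(h)=|X_i|k_\ell^{(i,j)}\delta_{\ell,\ell'}$. It then treats the $v_\ell^{(i,j)}$ as orthogonal polynomials for the positive weights $m_h^{(i,j)}\bigl(P_1^{(i,j)}(h)\bigr)^2$ and gets both bands at once from the three-term recurrence. You get the upper band from a degree count plus uniqueness of expansion in $f_1,\dots,f_r$. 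You get the lower band from the double-counting identity $k^{(i,j)}_n p^{(i,i,j)}_{1,m,n}=k^{(i,j)}_m p^{(i,i,j)}_{1,n,m}$, which is the combinatorial counterpart of that orthogonality. Your version is self-contained and needs neither positivity of spectral weights nor the theory of orthogonal polynomials. It also makes explicit the uniqueness (distinct $x_h$ on at least two points) behind the identification $\alpha_{i,j}=p^{(i,i,j)}_{1,1,2}$, $\beta_{i,j}=p^{(i,i,j)}_{1,1,1}$, which the paper leaves implicit.

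For (ii)$\Rightarrow$(i), the paper settles $p^{(i,i,j)}_{1,1,2}\neq0$ in one line. $P$-polynomiality of the fiber forces $P_1^{(i,i)}(0),\dots,P_1^{(i,i)}(\widetilde r_{i,i})$ to be pairwise distinct, so $\lambda_h$ cannot be constant on $r\ge2$ points. That is exactly your Step~1 Vandermonde argument applied to the block $(i,i)$. Your detour through $B_1^{(i,i,j)}=k_1^{(i,i)}I$ and connectivity is therefore valid but unnecessary. If you keep it, justify connectivity by noting that $A_\ell^{(i,i)}=\sum_h v_\ell^{(i,i)}(P_1^{(i,i)}(h))E_h^{(i,i)}=v_\ell^{(i,i)}(A_1^{(i,i)})$. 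Hence the all-ones matrix on $X_i$ is a polynomial in $A_1^{(i,i)}$, which is impossible for a disconnected graph. Then finish with the fact that $A_m^{(i,j)}$ has constant positive column sums $k_m^{(j,i)}$. This forces $R_m^{(i,j)}=X_i\times X_j$ for every $m$, which is incompatible with $r\ge2$.
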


\begin{proof}
(i) $\Rightarrow$ (ii): 
For $0\leq h\leq\widetilde r_{i,j}$, set $\theta_h=\overline{P_2^{(i,j)}}(h)$.
By \cref{def:responsible}, there exist polynomials
$v_\ell^{(i,j)}$ with
$\deg v_\ell^{(i,j)}=\ell-1$, such that
\begin{equation}\label{eqn:penny}
 P_\ell^{(i,j)}(h)=P_1^{(i,j)}(h)
v_\ell^{(i,j)}(\theta_h),
\qquad
1\leq\ell\leq r_{i,j}.   
\end{equation}
Substituting \cref{eqn:penny} into the orthogonal relation \cref{eqn:ladder}, we obtain
\[
\sum_{h=0}^{\widetilde r_{i,j}}
m_h^{(i,j)}
\bigl(P_1^{(i,j)}(h)\bigr)^2
v_\ell^{(i,j)}(\theta_h)
v_{\ell'}^{(i,j)}(\theta_h)
=
|X_i|k_\ell^{(i,j)}\delta_{\ell,\ell'}.
\]
Thus $\{v_\ell^{(i,j)}\}_{\ell=1}^{r_{i,j}}$
is a system of orthogonal polynomials on the points $\{\theta_h\}_{h=0}^{\widetilde r_{i,j}}$. 
Hence multiplication by the linear polynomial $\alpha_{i,j} x+\beta_{i,j}$ is
tridiagonal in this basis. 
Equivalently, for every point $\theta_h$,
\begin{equation}\label{eqn:neighbour}
(\alpha_{i,j}\theta_h+\beta_{i,j})v_\ell^{(i,j)}(\theta_h)
=
c_\ell v_{\ell-1}^{(i,j)}(\theta_h)
+a_\ell v_\ell^{(i,j)}(\theta_h)
+b_\ell v_{\ell+1}^{(i,j)}(\theta_h).    
\end{equation}
Since $\overline{P_1^{(i,i)}}(h)=P_1^{(i,i)}(h)$ and $\overline{P_1^{(i,i)}}(h)=\alpha_{i,j}\overline{P_2^{(i,j)}}(h)+\beta_{i,j}$,
\cref{eqn:penny,eqn:neighbour} imply
\begin{equation}\label{eqn:enhance}
P_1^{(i,i)}(h)P_\ell^{(i,j)}(h)=
P_1^{(i,j)}(h)
\bigl( c_\ell v_{\ell-1}^{(i,j)}(\theta_h)+a_\ell v_\ell^{(i,j)}(\theta_h)+b_\ell v_{\ell+1}^{(i,j)}(\theta_h)
\bigr).    
\end{equation}
On the other hand, from \cref{eqn:banner}, we obtain
\begin{equation} \label{eqn:future}
P_1^{(i,i)}(h)P_\ell^{(i,j)}(h)=P_1^{(i,j)}(h) \sum_{\ell'=1}^{r_{i,j}} p_{1,\ell,\ell'}^{(i,i,j)} v_{\ell'}^{(i,j)}(\theta_h).    
\end{equation}
Since $P_1^{(i,j)}(h)\neq0$ for every admissible $h$, comparison of \cref{eqn:enhance,eqn:future}, together with the
linear independence implied by the orthogonality, yields
\[
p_{1,\ell,\ell'}^{(i,i,j)}=0
\qquad\text{whenever}\qquad
|\ell-\ell'|>1.
\]
Therefore $B_1^{(i,i,j)}$ is tridiagonal.\\
(ii) $\Rightarrow$ (i): Suppose that $B_1^{(i,i,j)}$ is tridiagonal, then we have
\begin{equation}\label{eqn:volcano}
A_1^{(i,i)}A_1^{(i,j)}=p_{1,1,1}^{(i,i,j)}A_1^{(i,j)}+p_{1,1,2}^{(i,i,j)}A_2^{(i,j)}.    
\end{equation}
Comparing the coefficients of $E_h^{(i,j)}$ on both sides of \cref{eqn:volcano}, we obtain
\[
P_1^{(i,i)}(h)P_1^{(i,j)}(h)=p_{1,1,1}^{(i,i,j)}P_1^{(i,j)}(h)+p_{1,1,2}^{(i,i,j)}P_2^{(i,j)}(h).
\]
Dividing by $P_1^{(i,j)}(h)\neq0$ gives
\begin{equation}\label{eqn:wilderness}
 \overline{P_1^{(i,i)}}(h)=p_{1,1,2}^{(i,i,j)}
\overline{P_2^{(i,j)}}(h)+p_{1,1,1}^{(i,i,j)}.   
\end{equation}
It remains to show that
$p_{1,1,2}^{(i,i,j)}\neq0$.
Otherwise, \cref{eqn:wilderness} would imply that $P_1^{(i,i)}(h)$ is constant for $0\leq h\leq\widetilde r_{i,j}$.
However, the $P$-polynomial property of the diagonal block imply that the eigenvalues $P_1^{(i,i)}(0),\ldots,
P_1^{(i,i)}(\widetilde r_{i,i})$ are mutually distinct.
This is a contradiction. 
Therefore $p_{1,1,2}^{(i,i,j)}\neq0.$
The proof is complete by taking $\alpha_{i,j}=p_{1,1,2}^{(i,i,j)}$ and $\beta_{i,j}=p_{1,1,1}^{(i,i,j)}$.
\end{proof}

\section{$P$- and $Q$-polynomial coherent configurations with more than two fibers} \label{sect:revenge}

In this section, we present three families of $P$- and $Q$-polynomial coherent configurations in the sense of \cref{def:responsible} and \cref{def:embark}, each with arbitrarily many fibers. 
They arise from tight Euclidean $t$-designs in $\R^2$, the coherent configuration associated
with the Terwilliger algebra of $H(n,2)$, and the configuration of all subspaces of $\F_q^n$.

\subsection{Coherent configuration from tight Euclidean $t$-designs in $\mathbb R^2$}
Bajnok \cite{Bajnok-2006} constructed tight Euclidean $t$-designs in $\mathbb R^2$ for any $t$.
Bannai-Bannai-Hirao-Sawa \cite{BBHS-2010} prove that every tight $t$-design of $\R^2$
or a tight $t$-design on $p$ concentric spheres is similar to one of the
examples given by Bajnok if it is supported by at most
$\left\lfloor \frac{t}{4} \right\rfloor +1$ concentric spheres.
\begin{lemma}[{\cite[Theorem 9]{Bajnok-2006}}] \label{lem:project}
 Let $1 \leq p \leq\left\lfloor\frac{t+5}{4}\right\rfloor$ and $N=t+3-2 p$. Let $r_1<r_2<\cdots<r_p$ be  distinct positive real numbers. For integers $k$ and $i$, define the point
$$
b_{i, k}=r_i \cdot \left(\cos \frac{(2k +i)\pi}{N}, \sin \frac{(2k +i)\pi}{N} \right),
$$
and let
\[X_i=\left\{b_{i,k} \mid 1 \leq k \leq N\right\} \text{~ for ~} 1 \leq i \leq p.\]
Denote $X=X_1\cup X_2\cup \cdots \cup X_p$. 
Furthermore, define the weight function $w: X \rightarrow \mathbb{R}^{+}$ by
\[w\left(b_{i,k}\right)= 
\begin{cases}
\frac{1}{r_1^N} & \text { if } \quad i=1, \\ 
(-1)^i \frac{1}{r_i^N} \prod_{\substack{2\leq l\leq p\\l\neq i}} \frac{r_1^2-r_l^2}{r_i^2-r_l^2} & \text { if } \quad 2 \leq i \leq p.
\end{cases}\]
Then $(X, w)$ is a tight Euclidean $t$-design in $\mathbb R^2$. 
 \end{lemma}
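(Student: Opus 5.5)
We verify the design property through the harmonic characterization of weighted Euclidean designs, and then compare $|X|$ with the known lower bound. Recall that $(X,w)$ is a Euclidean $t$-design supported on the circles $S_{r_1},\dots,S_{r_p}$ if and only if
\[
\sum_{x\in X} w(x)\,\|x\|^{2j}\varphi(x)=0
\]
for every nonconstant homogeneous harmonic polynomial $\varphi$ of degree $k\ge 1$ and every $j\ge 0$ with $2j+k\le t$. In $\mathbb R^2$ the space of such $\varphi$ of degree $k$ is spanned by $\operatorname{Re} z^k$ and $\operatorname{Im} z^k$, where $z=x_1+\sqrt{-1}\,x_2$. So the whole verification reduces to computing the sums $\sum_{x\in X_i} z^k$ on each shell.

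\textbf{Step 1 (one shell).} For $x=b_{i,k'}$ we have
\[
z^k=r_i^k\,e^{\sqrt{-1}\,k(2k'+i)\pi/N}.
\]
Summing over $1\le k'\le N$ is a geometric sum of $N$-th roots of unity. It vanishes unless $N\mid k$. When $k=mN$ it equals $N r_i^{mN}e^{\sqrt{-1}\,m i\pi}=N r_i^{mN}(-1)^{mi}$, which is real.

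Since $p\le\lfloor (t+5)/4\rfloor$, we get $2N=2t+6-4p\ge t+1$. Hence the only multiple of $N$ in $\{1,\dots,t\}$ is $k=N$. For $k=N$ the admissible $j$ satisfy $2j\le t-N=2p-3$, so $0\le j\le p-2$. The imaginary parts vanish automatically. All other harmonic conditions hold shell by shell.

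\textbf{Step 2 (reduction to interpolation).} Write $w_i$ for the common weight on $X_i$ and set $c_i=(-1)^i w_i r_i^N$. The remaining conditions become
\[
\sum_{i=1}^p c_i\, r_i^{2j}=0,\qquad 0\le j\le p-2.
\]
From the definition of $w$, $c_1=-1$ and
\[
c_i=\prod_{l\ne 1,i}\frac{r_1^2-r_l^2}{r_i^2-r_l^2}\qquad (2\le i\le p).
\]
This is exactly the Lagrange basis polynomial for the nodes $r_2^2,\dots,r_p^2$, evaluated at $r_1^2$. The conditions therefore read
\[
g(r_1^2)=\sum_{i\ge 2} c_i\, g(r_i^2)\quad\text{for all } g \text{ with } \deg g\le p-2,
\]
which is precisely the exactness of Lagrange interpolation on $p-1$ nodes.

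\textbf{Step 3 (positivity of the weights).} For $i\ge 2$, each numerator factor $r_1^2-r_l^2$ is negative, giving $p-2$ negative factors. The denominator factors $r_i^2-r_l^2$ are negative exactly for $l>i$, giving $p-i$ negative factors. So the sign of the product is $(-1)^{2p-2-i}=(-1)^i$, and $w_i>0$.

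\textbf{Step 4 (tightness; the main obstacle).} Here we need the Fisher-type lower bound for $t$-designs on $p$ concentric circles in $\mathbb R^2$, as in Bajnok and \cite{BBHS-2010}, in the regime $p\le\lfloor (t+5)/4\rfloor$. The plan is to write the bound as $\dim \mathcal P_e(S)$ with $S=\bigcup_i S_{r_i}$, computed from the harmonic decomposition, separately for $t=2e$ and $t=2e+1$. The task is then to check that it equals $pN=p(t+3-2p)$.

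I expect this parity bookkeeping to be the only delicate point. I would first verify the identity on small cases, such as $p=1$ (regular $(t+1)$-gon) and $p=2$, and then do the general count.
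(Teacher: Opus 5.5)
The paper quotes this lemma from Bajnok without proof, so your argument has to stand on its own.

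\textbf{What is correct.} Steps 1--3 are correct and prove that $(X,w)$ is a Euclidean $t$-design with positive weights. The roots-of-unity sum is right. The inequality $2N\ge t+1$ correctly leaves only $k=N$ with $0\le j\le p-2$. Reading $c_i$ as the Lagrange basis polynomials on the nodes $r_2^2,\dots,r_p^2$, evaluated at $r_1^2$, is right, and so is the sign count $(-1)^{2p-2-i}=(-1)^i$.

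\textbf{The gap: tightness.} Tightness is part of the statement, but Step 4 only announces it. Moreover, the plan as stated fails for odd $t$. If $t=2e+1$, then $\dim\mathcal P_e(S)<pN$ for every admissible $p$, so no comparison with $\dim\mathcal P_e(S)$ can show tightness. Your own test case $p=1$ already shows this: the regular $(2e+2)$-gon has one more point than $\dim\mathcal P_e(S^1)=2e+1$.

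For odd $t$ the relevant lower bound is M\"oller's, $|X|\ge 2\dim\mathcal P_e^*(S)$, which applies here since $0\notin S$. Here $\mathcal P_e^*$ is spanned by the homogeneous polynomials of degrees $e,e-2,e-4,\dots$. So ``tight'' has to mean attaining $\dim\mathcal P_e(S)$ when $t=2e$, and $2\dim\mathcal P_e^*(S)$ when $t=2e+1$, among designs supported on $p$ concentric circles.

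\textbf{How to close it.} With that settled, the count is short. On $S$, for $\varphi\in\mathrm{Harm}_k(\mathbb R^2)$ we have $\|x\|^{2i}\varphi(x)=r^{2i+k}\varphi(x/r)$. The radial factors $r^{2i}$, $0\le i\le m$, span a space of dimension $\min\{p,m+1\}$ on $p$ distinct radii. Using $\dim\mathrm{Harm}_0(\mathbb R^2)=1$ and $\dim\mathrm{Harm}_k(\mathbb R^2)=2$ for $k\ge1$, this gives
\[
\dim\mathcal P_e(S)=\sum_{k=0}^{e}\dim\mathrm{Harm}_k(\mathbb R^2)\min\{p,\lfloor (e-k)/2\rfloor+1\},\qquad
\dim\mathcal P_e^*(S)=\sum_{s=0}^{\lfloor e/2\rfloor}\dim\mathrm{Harm}_{e-2s}(\mathbb R^2)\min\{p,s+1\}.
\]
For $t=2e$, the hypothesis is equivalent to $p\le\lfloor e/2\rfloor+1$. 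The first sum then equals $p+2\bigl(p(p-1)+p(e-2p+2)\bigr)=p(2e+3-2p)=pN$.

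For $t=2e+1$, the hypothesis reads $p\le\lfloor (e+3)/2\rfloor$. The second sum equals $p(e+2-p)$; check $e$ even and $e$ odd separately, including the extreme value of $p$. Hence $2\dim\mathcal P_e^*(S)=p(2e+4-2p)=pN$.

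Only this last computation, with M\"oller's bound in place of $\dim\mathcal P_e(S)$, completes the proof for odd $t$.
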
 
For $1\leq i,j\leq p$, set
\[
 I(X_i,X_j)=
 \left\{\frac{\langle x,y\rangle}{r_i r_j}
 \mid x\in X_i,\ y\in X_j\right\}\setminus\{1\},
 \qquad s_{i,j}=|I(X_i,X_j)|,
\]
and put $\xi_{i,j}=0$ if $i\equiv j\pmod 2$ and
$\xi_{i,j}=1$ otherwise.  efine
\[
 r_{i,j}=
 \begin{cases}
 s_{i,j}+1,&i\ne j\text{ and }i\equiv j\pmod 2,\\
 s_{i,j},&\text{otherwise}.
 \end{cases}
\]
More precisely, for $\varepsilon_{i,j}\leq\ell\leq r_{i,j}$, put
\[
 \alpha_\ell^{(i,j)}
 =\cos\frac{\bigl(2(\ell-\varepsilon_{i,j})+\xi_{i,j}\bigr)\pi}{N}.
\]
For $x\in X_i$, $y\in X_j$, define the relation
$R_{\ell}^{(i,j)}$ by
\[
 (x,y)\in R_{\ell}^{(i,j)}
 \quad\text{if}\quad
 \frac{\langle x,y\rangle}{r_i r_j}=\alpha_{\ell}^{(i,j)}.
\]

 \begin{lemma}[{\cite[Theorem 2.6]{MR2677683}, \cite[Theorem 5.10]{Suda-2022}}] \label{lem:finish} 
Let $Y_i$ be a spherical $t_i$-design on $S^{d-1}$ for $i \in\{1,2, \ldots, p\}$. 
Assume that $p\geq 3$ and $Y_i \cap Y_j=\emptyset$ for distinct integers $i$ and $j$.  
If one of the following holds depending on the choice of $i, j, h \in\{1,2, \ldots, p\}$
\begin{enumerate}[label=(\roman*)]
\item $s_{i, j}+s_{j, h}-2 \leq t_j$,
\item $i=j=h$, $2 s_{i, i}-3=t_i$, and $Y_i=-Y_i$,  
\end{enumerate}
then $\left(\bigcup_{i=1}^p Y_i,\left\{R_{\ell}^{(i, j)} \mid 1 \leq i, j \leq p, \varepsilon_{i, j} \leq \ell \leq s_{i, j}\right\}\right)$ is a $Q$-polynomial coherent configuration. 
More precisely, define the primitive idempotents $E_h^{(i, j)}$ as follows
\begin{equation}
\begin{aligned}\label{eqn:dealer}
  E_h^{(i, j)}= 
  \begin{cases}
  \frac{1}{\sqrt{\left|X_i\right|\left|X_j\right|}} \sum_{\ell=\varepsilon_{i, j}}^{s_{i, j}} G_h\left(\alpha_\ell^{(i, j)}\right) A_\ell^{(i, j)} & \text { if } \quad h \leq s_{i, j}-1, \\ 
  \Delta_{X_i}-\frac{1}{\left|X_i\right|} \sum_{\ell=\varepsilon_{i, j}}^{s_{i, i}}\left(\sum_{\nu=0}^{s_{i, i}-1}  G_\nu\left(\alpha_\ell^{(i,i)}\right)\right) A_\ell^{(i, i)} & \text { if } \quad i=j \text { and } h=s_{i, i}.
  \end{cases}
\end{aligned}  
\end{equation}  
 \end{lemma}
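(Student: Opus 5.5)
The plan is to work throughout with the zonal-function (addition) formula for spherical designs. For a spherical $t_j$-design $Y_j$ on $S^{d-1}$ and integers $a,b$ with $a+b\le t_j$, the addition theorem for harmonic polynomials gives
\[
\sum_{z\in Y_j} G_a(\langle x,z\rangle)\,G_b(\langle z,y\rangle)=\delta_{a,b}\,\frac{|Y_j|}{\dim \mathrm{Harm}_a}\,G_a(\langle x,y\rangle)
\]
for all unit vectors $x,y$, where $\mathrm{Harm}_a$ is the space of harmonic polynomials of degree $a$, $\dim\mathrm{Harm}_a=G_a(1)$, and $\langle\cdot,\cdot\rangle$ is the inner product of unit vectors. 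This is the only analytic input. Everything else is bookkeeping with the inner-product values $\alpha^{(i,j)}_\ell$.

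\textbf{Step 1 (coherence).} First I would write every block matrix as a polynomial in the Gram matrix $G^{(i,j)}=(\langle x,y\rangle)_{x\in Y_i,y\in Y_j}$ with entries taken entrywise. For $i\neq j$ there are $s_{i,j}$ distinct inner products, so by Lagrange interpolation each $A^{(i,j)}_\ell$ equals $F_\ell(G^{(i,j)})$ for a polynomial $F_\ell$ of degree $\le s_{i,j}-1$. For $i=j$ the value $1$ is also present, so $A^{(i,i)}_\ell$ ($\ell\ge1$) is a polynomial of degree $\le s_{i,i}$ that vanishes at $1$. The identity is handled separately.

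Next I would expand each polynomial in the Gegenbauer basis. The $(x,y)$-entry of $A^{(i,j)}_\ell A^{(j,h)}_m$ is $\sum_{z\in Y_j}F(\langle x,z\rangle)H(\langle z,y\rangle)$. Under condition (i), the degrees satisfy $\deg F+\deg H\le s_{i,j}+s_{j,h}-2\le t_j$, so the addition formula turns this sum into a polynomial in $\langle x,y\rangle$. Hence the entry depends only on the relation containing $(x,y)$. This gives the constant intersection numbers in \cref{def:operation}. The transpose and diagonal axioms are immediate.

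\textbf{Step 2 (case (ii)).} This case needs extra care, and I expect it to be the main obstacle. Here the degrees reach $2s_{i,i}-2=t_i+1$, which exceeds the design strength by one. My first attempt would use $Y_i=-Y_i$. Since $-1$ is then an inner-product value, I would split each polynomial into even and odd parts. The sum over an antipodal set of an odd function of $z$ vanishes. So the one offending top-degree term has the parity that is killed, or else it reduces to degree $t_i$. I would check this carefully against the extra relation count $r_{i,j}=s_{i,j}+1$ in the same-parity case.

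\textbf{Step 3 (the idempotents).} Then I would verify that the matrices in \cref{eqn:dealer} satisfy (B1)--(B4). Property (B1) follows from $G_0=1$. Property (B3) follows because $G_h$ is evaluated at a symmetric quantity. For (B4), the product $E^{(i,j)}_hE^{(j,j')}_{h'}$ is, entrywise, exactly the sum in the addition formula with $a=h$, $b=h'$, and $h+h'\le s_{i,j}+s_{j,j'}-2\le t_j$. This yields $\delta_{h,h'}$ times $E^{(i,j')}_h$, once the normalisation $1/\sqrt{|X_i||X_j|}$ is matched with $\dim\mathrm{Harm}_h$. For the last idempotent $E^{(i,i)}_{s_{i,i}}$, I would show it is the identity minus the sum of the others, and that it is orthogonal to them by the same computation. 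A dimension count then gives (B2).

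\textbf{Step 4 ($Q$-polynomiality).} Finally, $\sqrt{|X_i||X_j|}\,E^{(i,j)}_1$ is the entrywise matrix $G_1(\langle x,y\rangle)=d\langle x,y\rangle$. Since $G_h$ is a polynomial of degree exactly $h$, Hadamard polynomials $v^{(i,j)}_h$ of degree $h$ in $\sqrt{|X_i||X_j|}\,E^{(i,j)}_1$ produce $\sqrt{|X_i||X_j|}\,E^{(i,j)}_h$ for $h\le s_{i,j}-1$. For the top diagonal idempotent I would use interpolation on the $s_{i,i}+1$ values, with the value at $1$ supplying the degree-$s_{i,i}$ polynomial, and so verify \cref{def:embark}.
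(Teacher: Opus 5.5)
Your route (Lagrange interpolation in the inner product, the addition formula, and the design property) is the standard Delsarte--Goethals--Seidel/Suda argument. Under condition (i) it goes through after two repairs.

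First, fix the constant in the addition formula. In the paper's normalization ($G_1(u)=du$, so $G_a(1)=\dim\mathrm{Harm}_a$) it reads $\sum_{z\in Y_j}G_a(\langle x,z\rangle)G_b(\langle z,y\rangle)=\delta_{a,b}\,|Y_j|\,G_a(\langle x,y\rangle)$ for $a+b\le t_j$. You can check $a=b=1$ using $\sum_{z\in Y_j}zz^{\top}=\frac{|Y_j|}{d}I$. Your factor $1/\dim\mathrm{Harm}_a$ belongs to the normalization $G_a(1)=1$. With the right constant, the matrices in \eqref{eqn:dealer} are idempotent exactly as written, so nothing needs to be ``matched''.

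Second, Step~1 interpolates a diagonal relation by a polynomial of degree $\le s_{i,i}$, and this breaks your degree count. For $A^{(i,i)}_\ell A^{(i,h)}_m$ you get $s_{i,i}+s_{i,h}-1$, one more than (i) allows. Instead, interpolate only on the $s_{i,i}$ nontrivial inner products and write $A^{(i,i)}_\ell=F_\ell(G^{(i,i)})-F_\ell(1)I$ with $\deg F_\ell\le s_{i,i}-1$.

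The real gap is case (ii), which enters twice. Write $s=s_{i,i}$.

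\emph{Coherence.} Your parity argument fails. After the repair above, the offending term at $(i,i,i)$ is $\langle x,z\rangle^{s-1}\langle z,y\rangle^{s-1}$, which is homogeneous of even degree $2s-2$ in $z$. Antipodality only annihilates odd functions, and a $(2s-3)$-design gives no information in degree $2s-2$. What is needed is to use the extra relation. Since $Y_i=-Y_i$, the inner product $-1$ occurs, and the adjacency matrix $A_{-1}$ of that relation is the permutation matrix of $x\mapsto -x$. Interpolate on the $s-1$ values other than $\pm1$ and write $A^{(i,i)}_\ell=F_\ell(G^{(i,i)})-F_\ell(1)I-F_\ell(-1)A_{-1}$ with $\deg F_\ell\le s-2$. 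Then products $F(G^{(i,i)})H(G^{(i,i)})$ have degree $\le 2s-4\le t_i$, and $F(G^{(i,i)})A_{-1}$ is just $F$ applied entrywise to $-G^{(i,i)}$.

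\emph{Idempotents.} Your (B4) computation in Step~3 needs $h+h'\le t_i$, which fails for $h=h'=s-1$. No argument can repair this: $\frac{1}{|Y_i|}\bigl(G_{s-1}(\langle x,y\rangle)\bigr)$ is idempotent only if $Y_i$ is a $(2s-2)$-design. Take the cube in $\mathbb{R}^3$, which has $s=3$, $t=3$ and is antipodal. The matrix $\frac18\bigl(G_2(\langle x,y\rangle)\bigr)$ has trace $G_2(1)=5$, but it maps into the $3$-dimensional space of even functions orthogonal to $\mathbf{1}$, so it is not idempotent. The last matrix in \eqref{eqn:dealer} would have trace $8-1-3-5=-1$.

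So Steps~3 and~4 establish the displayed idempotents only when (i) holds at every triple. Whenever (ii) is genuinely needed at $(i,i,i)$, you still obtain the coherent configuration, but the top two idempotents of that diagonal block are different. They are the projections onto the even and odd parts of the orthogonal complement of $\bigoplus_{\nu\le s-2}\mathrm{Harm}_\nu|_{Y_i}$, and that part of the statement fails in this case.
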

 \begin{remark}\label{rmk:criminal}
    For tight Euclidean designs in \cref{lem:project}, the following statements hold.
      \begin{enumerate}
      \item $|X_i|=N=t-2 p+3$ and $s_{i,i}=\lfloor \frac{N}{2} \rfloor$ for all $i\in \{1,2,\ldots,p\}$.
     \item  $\frac{1}{r_i} X_i \cup \frac{1}{r_j} X_j $ is exactly  $\frac{1}{r_i} X_i$ if $i \equiv j \pmod 2$ and $\frac{1}{r_i} X_i \cup \frac{1}{r_j} X_j \cong \frac{1}{r_1} X_1 \cup \frac{1}{r_2} X_2$ if $ i\not\equiv j \pmod 2$. 

 \end{enumerate}
 \end{remark}
Let $\mathcal C_1$ be the coherent configuration obtained from a tight Euclidean $t$-design in $\R^2$. 
We obtain the following result immediately from \cref{lem:finish}.
 \begin{corollary}
 Suppose that either $i=j$ or $i\not\equiv j\pmod 2$.
 For $\varepsilon_{i,j}\leq\ell\leq s_{i,j}$ and
 $0\leq h\leq s_{i,j}-\varepsilon_{i,j}$, the $(\ell,h)$-entry
 of the second eigenmatrix $Q^{(i,j)}$ of $\mathcal C_1$ is given by

 \begin{equation}\label{eqn:concrete}
  Q^{(i,j)}_{h}(\ell)=
 \begin{cases}
 G_{h}(\alpha_{\ell}^{(i,j)}) & \text { if}\quad 0\leq h \leq s_{i,j}-1, \\ 
N \cdot \delta_{\ell,0}-\sum_{\nu=0}^{s_{i,i}-1} G_\nu\left(\alpha_{\ell}^{(i,i)}\right) & \text { if}\quad   i=j \text { and } h =s_{i,i}.
\end{cases}   
 \end{equation}
 \end{corollary}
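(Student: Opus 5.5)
The corollary is read off from \cref{lem:finish}, combined with the defining relation \cref{eqn:election} between $E_h^{(i,j)}$ and $Q^{(i,j)}$. First I will check that the hypotheses of \cref{lem:finish} hold for the scaled fibers $Y_i=\frac1{r_i}X_i$.

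\textbf{Setting up the hypotheses.} By \cref{rmk:criminal}(i), each $Y_i$ is a regular $N$-gon on $S^1$, hence a spherical $(N-1)$-design, and $s_{i,i}=\lfloor N/2\rfloor$. The requirement that either $i=j$ or $i\not\equiv j \pmod 2$ is exactly the condition under which $Y_i\cap Y_j=\emptyset$ or $i=j$, by \cref{rmk:criminal}(ii). For $i\not\equiv j$, the set $Y_i\cup Y_j$ is congruent to $Y_1\cup Y_2$, which is a regular $2N$-gon. The inner products $\cos\frac{(2m+1)\pi}{N}$ then take $s_{i,j}$ values, which are listed as $\alpha^{(i,j)}_\ell$ for $\varepsilon_{i,j}\le\ell\le s_{i,j}$.

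Condition (i) of \cref{lem:finish}, namely $s_{i,j}+s_{j,h}-2\le t_j=N-1$, holds because each $s$ is at most $\lceil N/2\rceil$. In the one borderline diagonal case, condition (ii) covers it: here $N$ is even, $Y_i=-Y_i$, and $2s_{i,i}-3=N-3$. This case needs a small parity check, which I will carry out there. If $p<3$, we instead invoke \cite{Suda-2022} directly for one or two fibers.

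\textbf{Reading off the entries.} \cref{lem:finish} gives the explicit idempotents \cref{eqn:dealer}. Comparing with $E_h^{(i,j)}=\frac{1}{\sqrt{|X_i||X_j|}}\sum_\ell Q_h^{(i,j)}(\ell)A_\ell^{(i,j)}$, and using linearity together with the linear independence of the $A_\ell^{(i,j)}$, gives $Q^{(i,j)}_h(\ell)=G_h(\alpha^{(i,j)}_\ell)$ for $h\le s_{i,j}-1$.

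For the last index $h=s_{i,i}$, multiply \cref{eqn:dealer} by $\sqrt{|X_i||X_i|}=N$. Writing $\Delta_{X_i}=A_0^{(i,i)}$, this gives
\[
N E^{(i,i)}_{s_{i,i}}=\sum_{\ell}\Bigl(N\delta_{\ell,0}-\sum_{\nu=0}^{s_{i,i}-1}G_\nu(\alpha^{(i,i)}_\ell)\Bigr)A^{(i,i)}_\ell,
\]
where we use $|X_i|=N$ from \cref{rmk:criminal}(i). This is the second line of \cref{eqn:concrete}.

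\textbf{Main obstacle.} The only genuine work is the bookkeeping in the verification of the design-strength inequality for all triples $(i,j,h)$. In particular, one must confirm that the index range $0\le h\le s_{i,j}-\varepsilon_{i,j}$ matches $\widetilde r_{i,j}$. I would handle the off-diagonal case with $r_{i,j}=s_{i,j}$, so that $\widetilde r_{i,j}=s_{i,j}-1$, separately from the diagonal case, where $\widetilde r_{i,i}=s_{i,i}$.
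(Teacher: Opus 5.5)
Your reading-off step is exactly the paper's argument; the paper only says the corollary follows immediately from \cref{lem:finish}. Comparing \cref{eqn:dealer} with \cref{eqn:election}, using $\Delta_{X_i}=A_0^{(i,i)}$ and $\sqrt{|X_i||X_i|}=N$, gives both lines of \cref{eqn:concrete}, as you wrote.

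The gap is in your verification of the hypotheses of \cref{lem:finish}. That lemma assumes the whole family $Y_1,\dots,Y_p$ is pairwise disjoint. But by \cref{rmk:criminal}(ii), $Y_i=Y_j$ whenever $i\equiv j\pmod 2$. So for $p\ge 3$ (already $Y_1=Y_3$) the hypothesis fails, and the lemma cannot be applied to $\mathcal C_1$ as a whole. Noting that $Y_i\cap Y_j=\emptyset$ for the particular pairs in the corollary does not repair this, because the hypothesis is global, not blockwise.

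What is needed, and what \cref{rmk:criminal}(ii) is for, is to pass to the sub-configuration on $X_i\cup X_j$:
\begin{itemize}
\item its relations on $X_i\times X_i$ and $X_i\times X_j$ are those of $\mathcal C_1$;
\item after scaling it is $Y_i$ (if $i=j$), or it is congruent to the regular $2N$-gon $Y_1\cup Y_2$ (if $i\not\equiv j\pmod 2$);
\item the one- or two-fiber version of Suda's theorem applies there.
\end{itemize}
The diagonal idempotents are intrinsic to each fiber, and the off-diagonal ones are then determined up to sign. Hence the $(i,i)$ and $(i,j)$ blocks of a basis of $\mathcal C_1$ can be taken to be those of \cref{eqn:dealer}. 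In other words, the route you reserve for ``$p<3$'' is the one needed in every case.

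A smaller point: condition (i) of \cref{lem:finish} holds for every triple. Indeed, $s_{i,j}\le\lceil N/2\rceil$ gives $s_{i,j}+s_{j,h}-2\le 2\lceil N/2\rceil-2\le N-1=t_j$. So there is no borderline case, and your appeal to (ii) is actually wrong. For $N$ even, $2s_{i,i}-3=N-3\neq N-1=t_i$. Lowering $t_i$ to $N-3$ would break (i) for the other triples with middle index $i$.
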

\begin{remark}
Assume that $i\ne j$ and $i\equiv j\pmod 2$. Define a bijection
$\phi_{i,j}:X_i\longrightarrow X_j$ by
\[
\phi_{i,j}(b_{i,k})=b_{j,k+(i-j)/2},
\]
where the second index is interpreted modulo $N$. Then $\frac{1}{r_j}\phi_{i,j}(b_{i,k})
=
\frac{1}{r_i}b_{i,k}.$
Since $\varepsilon_{i,j}=1$, $\varepsilon_{i,i}=0$, and
$\xi_{i,j}=\xi_{i,i}=0$, we have
\[
\alpha_{\ell+1}^{(i,j)}
=
\cos\frac{2\ell\pi}{N}
=
\alpha_\ell^{(i,i)}.
\]
Consequently, the bijection $\phi_{i,j}$ identifies
$R_\ell^{(i,i)}$ with $R_{\ell+1}^{(i,j)}$. 
The associated change-of-basis matrices satisfy
\begin{equation}\label{eqn:junior}
P_{\ell+1}^{(i,j)}(h)=P_\ell^{(i,i)}(h),
\qquad
Q_h^{(i,j)}(\ell+1)=Q_h^{(i,i)}(\ell),
\qquad
0\leq \ell,h\leq s_{i,i}.
\end{equation}
\end{remark}
It follows from \cref{eqn:concrete,eqn:junior} that  
\[m_h^{(i,j)}=m_h^{(i,i)}=\tr(E_h^{(i,i)})=
\begin{cases}
G_h(1) &  \text { if } \quad  0\leq h\leq s_{i,j}-1,\\
 N-\sum_{\nu=0}^{s_{i, i}-1} G_\nu(1)&  \text { if } \quad
 i\equiv j\pmod 2 \text{ and } h= s_{i,i}.
\end{cases}
\]
Since $|X_i|=N$ for any $1\leq i\leq p$, by \cref{prop:heart}, the $(h,\ell)$-entry of the first eigenmatrix $P^{(i,j)}$ is obtained
\begin{equation}\label{eqn:version}
 P_\ell^{(i,j)}(h)=Q_h^{(i,j)}(\ell) \frac{k_\ell^{(i,j)}}{m_h^{(i,j)}}.
\end{equation}
Denote
$K=\diag(k^{(i,j)}_\ell)_{\varepsilon_{i,j}\leq \ell \leq r_{i,j}}$
and
$M=\diag(m^{(i,j)}_h)_{0\leq h \leq r_{i,j}-\varepsilon_{i,j}}$.
Then $P^{(i,j)}=M^{-1} \tran{Q^{(i,j)}} K$.

\begin{theorem} 
The coherent configuration $\mathcal C_1$ is $P$-polynomial.
\end{theorem}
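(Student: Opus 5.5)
We need to show that each block $P^{(i,j)}$, after normalization by the column of the first relation, is polynomial in the second normalized column. We split into three cases according to \cref{rmk:criminal}: (a) $i=j$; (b) $i\neq j$ with $i\equiv j\pmod 2$; (c) $i\not\equiv j\pmod 2$.

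In every case we use \cref{eqn:version} in the form
\[
\overline{P_\ell^{(i,j)}}(h)=\frac{Q_h^{(i,j)}(\ell)\,k_\ell^{(i,j)}}{Q_h^{(i,j)}(\varepsilon_{i,j})\,k_{\varepsilon_{i,j}}^{(i,j)}}.
\]
Since $m_h^{(i,j)}$ cancels, only the second eigenmatrix \cref{eqn:concrete} matters. The valencies are explicit: $k_\ell^{(i,j)}$ is $1$ or $2$ according to whether $\alpha_\ell^{(i,j)}=\pm1$ or not, because points lie on regular $N$-gons.

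\textbf{Case (a).} The fiber $X_i$ is a regular $N$-gon, i.e. the cyclic scheme symmetrized. Its eigenvalues are $P_\ell^{(i,i)}(h)=2\cos(2\ell h\pi/N)$ for $\ell\ne0,\,N/2$, and $1$ or $(-1)^h$ at the ends. This follows from \cref{eqn:concrete} together with $G_h=2T_h$ and \cref{eqn:version}.

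With $y=\cos(2h\pi/N)$ we get $P_\ell^{(i,i)}(h)=2T_\ell(y)$ by \cref{eqn:trainer}, and $P_1^{(i,i)}(h)=2y$. Hence $v_\ell(x)=2T_\ell(x/2)$ has degree $\ell$. The possible exceptional last relation (when $N$ is even, $\ell=N/2$, valency $1$) gives $T_{N/2}(y)$, which is also a polynomial of the right degree.

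\textbf{Case (b).} By \cref{eqn:junior}, $P_{\ell+1}^{(i,j)}(h)=P_\ell^{(i,i)}(h)$. So the first column is $P_1^{(i,j)}(h)=P_0^{(i,i)}(h)=1$, and normalization does nothing. The statement then reduces to case (a) after the index shift $\ell\mapsto\ell-1$, with $\deg v_{\ell+1}^{(i,j)}=\ell$.

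\textbf{Case (c).} This is the main case. Here $\varepsilon_{i,j}=1$ and $\xi_{i,j}=1$, so
\[
\alpha_\ell^{(i,j)}=\cos\frac{(2\ell-1)\pi}{N},\qquad 1\le \ell\le s_{i,j},\qquad 0\le h\le s_{i,j}-1.
\]
Using $Q_h^{(i,j)}(\ell)=G_h(\alpha_\ell^{(i,j)})=2\cos\bigl((2\ell-1)h\pi/N\bigr)$ (or $1$ at $h=0$), and inserting the valencies, I expect
\[
\overline{P_\ell^{(i,j)}}(h)=c_\ell\,\frac{\cos\bigl((2\ell-1)h\theta\bigr)}{\cos(h\theta)},\qquad \theta=\pi/N,
\]
with $c_\ell\in\{1,2,\tfrac12\}$ depending only on $\ell$. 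The factor $c_\ell$ is $\tfrac12$ for a last relation with $\alpha=-1$, which can occur when $N$ is odd and $2\ell-1=N$.

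This is exactly the setting of \cref{lem:curtain}: $\overline{P_\ell^{(i,j)}}(h)=c_\ell f_{\ell-1}(u_h)$ with $u_h=\cos 3h\theta/\cos h\theta$, and $\overline{P_2^{(i,j)}}(h)=c_2u_h$. So $v_\ell^{(i,j)}(x)=c_\ell f_{\ell-1}(x/c_2)$ has degree $\ell-1$.

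\textbf{Main obstacle.} The difficulty is verifying well-definedness: we need $P_1^{(i,j)}(h)\ne0$, i.e. $\cos(h\pi/N)\ne0$ for $0\le h\le s_{i,j}-1$. This needs $2h<N$, which should follow from $s_{i,j}\le\lceil N/2\rceil$.

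We also have to pin down the valency bookkeeping at the extreme relation, namely whether $\alpha=-1$ occurs among the cross angles, since it changes $c_\ell$. I would first compute $s_{i,j}$ for $i\not\equiv j$ from the half-angle offset: it is $\lceil N/2\rceil$, with an angle $-1$ exactly when $N$ is odd. Then I would check both parities of $N$ separately.
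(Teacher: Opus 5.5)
Your proposal is correct and follows essentially the same route as the paper: Chebyshev composition $2T_\ell(x/2)$ on the diagonal blocks (with $T_d(x/2)$ for the antipodal relation when $N$ is even), the index shift from \cref{eqn:junior} for same-parity off-diagonal blocks, and \cref{lem:curtain} for the opposite-parity blocks, with a factor $\tfrac12$ at the $\alpha=-1$ relation when $N$ is odd. You also fill in two details the paper leaves implicit: your check that $s_{i,j}=\lceil N/2\rceil$ forces $\cos(h\pi/N)>0$, so the normalization is well defined, and your rescaling by $x/c_2$, which matters only in the edge case $N=3$.
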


\begin{proof}
Set $\theta=\pi/N$, and let $\boldsymbol P^{(i,j)}_\ell$ denote the
$\ell$-th column of the normalized first eigenmatrix
$\overline{P^{(i,j)}}$. 
By \cref{rmk:criminal}, it suffices to distinguish the
cases $i\not\equiv j\pmod 2$ and $i\equiv j\pmod 2$.\\
\noindent
{\bf{Case I:}} $i\not\equiv j\pmod 2$. \\
In this case, $M=\diag(1,2,\ldots,2)$. If $N$ is even, then $K=\diag(2,\ldots,2,2)$ .
If $N$ is odd, then $K=\diag(2,\ldots,2,1)$.
Then $\varepsilon_{i,j}=1$ and $Q_h^{(i,j)}(\ell)=2\cos((2\ell-1)h\theta)$.
Moreover,
\begin{equation}\label{eqn:wait}
\overline{P^{(i,j)}}
 =
 \frac{1}{2}\Lambda^{-1}\tran{Q^{(i,j)}}K,    
\end{equation}
 where $\Lambda=\diag\left(
  1,Q_1^{(i,j)}(1),\ldots,Q_{s_{i,j}-1}^{(i,j)}(1)
 \right)$.
 Hence, for every $2\leq \ell \leq s_{i,j}-1$,
\[
 \overline{P_\ell^{(i,j)}}(h)
 =
 \frac{Q_h^{(i,j)}(\ell)}
      {Q_h^{(i,j)}(1)}
 =
 \frac{\cos((2\ell-1)h\theta)}
      {\cos(h\theta)}.
\]
By \cref{lem:curtain}, there exists a polynomial $f_{\ell-1}$ of degree
$\ell-1$ such that
\[
 \frac{\cos((2\ell-1)h\theta)}
      {\cos(h\theta)}
 =
 f_{\ell-1}
 \left(
  \frac{\cos(3h\theta)}{\cos(h\theta)}
 \right).
\]
Therefore,
\[
 \boldsymbol P_\ell^{(i,j)}
 =
 f_{\ell-1}\left(\boldsymbol P_2^{(i,j)}\right).
\]
If $N$ is odd, then $k_{s_{i,j}}=1$ and consequently
\[
 \boldsymbol P_{s_{i,j}}^{(i,j)}
 =
 \frac{1}{2}
 f_{s_{i,j}-1}
 \left(\boldsymbol P_2^{(i,j)}\right).
\]
Thus \cref{def:responsible} is satisfied in the case
$i\not\equiv j\pmod 2$.\\
\noindent
{\bf{Case II:}} $i\equiv j\pmod 2$.\\
We first consider a diagonal block $(i,i)$ and put
$d=s_{i,i}=\lfloor N/2\rfloor$.  If $N$ is even, then
$K=M=\diag(1,2,\ldots,2,1)$, whereas if $N$ is odd, then
$K=M=\diag(1,2,\ldots,2,2)$.  For $1\leq h,\ell\leq d-1$, we have
\[
 \alpha_\ell^{(i,i)}=\cos(2\ell\theta)
 \quad\text{and}\quad
 Q_h^{(i,i)}(\ell)
 =
 2T_h\left(\alpha_\ell^{(i,i)}\right)
\]
and, moreover,
\begin{equation}\label{eqn:recovery}
 \overline{P^{(i,i)}}=P^{(i,i)}
 =M^{-1}\tran{Q^{(i,i)}}K.
\end{equation}
Using $T_h(\cos(2\ell\theta))
 =
 T_\ell(\cos(2h\theta))$, for $1\leq\ell\leq d-1$, 
we obtain
\[
 \boldsymbol P_\ell^{(i,i)}
 =
 2T_\ell
 \left(
  \frac{\boldsymbol P_1^{(i,i)}}{2}
 \right).
\]
Suppose first that $N$ is even, so that $d=N/2$.  Directly from
\cref{eqn:concrete},
\[
 Q_h^{(i,i)}(d)=
 \begin{cases}
 (-1)^h,&h=0 \text{~or~} d,\\
 2(-1)^h,&1\leq h\leq d-1.\\
 \end{cases}
\]
Since $k_d^{(i,i)}=m_0^{(i,i)}=m_d^{(i,i)}=1$ and
$m_h^{(i,i)}=2$ for $1\leq h\leq d-1$, \cref{eqn:version} gives
\[
 P_d^{(i,i)}(h)=(-1)^h
 =T_d\left(\frac{P_1^{(i,i)}(h)}{2}\right).
\]
Consequently,
\[
 \boldsymbol P_d^{(i,i)}
 =
 T_d
 \left(
  \frac{\boldsymbol P_1^{(i,i)}}{2}
 \right).
\]
If $N$ is odd, then $d=(N-1)/2$ and $k_h^{(i,i)}=m_h^{(i,i)}=2$
for $1\leq h\leq d$.  Hence the same Chebyshev identity gives
\[
 \boldsymbol P_\ell^{(i,i)}
 =
 2T_\ell
 \left(
  \frac{\boldsymbol P_1^{(i,i)}}{2}
 \right),
 \qquad
 1\leq\ell\leq d.
\]
It remains to consider a same-parity off-diagonal block, namely,  $i\ne j$
and $i\equiv j\pmod 2$.  
By
\cref{eqn:junior}, the distinguished column
$\boldsymbol P_1^{(i,j)}$ is the all-ones column and
\[
 \overline{\boldsymbol P}_{\ell+1}^{(i,j)}
 =
 \boldsymbol P_\ell^{(i,i)}
 \qquad(0\leq\ell\leq d).
\]
Therefore, if $N$ is odd,
\[
 \overline{\boldsymbol P}_{\ell+1}^{(i,j)}
 =
 2T_\ell\left(
 \frac{\overline{\boldsymbol P}_2^{(i,j)}}{2}
 \right)
 \qquad(1\leq\ell\leq d),
\]
whereas if $N$ is even,
\[
 \overline{\boldsymbol P}_{\ell+1}^{(i,j)}
 =
 2T_\ell\left(
 \frac{\overline{\boldsymbol P}_2^{(i,j)}}{2}
 \right)
 \qquad(1\leq\ell\leq d-1)
\]
and
\[
 \overline{\boldsymbol P}_{d+1}^{(i,j)}
 =
 T_d\left(
 \frac{\overline{\boldsymbol P}_2^{(i,j)}}{2}
 \right).
\]
The polynomial associated with
$\overline{\boldsymbol P}_{\ell+1}^{(i,j)}$ has degree
$\ell=(\ell+1)-\varepsilon_{i,j}$.  Thus
\cref{def:responsible} is satisfied for every same-parity block.
Hence the coherent configuration is $P$-polynomial.
\end{proof}
\subsection{Coherent configuration associated to Terwilliger algebra of $H(n,2)$}
Let $n$ be a positive integer, $X=\{0,1\}^n$ and $R_i=\{(x, y) \in X \times X \mid d_H(x, y)=i\}$ for $i \in\{0,1, \ldots, n\}$ where $d_H(x, y)=\left|\left\{\ell \in\{1, \ldots, n\} \mid x_{\ell} \neq y_{\ell}\right\}\right|$ is the Hamming distance.
The pair $\left(X,\left\{R_i\right\}_{i=0}^n\right)$ is a binary Hamming association scheme denoted by $H(n,2)$. 

For $i \in\{0,1, \ldots, n\}$, denote $X_i=\{x \in X \mid d_H(x, \mathbf 0)=i\}$ where $\mathbf 0=(0, \ldots, 0)$.
For $i,j\in\{0,1,\ldots,n\}$, put $d_{i,j}=\min\{i,j,n-i,n-j\}.$
For $\varepsilon_{i,j}\leq\ell\leq
d_{i,j}+\varepsilon_{i,j},$ define
\begin{equation}\label{eqn:examination}
R_\ell^{(i,j)}=\left\{(x,y)\in X_i\times X_j\mid 
\ell=\min\{i,j\}+\varepsilon_{i,j}-|\operatorname{supp}(x)\cap\operatorname{supp}(y)|
\right\},    
\end{equation}
where $\operatorname{supp}(x)=\{\ell \mid x_\ell=1\}$.
The pair $\mathcal C_2=\left(X,\left\{R_{\ell}^{(i,j)}\right\}\right)$ is a coherent configration, whose coherent algebra is also known as the {\it Terwilliger algebra of $H(n,2)$}.
A basis of Terwilliger algebra of $H(n,2)$ was constructed as follows (see \cite{Vallentin-2009} or \cite{Schrijver-2005}). 

\begin{lemma}[{\cite[Theorem~4.1]{Vallentin-2009}}, {\cite[Theorem~5.23]{Schrijver-2005}}] \label{lem:cucumber}
For integers $h\in\{0,\ldots,\lfloor n/2\rfloor\}$ and $i,j\in\{h,\ldots,n-h\},$ set  $v(x,y)=|\mathrm{supp}(x)\setminus \mathrm{supp}(y)|,$ $W_h^{(i,j)}=\left(\frac{(-j)_h(i-n)_h}{(-i)_h(j-n)_h}\right)^{-1/2}$ and define matrices $E_{h}^{(i,j)}$ indexed by $X\times X$ whose $(x,y)$-entry is given by
\[\begin{aligned}
 E_{h}^{(i,j)}(x,y)=
 \begin{cases}
 \frac{m_h^{(i,j)} W_h^{(i,j)}}{\left(\binom{n}{i}\binom{n}{j}\right)^{1/2}} Q_h(v(x,y);-(n-i)-1,-i-1,j) & \text{ if }x\in X_i,y\in X_j,\\
0 & \text{ otherwise}. 
\end{cases} 
\end{aligned}\]
Then $\{E^{(i,j)}_{h}\}_{0\leq h\leq d_{i,j}}$ form a basis satisfying (B1)-(B4). 
In particular, the coherent configuration $\mathcal C_2$ is $Q$-polynomial.  
\end{lemma}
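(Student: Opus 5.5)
The plan is to realize the basis $\{E^{(i,j)}_h\}$ as the matrix units of the Wedderburn decomposition of the Terwilliger algebra. Conditions (B1)–(B4) then become structural, and the explicit Hahn-polynomial formula is obtained afterwards by identifying the entries with zonal spherical functions.

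\textbf{Step 1 (module decomposition).} Let $V_i=\mathbb C^{X_i}$. The symmetric group $S_n$ acts on each $V_i$. We use the classical decomposition
\[
V_i\cong \bigoplus_{h=0}^{\min\{i,n-i\}} S^{(n-h,h)},
\]
in which each irreducible module occurs with multiplicity one. For $h\le i\le n-h$, let $V_{i,h}$ be the copy of $S^{(n-h,h)}$ inside $V_i$. The raising map $L:V_i\to V_{i+1}$ (inclusion of supports) is $S_n$-equivariant and maps $V_{i,h}$ isomorphically onto $V_{i+1,h}$ whenever $i+1\le n-h$. Fix an orthonormal basis $\{u_{h,h,t}\}_t$ of $V_{h,h}$ and transport it to each $V_{i,h}$ by normalized powers of $L$. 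This gives orthonormal bases $\{u_{i,h,t}\}_t$ that correspond to one another under a fixed $S_n$-isomorphism. Define
\[
E_h^{(i,j)}=\sum_t u_{i,h,t}\,\tran{u_{j,h,t}}
\qquad (h\le \min\{i,j\},\ \max\{i,j\}\le n-h).
\]

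\textbf{Step 2 (checking (B1)–(B4)).} Conditions (B3) and (B4) follow directly from orthonormality, since these matrices are matrix units. For (B1), note that $V_{i,0}$ is spanned by the all-ones vector $\mathbf 1_{X_i}/\sqrt{|X_i|}$, and $L$ maps it to a positive multiple of $\mathbf 1_{X_{i+1}}$; hence $E_0^{(i,j)}=J/\sqrt{|X_i||X_j|}$. For (B2), each $E_h^{(i,j)}$ commutes with the $S_n$-action on $V_j\to V_i$, so it lies in the commutant of $S_n$ on $\mathrm{Hom}(V_j,V_i)$. That commutant is exactly $\mathcal A^{(i,j)}$, because the orbits of $S_n$ on $X_i\times X_j$ are the relations $R^{(i,j)}_\ell$, which are indexed by $|\operatorname{supp}(x)\cap\operatorname{supp}(y)|$. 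The $E_h^{(i,j)}$ are linearly independent, and their number, $d_{i,j}+1$, equals $\dim\mathcal A^{(i,j)}$. Therefore they form a basis of $\mathcal A^{(i,j)}$.

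\textbf{Step 3 (explicit entries).} By $S_n$-invariance, $E_h^{(i,j)}(x,y)$ depends only on $v(x,y)=|\operatorname{supp}(x)\setminus\operatorname{supp}(y)|$. To identify this function, we use the facts that $E_h^{(i,j)}$ annihilates the image of $L$ from $V_{i-1,h'}$ for $h'\ne h$, and that $\tran L E_h^{(i,j)}$ is a scalar multiple of $E_h^{(i-1,j)}$. Together these yield a three-term (contiguity) recurrence in $v$ whose solution, normalized at $h=0$, is the ${}_3F_2$ in \cref{eqn:anniversary}; this is Delsarte's computation of the spherical functions of the Johnson pair. The constant $m_h^{(i,j)}W^{(i,j)}_h$ is then determined by the trace condition $\tr(E_h^{(i,j)}E_h^{(j,i)})=m_h^{(i,j)}$, which requires computing the norms of $L^{\,j-i}$ on $V_{i,h}$. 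I expect this normalization bookkeeping to be the main obstacle. I would first do the case $i=j$, where it is the standard Johnson scheme $J(n,i)$, and then transport the result to $i\ne j$ using the norm of $L$, which equals $\sqrt{(i-h+1)(n-h-i)}$ up to the standard formula.

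\textbf{Step 4 ($Q$-polynomiality).} By \cref{eqn:examination}, $\ell$ is an affine function of $|\operatorname{supp}x\cap\operatorname{supp}y|$ and hence of $u=v(x,y)$. By Step 3, $\sqrt{|X_i||X_j|}\,E_h^{(i,j)}$ has entries $c_h\,Q_h(u)$, where $Q_h(u)$ is a polynomial of degree $h$ in $u$. Since $Q_1(u)$ is a nonconstant affine function of $u$, we can invert it to express $u$ affinely in terms of $Q_1(u)$. Substituting into $Q_h$ gives a polynomial $v_h^{(i,j)}$ of degree $h$ with $\sqrt{|X_i||X_j|}E_h^{(i,j)}=v_h^{(i,j)}\bigl(\sqrt{|X_i||X_j|}E_1^{(i,j)}\bigr)$, where the polynomial is evaluated entrywise (Hadamard product). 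This is exactly \cref{def:embark}.
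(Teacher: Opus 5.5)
Your Steps 1, 2 and 4 are sound. Your construction is the one behind the cited results: Delsarte's decomposition, orthonormal bases transported by normalized raising maps, and matrix units $\sum_t u_{i,h,t}\tran{u_{j,h,t}}$. The paper gives no argument beyond the citation, and it reproduces the same construction for $\mathcal C_3$ in \cref{sect:revenge}.

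\textbf{The gap is Step 3.} Steps 1--2 only produce \emph{some} basis satisfying (B1)--(B4). The content of the lemma is that the explicitly displayed Hahn-polynomial matrices are that basis, and Step 3 only asserts this (``Delsarte's computation'') while leaving the constant undetermined. Moreover, for $i\neq j$ the target of Step 3 is misstated, so your computation would not land on the displayed formula.

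To see this, note that (B4) gives $E_1^{(i,j)}E_0^{(j,j)}=0$, so every row of $E_1^{(i,j)}$ sums to zero. For a degree-one polynomial $a+bv$ in $v=|\operatorname{supp}(x)\setminus\operatorname{supp}(y)|$, this means $a+b\,i(n-j)/n=0$. So the polynomial must be proportional to $1-\frac{nv}{i(n-j)}$. The displayed formula instead uses $Q_1(v;-(n-i)-1,-i-1,j)=1-\frac{nv}{j(n-i)}$. Concretely, for $n=3$, $i=1$, $j=2$ and $x=\{1\}$, the displayed row is $\frac13(1,1,\frac14)$, which does not sum to zero.

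More generally, the relation $E_h^{(i,j)}\tran{E_{h'}^{(i,j)}}=\delta_{h,h'}E_h^{(i,i)}$ forces orthogonality in $v$ for the weight $\binom{i}{v}\binom{n-i}{j-i+v}$, which is the distribution of $|\operatorname{supp}(x)\setminus\operatorname{supp}(y)|$. The stated Hahn parameters give the weight $\binom{n-i}{v}\binom{i}{j-v}$ instead, which is the distribution of $|\operatorname{supp}(y)\setminus\operatorname{supp}(x)|$. So, keeping the stated variable, $i$ and $j$ must be interchanged in the Hahn parameters. For $h=1$ this correction, with the given $W^{(i,j)}_1$, reproduces the normalization $\tr(E_1^{(i,j)}E_1^{(j,i)})=n-1$ exactly. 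Your recurrence-plus-normalization computation has to be aimed at this corrected formula.

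None of this affects what your construction does prove. In particular, the ``in particular'' clause does not need Step 3 at all:
\begin{enumerate}
\item The row of $E_h^{(i,j)}$ at $x$ is a $\mathrm{Stab}(x)$-invariant vector of $V_{j,h}\subseteq L^{j-h}(V_h)$.
\item Averaging a preimage over $\mathrm{Stab}(x)$ writes it as $\sum_a\gamma_a\binom{s}{a}\binom{j-s}{h-a}$ with $s=|\operatorname{supp}(x)\cap\operatorname{supp}(y)|$. This is a polynomial of degree at most $h$ in $s$.
\item By (B3)--(B4), it is orthogonal to the rows of $E_{h'}^{(i,j)}$ for $h'<h$, with respect to the positive weight $\binom{i}{s}\binom{n-i}{j-s}$ on $d_{i,j}+1\ge h+1$ points. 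By induction on $h$, those rows span the polynomials of degree less than $h$.
\item Since the row is nonzero, it has degree exactly $h$ in $s$, and hence in $v$ or $\ell$.
\end{enumerate}
Your Step 4 then applies verbatim. What remains unproved in your proposal is only the explicit identification of the entries and of the constant, and that must be checked against the corrected formula above.
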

In the following, we will prove it is also $P$-polynomial.
Consider the second eigenmatrix $Q^{(i,j)}$ between $X_i$ and $X_j$ whose $(\ell,h)$-entry is denoted by $Q_h^{(i,j)}(\ell)$.
By \cref{eqn:election}, we have  $Q^{(i,j)}_h(\ell)=\sqrt{|X_i| |X_j|} E_{h}^{(i,j)}(x,y)$ if $(x,y)\in R_\ell^{(i,j)}$.
One can check that $m_h^{(i,j)}=\binom{n}{h}-\binom{n}{h-1}$ and $k_{\ell}^{(i,j)}=\binom{i}{\ell-\varepsilon_{i,j}} \binom{n-i}{j-i+\ell-\varepsilon_{i,j}}$.

For simplicity, denote $Q_k(u):=Q_k(u;-(n-i)-1,-i-1,j)$ and $R_k(\lambda(u)):=R_k(\lambda(u);-(n-i)-1,-i-1,j)$.
It follows from \cref{eqn:examination} that 
\begin{equation}\label{eqn:bleed}
Q^{(i,j)}_h(\ell)=m_h^{(i,j)} W_h^{(i,j)}  Q_h\left(\ell-\varepsilon_{i,j}\right).  
\end{equation}

\begin{theorem}\label{thm:coach}
The coherent configuration $\mathcal C_2$ is $P$-polynomial.  
\end{theorem}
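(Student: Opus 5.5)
We compute the first eigenmatrix $P^{(i,j)}$ explicitly from the second eigenmatrix \cref{eqn:bleed}. Then we use the Hahn/dual Hahn duality \cref{eqn:stress} to read off each normalized column as a polynomial in a single "dual eigenvalue" $\lambda(h)=h(h-n-1)$. Fix $i,j$. If $d_{i,j}=0$, the block has a single relation and there is nothing to prove, so assume $d_{i,j}\ge 1$; in particular $1\le j$ and $i\le n-1$. By \cref{prop:heart}\ref{item:prop(3)},
\[
P^{(i,j)}_\ell(h)=\sqrt{\tfrac{|X_i|}{|X_j|}}\,\frac{k^{(i,j)}_\ell}{m^{(i,j)}_h}\,Q^{(i,j)}_h(\ell)
=\sqrt{\tfrac{|X_i|}{|X_j|}}\,k^{(i,j)}_\ell\,W^{(i,j)}_h\,Q_h(\ell-\varepsilon_{i,j}).
\]
For $\ell=\varepsilon_{i,j}$ the ${}_3F_2$ in \cref{eqn:anniversary} has $-u=0$, so $Q_h(0)=1$. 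Hence
\[
P^{(i,j)}_{\varepsilon_{i,j}}(h)=\sqrt{|X_i|/|X_j|}\,k^{(i,j)}_{\varepsilon_{i,j}}W^{(i,j)}_h.
\]
This is nonzero because $k^{(i,j)}_{\varepsilon_{i,j}}>0$ and $W^{(i,j)}_h\neq 0$; the Pochhammer products in $W_h^{(i,j)}$ are nonzero for $h\le d_{i,j}$. So the normalization in \cref{def:responsible} is well defined, with the natural ordering of the relations by $\ell$.

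Dividing, the factors $W^{(i,j)}_h$ cancel and we get
\[
\overline{P^{(i,j)}_\ell}(h)=\frac{k^{(i,j)}_\ell}{k^{(i,j)}_{\varepsilon_{i,j}}}\,Q_h(\ell-\varepsilon_{i,j})
=\frac{k^{(i,j)}_\ell}{k^{(i,j)}_{\varepsilon_{i,j}}}\,R_{\ell-\varepsilon_{i,j}}(\lambda(h)),
\]
using \cref{eqn:stress}. Write $m=\ell-\varepsilon_{i,j}$. By its defining ${}_3F_2$, $R_m(\lambda(h))$ is a polynomial in $\lambda(h)$, because $(-h)_s(-n+h-1)_s=\prod_{t=0}^{s-1}\bigl(\lambda(h)+t(n+1-t)\bigr)$ is a polynomial of degree $s$ in $\lambda(h)$. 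Its degree is exactly $m$: the $s=m$ term has coefficient $(-m)_m/\bigl((-n+i)_m(-j)_m\,m!\bigr)$, which is nonzero since $m\le d_{i,j}\le\min\{j,n-i\}$ makes the denominators nonzero.

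In particular, for $m=1$ we have $\overline{P^{(i,j)}_{1+\varepsilon_{i,j}}}(h)=c\bigl(1+\lambda(h)/((n-i)j)\bigr)$ with $c=k^{(i,j)}_{1+\varepsilon_{i,j}}/k^{(i,j)}_{\varepsilon_{i,j}}>0$. This is an affine function of $\lambda(h)$ with nonzero slope, so $\lambda(h)$ is an affine function of $\overline{P^{(i,j)}_{1+\varepsilon_{i,j}}}(h)$. Substituting this into the degree-$m$ polynomial above yields polynomials $v^{(i,j)}_\ell$ of degree exactly $\ell-\varepsilon_{i,j}$ with
\[
\overline{P^{(i,j)}_\ell}(h)=v^{(i,j)}_\ell\bigl(\overline{P^{(i,j)}_{1+\varepsilon_{i,j}}}(h)\bigr)
\]
for all $0\le h\le d_{i,j}$. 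This verifies \cref{def:responsible} for every block, diagonal or not.

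The step needing the most care is the bookkeeping with terminating hypergeometric series. We must confirm that the index ranges ($h,\ell-\varepsilon_{i,j}\le d_{i,j}$) keep all lower parameters $(-n+i)_s$ and $(-j)_s$ nonzero. We must also confirm that the identification \cref{eqn:bleed} uses the same parameter triple $(-(n-i)-1,-i-1,j)$ on both sides of the duality, in particular when $i>j$. If $i>j$ causes trouble with the parameters, I would first reduce to $i\le j$ via the transpose relation (B3), or via the complementation symmetry $x\mapsto \mathbf 1-x$, which swaps $X_i$ with $X_{n-i}$.
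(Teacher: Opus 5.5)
Your proposal is correct and is essentially the paper's proof. Both write $\overline{P^{(i,j)}_\ell}(h)=\frac{k^{(i,j)}_\ell}{k^{(i,j)}_{\varepsilon_{i,j}}}R_{\ell-\varepsilon_{i,j}}(\lambda(h))$ via \cref{eqn:bleed,eqn:stress} and then invert the affine relation $R_1(\lambda(h))=1+\lambda(h)/(j(n-i))$; your additional checks only make explicit what the paper leaves implicit (nonvanishing of $P^{(i,j)}_{\varepsilon_{i,j}}(h)$, exact degrees, the case $d_{i,j}=0$, and the transpose reduction for $i>j$, which is valid because transposing gives $P^{(j,i)}_\ell(h)=P^{(i,j)}_\ell(h)$). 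There is one harmless sign slip: since $(t-h)(h-n-1+t)=-\bigl(\lambda(h)+t(n+1-t)\bigr)$, the correct identity is $(-h)_s(-n+h-1)_s=(-1)^s\prod_{t=0}^{s-1}\bigl(\lambda(h)+t(n+1-t)\bigr)$; this does not affect your degree count, and it is what actually yields your (correct) formula for $R_1$.
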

\begin{proof}
According to \cref{prop:heart}(3) and the relation  between Hahn and dual Hahn polynomials by \cref{eqn:stress}, the $(h,\ell)$-entry of the normalized first eigenmatrix $P^{(i,j)}$ is calculated below
\[\begin{aligned}
 \overline{P_\ell^{(i,j)}}(h)&=\frac{P^{(i,j)}_{\ell}(h)}{P^{(i,j)}_{\varepsilon_{i,j}}(h)}=\frac{Q_h^{(i, j)}(\ell) k_{\ell}^{(i, j)}}{Q_h^{(i, j)}(\varepsilon_{i,j}) k_{\varepsilon_{i,j}}^{(i, j)}}=\frac{k_{\ell}^{(i, j)}  Q_h\left(\ell-\varepsilon_{i,j};n,i,j\right)}{k_{\varepsilon_{i,j}}^{(i, j)}  Q_h\left(0;n,i,j\right)} =\frac{k_{\ell}^{(i, j)}  R_{\ell-\varepsilon_{i,j}}(\lambda(h))}{k_{\varepsilon_{i,j}}^{(i, j)}}.
\end{aligned}\]
In particular, when $\ell=1+\varepsilon_{i,j}$, we have
\[\begin{aligned}
 \overline{P^{(i,j)}_{1+\varepsilon_{i,j}}}(h)
 &=\frac{k_{1+\varepsilon_{i,j}}^{(i, j)}  R_{1}(\lambda(h))}{k_{\varepsilon_{i,j}}^{(i, j)}  }.  
\end{aligned}\]
Since $R_1(\lambda(h))=1-\frac{h(n-h+1)}{j (n-i)}=1+\frac{\lambda(h)}{j (n-i)}$, we have 
\begin{equation}\label{eqn:energy}
\lambda(h)=j(n-i)(R_1(\lambda(h))-1).    
\end{equation}
From the definition of dual Hahn polynomials, we know that $R_{\ell}(\lambda(h))$ is a polynomial of degree $\ell$ in terms of $\lambda(h)$, as well as $R_1(\lambda(h))$ by \cref{eqn:energy}.
Then there exists a polynomial $g_{\ell}(u)$ of degree $\ell$ such that $R_{\ell-\varepsilon_{i,j}}(\lambda(h))=g_{\ell-\varepsilon_{i,j}}\left(R_1(\lambda(h))\right).$
Therefore, we have
\[
\frac{k_{\varepsilon_{i,j}}^{(i, j)} }{ k_{\ell}^{(i, j)}}\overline{P_\ell^{(i,j)}}(h) 
=
g_{\ell-\varepsilon_{i,j}}\!\left(
\frac{ k_{\varepsilon_{i,j}}^{(i, j)} }{ k_{1+\varepsilon_{i,j}}^{(i, j)}} \overline{P_{1+\varepsilon_{i,j}}^{(i, j)}}(h)
\right).
\]
Take a polynomial $v_\ell(u)=\frac{k_{\ell}^{(i, j)}}{k_{\varepsilon_{i,j}}^{(i, j)}}\cdot g_{\ell-\varepsilon_{i,j}}\!\left(
\frac{k_{\varepsilon_{i,j}}^{(i, j)} }{ k_{1+\varepsilon_{i,j}}^{(i, j)}} u
 \right)$, then $\overline{P_\ell^{(i,j)}}(h) =v_\ell\left(\overline{P_{1+\varepsilon_{i,j}}^{(i, j)}}(h)\right)$.
 This completes the proof.
\end{proof}

\subsection{Coherent configuration arising from all the subspaces of $\mathbb{F}_q^n$}

In this subsection, we will show that the coherent configuration of all the subspaces of $\F_q^n$ is $P$- and $Q$-polynomial. 

Let $n$ be a positive integer, $\mathbb{F}_q$ be a finite field of order $q$, $\mathcal{P}(\mathbb{F}_q^n)$, called the projective space, be the set of all the subspaces of $\mathbb{F}_q^n$, $\mathcal{G}_q(n,k)$  be the subset of $\mathcal{P}(\mathbb{F}_q^n)$ having dimension $k$. 
The general linear group $G=GL_n(\mathbb{F}_q)$ acts on $\mathcal{P}(\mathbb{F}_q^n)$, and its orbits are $\mathcal{G}_q(n,k)$with $0\leq k\leq n$.  
Moreover, the $G$-orbits acting on the pair $(x,y)$ of $\mathcal{P}(\mathbb{F}_q^n)$, also known as orbitals, are characterized by $\dim x,\dim y$, and $\dim (x\cap y)$. 
Denote $X=\mathcal{P}(\mathbb{F}_q^n)$ and $X_k=\mathcal{G}_q(n,k)$ for $k\in\{0,1,\ldots,n\}$.
For integers $i,j,\ell$ with $k\leq i\leq j\leq n-k$ and  $\varepsilon_{i,j}\leq \ell \leq \min\{i,n-j\}+\varepsilon_{i,j}$ , define relations
\begin{equation}\label{eqn:appeal}
 R_\ell^{(i,j)}=\{(x, y) \in X_i \times X_j \mid  \ell=i+\varepsilon_{i,j}- \dim (x\cap y)\}.   
\end{equation}
The pair $\mathcal C_3=\left(X,\left\{R_\ell^{(i,j)}\right\}\right)$ is a coherent configuration. 

Now we follow the notation and statement in \cite[Section 4]{BPV-2012} to introduce the primitive idempotents of the coherent configuration $\mathcal C_3$.
Let $\R^X=\{f: X\rightarrow \R\}$
be a space equipped with the inner product
\[
(f,g)=\frac{1}{|X|}\sum_{x\in X}f(x)g(x).
\]
Delsarte \cite{Delsarte-1978} proved that
\[
\mathbb{R}^X=\bigoplus_{i=0}^{n}\mathbb{R}^{X_i}=\bigoplus_{i=0}^{n}\bigoplus_{k=0}^{\min\{i,n-i\}}
H_{k,i},
\]
where each $H_{k,i}$ is an irreducible $G$-module. Moreover, $H_{k,i}\cong H_{k,k}$ for $k\le i\le n-k$.
Therefore,
\[
\mathbb{R}^X
=
\bigoplus_{k=0}^{\lfloor n/2\rfloor}
\mathcal{I}_k,
\qquad
\mathcal{I}_k=
\bigoplus_{i=k}^{n-k}
H_{k,i},
\]
where $\mathcal{I}_k$ is the isotypic component corresponding to the irreducible module $H_{k,k}$.
Let $h_k=\dim H_{k,k}={n \brack k}-{n \brack k-1}.$
For each $H_{k,i}$, choose an orthogonal basis $\{e_{k i1},e_{k i2},\ldots,e_{k ih_k}\}$ satisfying
\[
(e_{kir},e_{kis})=\delta_{r,s} {n-2k \brack i-k}q^{k(i-k)}.
\]
Since the norm depends only on $k$ and $i$, we normalize the basis by setting
\[
\phi_{kir}
=\left({n-2k \brack i-k}
q^{k(i-k)}\right) ^{-\frac{1}{2}} \cdot e_{kir},
\qquad
r=1,\ldots,h_k.
\]
Then $\{\phi_{kir}\}_{r=1}^{h_k}$ forms a standard orthonormal basis of $H_{k,i}$.
Define $E_{k}^{(i,j)}(x,y)$ by
\[
E_{k}^{(i,j)}(x,y)=\frac{1}{|X|}\sum_{r=1}^{h_k}\phi_{kir}(x)\phi_{kjr}(y),
\qquad
k\le i,j\le n-k.
\]
According to the orthonormal basis $\{\phi_{kir}\}_{r=1}^{h_k}$, the matrices $E_{k}^{(i,j)}$ satisfy
\[
E_{k}^{(i,j)}E_{h}^{(i',j')}=\delta_{k,h}\delta_{j,i'}E_{h}^{(i,j')}.
\]

For the coherent configuration from all subspaces of $\F_q^n$, we need to follow the $q$-Hahn polynomials $Q_h(n,i,j; u)$ used in \cite{BPV-2012}.
For integers $n,i,j,h$  with 
$0\leq h \leq \min\{i, n-j\}$,  the $q$-Hahn polynomials are the polynomials uniquely determined by the following properties: 
\begin{enumerate}
    \item  $Q_h(n,i,j; u)$ has degree $h$ in the variable $[u]=q^{1-u}{u \brack 1}$;
    \item They are orthogonal polynomials in terms of weights 
    \[
    w(n,i,j;u)={i \brack u} {n-i \brack j-i+u} q^{u(j-i+u)}, \quad 0\leq u\leq \min\{i,n-j\};
    \]
    \item  $Q_h(n,i,j;0)=1$.
\end{enumerate}

\begin{lemma}[{\cite[Theorem~4.2, Lemma~4.5]{BPV-2012}}]
For $h\leq i\leq j\leq n-h$,  define
\begin{align}\label{eqn:cucumber}
 E_{h}^{(i,j)}(x,y)=\begin{cases} m_h \frac{ {j-h \brack i-h} }{{n \brack j} {j \brack i}}  Q_h(n,i,j;i-\dim(x\cap y)) & \text{ if}\quad x\in X_i,\ y\in X_j,\\
0 & \text{otherwise}.   
\end{cases} 
\end{align}
Then $\left\{E^{(i,j)}_{h}\right\}_{0\leq h\leq i}$ form a basis of coherent configuration of all subspaces of $\mathcal{P}(\mathbb{F}_q^n)$ satisfying (B1)-(B4). 
In particular, the coherent configuration $\mathcal C_3$ is $Q$-polynomial.  
\end{lemma}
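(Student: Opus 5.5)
The plan is to work throughout with the decomposition $\mathbb R^X=\bigoplus_k\mathcal I_k$ and the orthonormal bases $\{\phi_{kir}\}$ introduced above. The matrices $E_k^{(i,j)}$ built from them will be identified with the block-normalized matrices in \cref{eqn:cucumber}. The proof splits into an algebraic part, giving (B1)--(B4) and the basis property, and an analytic part, identifying the entries with $q$-Hahn polynomials and deducing $Q$-polynomiality.

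\textbf{Algebraic part.} I first fix the choice of bases. For $k\le i\le n-k$ I choose the bases of $H_{k,i}$ compatibly, as images of one fixed basis of $H_{k,k}$ under fixed $G$-isomorphisms $H_{k,k}\to H_{k,i}$; this is possible since each $H_{k,i}\cong H_{k,k}$. With this choice, $\sum_r\phi_{kir}(x)\phi_{kjr}(y)$ is the kernel of a $G$-equivariant map $H_{k,j}\to H_{k,i}$ composed with orthogonal projection. Hence $E_k^{(i,j)}(gx,gy)=E_k^{(i,j)}(x,y)$ for all $g\in G$. Because the orbitals on $X_i\times X_j$ are determined by $\dim(x\cap y)$, this gives $E_k^{(i,j)}\in\mathcal A^{(i,j)}$.

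The multiplication rule (B4) then follows from orthonormality of the $\phi_{kir}$, and (B3) is immediate from the defining formula. For $k=0$, the module $H_{0,i}$ consists of constants on $X_i$, so $E_0^{(i,j)}$ is a multiple of $J$; the normalization fixes it as in (B1). For (B2), the relevant indices are $0\le k\le\min\{i,j,n-i,n-j\}=d_{i,j}$. I will check that the number of these $k$ equals $|I^{(i,j)}|$, the number of possible values of $\dim(x\cap y)$. Linear independence follows from (B4), since the $E_k^{(i,j)}$ are mutually orthogonal nonzero matrices.

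\textbf{Analytic part.} Fix $y\in X_j$ with $i\le j$. The function $x\mapsto E_h^{(i,j)}(x,y)$ lies in $H_{h,i}$ and is invariant under $\mathrm{Stab}_G(y)$, so it is a function $F_h(u)$ of $u=i-\dim(x\cap y)$. I will then verify the three defining properties of $Q_h(n,i,j;u)$.
\begin{enumerate}
\item Orthogonality in $u$ with weights $w(n,i,j;u)$. These weights are exactly the valencies $|\{x\in X_i:\ i-\dim(x\cap y)=u\}|$. The orthogonality comes from $\operatorname{tr}\bigl(E_h^{(i,j)}\tran{E_{h'}^{(i,j)}}\bigr)=0$ for $h\ne h'$, which is (B4) again.
\item Degree at most $h$ in $[u]$. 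The space $\bigoplus_{k\le h}H_{k,i}$ is the image of $\mathbb R^{X_h}$ under the inclusion map $x\mapsto\sum_{z\subseteq x,\ \dim z=h}\delta_z$. Applying this to the zonal function of $y$ expresses $F_h$ as a combination of counts $|\{z\subseteq x\cap y\}|$ of fixed dimension. Each such count is a Gaussian binomial in $\dim(x\cap y)$, and hence a polynomial of degree at most $h$ in $[u]$.
\item Normalization. Orthogonality against lower degrees then forces the degree to be exactly $h$. The scalar $m_h\,{j-h\brack i-h}/\bigl({n\brack j}{j\brack i}\bigr)$ is found by evaluating at $u=0$ and comparing traces. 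The entries for $i>j$ are obtained by transposition.
\end{enumerate}

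\textbf{$Q$-polynomiality.} The entries of $\sqrt{|X_i||X_j|}E_1^{(i,j)}$ are a nonconstant affine function of $[u]$. Hadamard powers of it therefore span all polynomials in $[u]$ of degree at most $d$. Consequently $\sqrt{|X_i||X_j|}E_h^{(i,j)}=v_h^{(i,j)}\bigl(\sqrt{|X_i||X_j|}E_1^{(i,j)}\bigr)$ for a polynomial $v_h^{(i,j)}$ of degree exactly $h$, as \cref{def:embark} requires.

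I expect the main obstacle to be item (ii) of the analytic part: the degree bound together with pinning down the explicit normalizing constant. For this I would follow Delsarte's construction of the $H_{k,i}$ as kernels of the down-maps intersected with images of up-maps, and import the needed $q$-binomial counting identities from \cite{BPV-2012} rather than rederiving them.
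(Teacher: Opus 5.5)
\textbf{Verdict.} The proposal has a genuine gap at the normalization step, and the constant it sets out to verify is not the correct one.

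\textbf{What is sound.} Your route is the standard harmonic-analysis derivation behind \cite[Theorem~4.2, Lemma~4.5]{BPV-2012}, which the paper only cites without proof. The algebraic part gives (B1)--(B4) for the matrices $\frac{1}{|X|}\sum_r\phi_{hir}(x)\phi_{hjr}(y)$. Items (i) and (ii), together with the degree argument, correctly show that for $i\le j$ their entries are $c_h^{(i,j)}Q_h(n,i,j;u)$ for some scalar $c_h^{(i,j)}$. One small correction: $w(n,i,j;u)$ counts $y\in X_j$ for a fixed $x\in X_i$, not $x$ for a fixed $y$ as you state. The two counts differ only by the factor ${n\brack i}/{n\brack j}$, so this is harmless. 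Item (ii), which you flag as the main obstacle, works as you sketch it. This much already suffices for the $Q$-polynomial clause.

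\textbf{The step that fails.} The problem is the end of (iii). The displayed scalar $m_h{j-h\brack i-h}/({n\brack j}{j\brack i})$ is not the value forced by (B1)--(B4) when $i\ne j$, so no evaluation at $u=0$ can confirm it.
\begin{itemize}
\item Your two parts already collide at $h=0$. The algebraic part gives $E_0^{(i,j)}=J/\sqrt{{n\brack i}{n\brack j}}$, whereas the displayed formula gives $J/{n\brack j}$.
\item It fails even when $|X_i|=|X_j|$. Take $n=3$ and $(i,j,h)=(1,2,1)$. Then $w(3,1,2;0)=q+1$ and $w(3,1,2;1)=q^2$, so $Q_1(3,1,2;1)=-(q+1)/q^2$. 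The displayed formula becomes $E_1^{(1,2)}=\frac1q\bigl(N-\frac{q+1}{q^2+q+1}J\bigr)$, where $N$ is the point--line incidence matrix of $PG(2,q)$. Since $NN^{T}=qI+J$, this gives $E_1^{(1,2)}E_1^{(2,1)}=\frac1q E_1^{(1,1)}$, contradicting (B4).
\end{itemize}

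\textbf{What the constant actually is.} ``Comparing traces'' is not available off the diagonal; the constant has to come from $E_h^{(i,j)}E_h^{(j,i)}=E_h^{(i,i)}$.
\begin{itemize}
\item Take the $\phi_{hir}$ obtained from $H_{h,h}$ via the up maps; this is the construction behind the norms ${n-2h\brack i-h}q^{h(i-h)}$ quoted in the paper. Write $(\psi_{i,j}f)(y)=\sum_{x\subseteq y,\,x\in X_i}f(x)$ and $N_{h,i}={n-2h\brack i-h}q^{h(i-h)}$.
\item Then $\phi_{hjr}=\frac{(N_{h,i}/N_{h,j})^{1/2}}{{j-h\brack i-h}}\,\psi_{i,j}\phi_{hir}$. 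Hence $E_h^{(i,j)}(x,y)=\frac{(N_{h,i}/N_{h,j})^{1/2}}{{j-h\brack i-h}}\sum_{x'\subseteq y}E_h^{(i,i)}(x,x')$.
\item For $x\subseteq y$ that sum equals $\|E_h^{(i,i)}\psi_{i,j}^{T}\delta_y\|^2/{j\brack i}={j-h\brack i-h}^2\frac{N_{h,j}}{N_{h,i}}\frac{m_h}{{n\brack j}{j\brack i}}$.
\item This gives $c_h^{(i,j)}=m_h\frac{{j-h\brack i-h}}{{n\brack j}{j\brack i}}\bigl(N_{h,j}/N_{h,i}\bigr)^{1/2}$. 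It differs from the displayed constant whenever $i<j$, except when $h=0$ and $j=n-i$.
\end{itemize}

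\textbf{What to do.} You must either prove this corrected constant, or assert only proportionality to $Q_h(n,i,j;u)$. Proportionality is all that the $Q$-polynomial conclusion, and the paper's later ratio arguments, actually use. Importing the constant from \cite{BPV-2012} does not settle the issue unless you track how their normalization translates to the orthonormal $\phi_{hir}$.
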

The polynomial $Q_h(n,i,j; u)$ and the classical dual $q$-Hahn polynomial in \cref{eqn:anniversary} differ by a nonzero scalar factor.
According to Theorem 2.5, Eq. (2.1) and Proposition 2.3 in \cite{Dunkl-1978}, the precise relation is given below
\begin{equation}\label{eqn:dash}
 Q_h(n,i,j; u)=C_h^{(i,j)} \cdot Q_h(u;q^{-(n-i)-1},q^{-i-1},j|q),    
 \end{equation} 
 where $C_h^{(i,j)} =\frac{(q^{i-j})_h q^{h n-3h(h-1)/2}(q^{j})_h}{(-1)^h q^{jh} (q^{n-j})_h (q^{i})_h}$. 
Denote $Q_h(u | q)
:=
Q_h\left(
u;q^{-(n-i)-1},q^{-i-1},j| q
\right)$ and $R_h(\mu(u)| q)
:=
R_h\left(
\mu(u);q^{-(n-i)-1},q^{-i-1},j| q
\right).$ for simplicity.

\begin{theorem}\label{thm:maze}
 The coherent configuration $\mathcal C_3$ is $P$-polynomial.     
\end{theorem}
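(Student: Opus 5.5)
The plan mirrors the proof of \cref{thm:coach}, with the Hahn/dual Hahn duality replaced by its $q$-analogue. Fix $i\le j$; the blocks with $i>j$ follow by transposition, using (B3) and \cref{prop:heart}. By \cref{eqn:election}, \cref{eqn:appeal} and \cref{eqn:cucumber}, for $(x,y)\in R_\ell^{(i,j)}$ we have $u=i-\dim(x\cap y)=\ell-\varepsilon_{i,j}$. Hence
\[
Q_h^{(i,j)}(\ell)=\sqrt{|X_i||X_j|}\, m_h \frac{{j-h \brack i-h}}{{n \brack j}{j \brack i}}\, Q_h(n,i,j;\ell-\varepsilon_{i,j}),
\]
and the prefactor depends only on $h$ (for fixed $i,j$). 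Then \cref{prop:heart}\ref{item:prop(3)} gives $P_\ell^{(i,j)}(h)=c_h\, k_\ell^{(i,j)}\, Q_h(n,i,j;\ell-\varepsilon_{i,j})$ with $c_h\neq0$ independent of $\ell$. Since $Q_h(n,i,j;0)=1$, we have $P_{\varepsilon_{i,j}}^{(i,j)}(h)=c_h k_{\varepsilon_{i,j}}^{(i,j)}\neq0$, so the normalization in \cref{def:responsible} is well defined, and
\[
\overline{P_\ell^{(i,j)}}(h)=\frac{k_\ell^{(i,j)}}{k_{\varepsilon_{i,j}}^{(i,j)}}\,Q_h(n,i,j;\ell-\varepsilon_{i,j}).
\]

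Next I would use \cref{eqn:dash} to write $Q_h(n,i,j;u)=C_h^{(i,j)}Q_h(u|q)$. Evaluating at $u=0$, where the classical ${}_3\phi_2$ equals $1$, forces $C_h^{(i,j)}=1$ by property (iii). Alternatively, we simply observe that $C_h^{(i,j)}$ cancels in the ratio, because it also appears at $u=0$. Either way,
\[
\overline{P_\ell^{(i,j)}}(h)=\frac{k_\ell^{(i,j)}}{k_{\varepsilon_{i,j}}^{(i,j)}}\, Q_h(\ell-\varepsilon_{i,j}|q)=\frac{k_\ell^{(i,j)}}{k_{\varepsilon_{i,j}}^{(i,j)}}\, R_{\ell-\varepsilon_{i,j}}(\mu(h)|q),
\]
using the $q$-duality identity stated after the definition of the dual $q$-Hahn polynomials.

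The key step is the following. From its ${}_3\phi_2$ expression, $R_m(\mu(h)|q)$ is a polynomial of degree exactly $m$ in $\mu(h)=q^{-h}+q^{-n+h-1}$. This is because $(q^{-h};q)_s(q^{-n+h-1};q)_s$ is a polynomial of degree $s$ in $\mu(h)$: each factor $(1-q^{-h+r})(1-q^{-n+h-1+r})=1-q^{r}\mu(h)+q^{2r-n-1}$. The leading coefficient is nonzero, since the denominators $(q^{-n+i};q)_m(q^{-j};q)_m$ are finite and nonzero in the admissible range $m\le\min\{i,n-j\}$. In particular, $R_1(\mu(h)|q)=1+\frac{(1-q^{-h})(1-q^{-n+h-1})\,q}{(1-q)(1-q^{-n+i})(1-q^{-j})}$ is affine in $\mu(h)$ with nonzero slope. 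Therefore $\mu(h)$ is an affine function of $R_1(\mu(h)|q)$, and there are polynomials $g_m$ of degree $m$ with $R_m(\mu(h)|q)=g_m(R_1(\mu(h)|q))$.

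Finally, exactly as in \cref{thm:coach}, I would set
\[
v_\ell(u)=\frac{k_\ell^{(i,j)}}{k_{\varepsilon_{i,j}}^{(i,j)}}\, g_{\ell-\varepsilon_{i,j}}\Bigl(\frac{k_{\varepsilon_{i,j}}^{(i,j)}}{k_{1+\varepsilon_{i,j}}^{(i,j)}}\,u\Bigr).
\]
This has degree $\ell-\varepsilon_{i,j}$ and satisfies $\overline{P_\ell^{(i,j)}}(h)=v_\ell\bigl(\overline{P_{1+\varepsilon_{i,j}}^{(i,j)}}(h)\bigr)$. The main obstacle I expect is bookkeeping rather than conceptual. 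One must check that the parameter conventions of \cite{BPV-2012} (the variable $[u]$, the range $h\le i\le j\le n-h$) match those of the classical $q$-Hahn family, so that the duality swap $u\leftrightarrow h$ is legitimate on the whole index range. One must also make sure that no denominator $(q^{-n+i};q)_s$ or $(q^{-j};q)_s$ vanishes for $s\le\min\{i,n-j\}$; this holds since $i\le n$ and $s\le j$, $s\le n-i$. The case where the block has a single relation ($\widetilde r_{i,j}=0$) is trivial and would be noted separately.
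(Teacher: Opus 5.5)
Your proposal is correct and follows essentially the same route as the paper: use \cref{prop:heart} to write $\overline{P_\ell^{(i,j)}}(h)=\frac{k_\ell^{(i,j)}}{k_{\varepsilon_{i,j}}^{(i,j)}}R_{\ell-\varepsilon_{i,j}}(\mu(h)|q)$, note that $R_1$ is affine in $\mu(h)$, and rescale to obtain $v_\ell$. One slip, which does not affect the argument: your displayed $R_1$ drops the factor $(1-q^{-1})$ coming from $(q^{-1};q)_1$, so the correction term is really $-\frac{(1-q^{-h})(1-q^{-n+h-1})}{(1-q^{-n+i})(1-q^{-j})}$; this still has nonzero slope in $\mu(h)$, and your extra checks (well-definedness of the normalization, cancellation of $C_h^{(i,j)}$, and exact degree of $R_m$ in $\mu(h)$) are points the paper leaves implicit.
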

\begin{proof}
By \cref{eqn:appeal,eqn:cucumber,eqn:dash}, we have
\begin{equation}
Q^{(i,j)}_h(\ell)=m_h^{(i,j)}C_h^{(i,j)} \sqrt{\frac{|X_i|}{|X_j|}} \frac{ {j-h \brack i-h} }{{j \brack i}} Q_h\left(\ell-\varepsilon_{i,j}\right|q).  
\end{equation}
Similarly, we obtain the normalized first eigenmatrix
\[\begin{aligned}
 \overline{P_\ell^{(i,j)}}(h)&=\frac{P^{(i,j)}_{\ell}(h)}{P^{(i,j)}_{\varepsilon_{i,j}}(h)}=\frac{Q_h^{(i, j)}(\ell) k_{\ell}^{(i, j)}}{Q_h^{(i, j)}(\varepsilon_{i,j}) k_{\varepsilon_{i,j}}^{(i, j)}}=\frac{k_{\ell}^{(i, j)}  Q_h\left(\ell-\varepsilon_{i,j}|q\right)}{k_{\varepsilon_{i,j}}^{(i, j)}  Q_h\left(0|q\right)} =\frac{k_{\ell}^{(i, j)}  R_{\ell-\varepsilon_{i,j}}(\mu(h)|q)}{k_{\varepsilon_{i,j}}^{(i, j)}}.
\end{aligned}\]
In particular, when $\ell=1+\varepsilon_{i,j}$, we have
$\overline{P^{(i,j)}_{1+\varepsilon_{i,j}}}(h)
 =\frac{k_{1+\varepsilon_{i,j}}^{(i, j)}  R_{1}(\mu(h)|q)}{k_{\varepsilon_{i,j}}^{(i, j)}  }.$
From the definition of dual $q$-Hahn polynomials, we obtain
\[
R_1(\mu(h)|q)
=
\frac{\mu(h)-\theta_{ij}}
     {(1-q^{-n+i})(1-q^{-j})},
\]
where $\theta_{ij}
=
q^{-n+i}+q^{-j}-q^{-n+i-j}+q^{-n-1}.$
Therefore, $R_{\ell}(\mu(h)|q)$ is a polynomial of degree $\ell$ in terms of $\mu(h)$, as well as $R_1(\mu(h)|q)$.
Then there exists a polynomial $g_{\ell}(u)$ of degree $\ell$ such that $R_{\ell-\varepsilon_{i,j}}(\mu(h))=g_{\ell-\varepsilon_{i,j}}\left(R_1(\mu(h))\right).$
Namely, we have
\[
\frac{k_{\varepsilon_{i,j}}^{(i, j)} }{ k_{\ell}^{(i, j)}}\overline{P_\ell^{(i,j)}}(h) 
=
g_{\ell-\varepsilon_{i,j}}\!\left(
\frac{ k_{\varepsilon_{i,j}}^{(i, j)} }{ k_{1+\varepsilon_{i,j}}^{(i, j)}} \overline{P_{1+\varepsilon_{i,j}}^{(i, j)}}(h)
\right).
\]
Take a polynomial $v_\ell(u)=\frac{k_{\ell}^{(i, j)}}{k_{\varepsilon_{i,j}}^{(i, j)}}\cdot g_{\ell-\varepsilon_{i,j}}\!\left(
\frac{k_{\varepsilon_{i,j}}^{(i, j)} }{ k_{1+\varepsilon_{i,j}}^{(i, j)}} u
 \right)$, then $\overline{P_\ell^{(i,j)}}(h) =v_\ell\left(\overline{P_{1+\varepsilon_{i,j}}^{(i, j)}}(h)\right)$.
 This completes the proof.    
\end{proof}

We conclude this section by showing that the three families considered
above satisfy the compatibility condition in \cref{prop:lump}.
Consequently, the corresponding cross-block intersection matrices
$B_1^{(i,i,j)}$ are tridiagonal for all distinct $i,j$.
\begin{proposition}
For each of the three families above, every cross-block intersection matrix $B_1^{(i,i,j)}$is tridiagonal for all $i,j$.
\end{proposition}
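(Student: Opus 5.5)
By \cref{prop:lump}, it suffices to show, for each family and each pair of distinct fibers $i,j$, that the column $\overline{P_1^{(i,i)}}$ is an affine function of $\overline{P_{1+\varepsilon_{i,j}}^{(i,j)}}=\overline{P_2^{(i,j)}}$ with nonzero slope, on the common index set $0\le h\le \widetilde r_{i,j}$. In all three families, the primitive idempotent $E_h^{(i,j)}$ is indexed by the same irreducible constituent as $E_h^{(i,i)}$, so the index $h$ means the same thing in both blocks. The plan is therefore to write both normalized columns as explicit functions of the spectral parameter attached to $h$, and then check that they are affine in each other. A direct alternative would be to compute the numbers $p^{(i,i,j)}_{1,\ell,\ell'}$ combinatorially. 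For $\mathcal C_2$ and $\mathcal C_3$ this is in fact immediate: $A_1^{(i,i)}$ changes a set or subspace by one ``step'', so the intersection with a fixed $y$ changes by at most one. I would use this as a cross-check, but base the proof on the spectral route.

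For $\mathcal C_2$, the proof of \cref{thm:coach} gives $\overline{P_2^{(i,j)}}(h)=\frac{k_2}{k_1}\bigl(1+\lambda(h)/(j(n-i))\bigr)$ for $i\neq j$, with $\lambda(h)=h(h-n-1)$. Applying the same formula to the diagonal block (where $\varepsilon_{i,i}=0$ and $j=i$) gives $\overline{P_1^{(i,i)}}(h)=k_1^{(i,i)}\bigl(1+\lambda(h)/(i(n-i))\bigr)$. Both columns are affine in $\lambda(h)$, and both slopes are nonzero. Eliminating $\lambda(h)$ yields the required $\alpha_{i,j}\neq0$ and $\beta_{i,j}$.

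For $\mathcal C_3$ the argument is identical. Both columns are affine in $\mu(h)=q^{-h}+q^{-n+h-1}$ by the explicit formula for $R_1(\mu(h)\mid q)$ in the proof of \cref{thm:maze}, with denominators $(1-q^{-n+i})(1-q^{-j})\neq0$. The only point needing care is that \cref{thm:maze} states its formulas for $i\le j$. For $i>j$, I would use (B3), namely $P^{(j,i)}$ is obtained from $P^{(i,j)}$ by the scaling in \cref{prop:heart}\ref{item:prop(1)}, \ref{item:prop(3)}. This scaling is row-independent up to factors that cancel after normalization, so the affine relation persists, or one can redo the computation with the roles of $i$ and $j$ swapped.

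For $\mathcal C_1$, set $\theta=\pi/N$. The diagonal block gives $\overline{P_1^{(i,i)}}(h)=2\cos(2h\theta)$. If $i\not\equiv j\pmod 2$, Case I of the preceding proof gives $\overline{P_2^{(i,j)}}(h)=\cos(3h\theta)/\cos(h\theta)=4\cos^2(h\theta)-3=2\cos(2h\theta)-1$, so $\alpha_{i,j}=1$ and $\beta_{i,j}=1$. If $i\equiv j\pmod 2$, then \cref{eqn:junior} gives $\overline{P_2^{(i,j)}}=P_1^{(i,i)}$, so $\alpha_{i,j}=1$ and $\beta_{i,j}=0$. Here the last row $h=s_{i,i}$ must be checked separately when $N$ is even: there $P_1^{(i,i)}(d)=-2$ fits $2\cos(2d\theta)=-2$, consistent with the Chebyshev formulas already derived.

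The main obstacle I expect is bookkeeping rather than ideas. One must verify that the hypothesis $P_{\varepsilon_{i,j}}^{(i,j)}(h)\neq0$ holds (it does, since $\cos(h\theta)\neq0$ for $h<N/2$ and $Q_h(0)=1$). One must also handle the asymmetric case $i>j$ in $\mathcal C_2$ and $\mathcal C_3$, and the boundary row of $\mathcal C_1$. A last subtlety: \cref{prop:lump} only covers $\widetilde r_{i,j}\ge1$. When $\widetilde r_{i,j}=0$, $B_1^{(i,i,j)}$ is $1\times1$ and trivially tridiagonal, so those pairs will be disposed of separately at the outset.
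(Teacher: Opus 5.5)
Your proposal is correct and follows the paper's route: you verify condition (i) of \cref{prop:lump} family by family, writing $\overline{P_1^{(i,i)}}$ and $\overline{P_2^{(i,j)}}$ as affine functions of a common spectral parameter ($\cos 2h\theta$, $\lambda(h)$, $\mu(h)$), just as the paper does. Your extra handling of $i>j$ via $\overline{P^{(j,i)}}=\overline{P^{(i,j)}}$ (the paper simply assumes $i\le j$) and of blocks with $\widetilde r_{i,j}=0$ fills in points the paper leaves implicit; the only slip, which the paper shares, is for $\mathcal C_1$ with $i\not\equiv j \pmod 2$ and $N=3$, where $k_2^{(i,j)}=1$ makes $\alpha_{i,j}=2$ rather than $1$, and this does not affect the conclusion.
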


\begin{proof}
We verify the condition in \cref{prop:lump} for each family. 
 Assume that $i\leq j$.
\begin{enumerate}
    \item {\bf{Coherent configuration $\mathcal C_1$ arising from a tight
Euclidean $t$-design in $\mathbb R^2$.}} \\
Set $\theta=\pi/N$. If
$i\not\equiv j\pmod 2$, then 
\[\overline{P_2^{(i,j)}}(h)=\frac{\cos(3h\theta)}{\cos(h\theta)}=4\cos^2(h\theta)-3, \quad 
\overline{P_1^{(i,i)}}(h)=2\cos(2h\theta)=4\cos^2(h\theta)-2.\]
Hence
\[
\overline{P_1^{(i,i)}}(h)
=
\overline{P_2^{(i,j)}}(h)+1.
\]
If $i\equiv j\pmod 2$, then, under the natural ordering of the
relations,
\[
\overline{P_1^{(i,i)}}(h)
=
\overline{P_2^{(i,j)}}(h).
\]
\item {\bf{Coherent configuration $\mathcal C_2$ associated with the
Terwilliger algebra of $H(n,2)$.}}\\
Since $R_1^{i,j}(\lambda(h))=1+\frac{\lambda(h)}{j(n-i)}$ with $\lambda(h)=h(h-n-1)$,
we have 
\[ \overline{P_1^{(i,i)}}(h)=i(n-i)+\lambda(h), \quad \overline{P_2^{(i,j)}}(h)=\frac{k_2^{(i,j)}}{k_1^{(i,j)}}\left(1+\frac{\lambda(h)}{j(n-i)}\right).\]
Therefore,
\[
\overline{P_1^{(i,i)}}(h)
=
\alpha_{i,j}\overline{P_2^{(i,j)}}(h)
+\beta_{i,j},
\]
where $\alpha_{i,j}
=
\frac{j(n-i)k_1^{(i,j)}}{k_2^{(i,j)}} \neq 0$ and $\beta_{i,j}=(i-j)(n-i)$.

\item {\bf{Coherent configuration $\mathcal C_3$ on the set of all
subspaces of $\mathbb F_q^n$.}}\\
 Set $D_{i,j}=(1-q^{-n+i})(1-q^{-j})$
and $\vartheta_{i,j}=q^{-n+i}+q^{-j}-q^{-n+i-j}+q^{-n-1}$.
Since $R_1^{i,j}(\mu(h)\mid q)
=
\frac{\mu(h)-\vartheta_{i,j}}{D_{i,j}}$,
we obtain
\[
\overline{P_1^{(i,i)}}(h)
=
\frac{k_1^{(i,i)}}{D_{i,i}}
\bigl(\mu(h)-\vartheta_{i,i}\bigr), \quad 
\overline{P_2^{(i,j)}}(h)
=
\frac{k_2^{(i,j)}}{k_1^{(i,j)}D_{i,j}}
\bigl(\mu(h)-\vartheta_{i,j}\bigr).
\]
Consequently,
\[
\overline{P_1^{(i,i)}}(h)
=
\alpha_{i,j}\overline{P_2^{(i,j)}}(h)
+\beta_{i,j},
\]
where $\alpha_{i,j}=\frac{k_1^{(i,i)}k_1^{(i,j)}D_{i,j}}{k_2^{(i,j)}D_{i,i}} \neq 0$ and 
$\beta_{i,j}=\frac{k_1^{(i,i)}\bigl(\vartheta_{i,j}-\vartheta_{i,i}\bigr)}{D_{i,i}}$.
\end{enumerate}
According to \cref{prop:lump}, every corresponding cross-block intersection
matrix $B_1^{(i,i,j)}$ is tridiagonal for all three families.
\end{proof}

\section{Concluding remarks and open problems}

In this paper, we introduced a new notion of $P$-polynomial coherent
configurations. We also presented examples that are $P$-polynomial in
the sense of \cref{def:responsible}, but not in the sense of
\cref{def:formal}. This leads to the following question.

\begin{problem}
Are there any more $P$-polynomial coherent configurations and $Q$-polynomial coherent configurations?
\end{problem}
Next step is to develop a theory of duality for coherent
configurations. In particular, we propose the following questions.

\begin{problem}
\begin{enumerate}
    \item Is there a notion of a ``dual'' coherent configuration under
    suitable algebraic conditions on the vertex set?
    \item Can one define self-dual $P$- and $Q$-polynomial coherent
    configurations?
    \item Can one define formally self-dual $P$- and $Q$-polynomial
    coherent configurations?
\end{enumerate}
\end{problem}

For $P$- and $Q$-polynomial association schemes, Delsarte theory plays a fundamental role in proofs of Erd\H{o}s--Ko--Rado theorem \cite{MR771733,MR867648,MR657052,MR887358,MR3063159}.
There has also been an approach to the Erd\H{o}s--Ko--Rado theorem in the setting of coherent configurations \cite{MR3194752}. 
This raises the following problem.

\begin{problem}
Can Delsarte's approach to the Erd\H{o}s--Ko--Rado theorem be extended to the framework of $P$- and $Q$-polynomial coherent configurations?
\end{problem}

\section*{Acknowledgments}
     Sho Suda is supported by JSPS KAKENHI Grant Numbers 26K06904. 
     Yan Zhu is supported by National Natural Science Foundation of China No.\ 12571353 and Natural Science Foundation of Shanghai No.\ 24ZR1455100.

  \bibliographystyle{plain}
  \bibliography{references.bib}

@article {Suda-2022,
  author  = {Sho Suda},
  title   = {${Q}$-polynomial coherent configurations},
  journal = {Linear Algebra Appl.},
  volume  = {643},
  pages   = {166--195},
  year    = {2022},
  doi     = {10.1016/j.laa.2022.02.009},
}

@article {MR3194752,
    AUTHOR = {Suda, Sho and Tanaka, Hajime},
     TITLE = {A cross-intersection theorem for vector spaces based on
              semidefinite programming},
   JOURNAL = {Bull. Lond. Math. Soc.},
  FJOURNAL = {Bulletin of the London Mathematical Society},
    VOLUME = {46},
      YEAR = {2014},
    NUMBER = {2},
     PAGES = {342--348},
      ISSN = {0024-6093,1469-2120},
   MRCLASS = {05D05 (90C22 90C27)},
  MRNUMBER = {3194752},
MRREVIEWER = {Tanbir\ Ahmed},
       DOI = {10.1112/blms/bdt101},
       URL = {https://doi.org/10.1112/blms/bdt101},
}

@article{gavrilyuk2025extremal,
  title={Extremal orthogonal arrays},
  author={Gavrilyuk, Alexander L and Suda, Sho},
  journal={arXiv preprint arXiv:2512.23459},
  year={2025}
}

@article {MR2677683,
    AUTHOR = {Suda, Sho},
     TITLE = {Coherent configurations and triply regular association schemes
              obtained from spherical designs},
   JOURNAL = {J. Combin. Theory Ser. A},
  FJOURNAL = {Journal of Combinatorial Theory. Series A},
    VOLUME = {117},
      YEAR = {2010},
    NUMBER = {8},
     PAGES = {1178--1194},
      ISSN = {0097-3165,1096-0899},
   MRCLASS = {05E30 (05B30)},
  MRNUMBER = {2677683},
MRREVIEWER = {Paul-Hermann\ Zieschang},
       DOI = {10.1016/j.jcta.2010.03.016},
       URL = {https://doi.org/10.1016/j.jcta.2010.03.016},
}

@article {MR771733,
    AUTHOR = {Wilson, Richard M.},
     TITLE = {The exact bound in the {E}rd{\H{o}}s-{K}o-{R}ado theorem},
   JOURNAL = {Combinatorica},
  FJOURNAL = {Combinatorica. An International Journal of the J\'anos Bolyai
              Mathematical Society},
    VOLUME = {4},
      YEAR = {1984},
    NUMBER = {2-3},
     PAGES = {247--257},
      ISSN = {0209-9683},
   MRCLASS = {05A05 (05C35 05C65)},
  MRNUMBER = {771733},
MRREVIEWER = {Noga\ Alon},
       DOI = {10.1007/BF02579226},
       URL = {https://doi.org/10.1007/BF02579226},
}

@article {MR867648,
    AUTHOR = {Frankl, P. and Wilson, R. M.},
     TITLE = {The {E}rd{\H{o}}s-{K}o-{R}ado theorem for vector spaces},
   JOURNAL = {J. Combin. Theory Ser. A},
  FJOURNAL = {Journal of Combinatorial Theory. Series A},
    VOLUME = {43},
      YEAR = {1986},
    NUMBER = {2},
     PAGES = {228--236},
      ISSN = {0097-3165,1096-0899},
   MRCLASS = {05A05},
  MRNUMBER = {867648},
MRREVIEWER = {G.\ F.\ Clements},
       DOI = {10.1016/0097-3165(86)90063-4},
       URL = {https://doi.org/10.1016/0097-3165(86)90063-4},
}

@article {MR657052,
    AUTHOR = {Moon, Aeryung},
     TITLE = {An analogue of the {E}rd{\H{o}}s-{K}o-{R}ado theorem for the
              {H}amming schemes {$H(n,\,q)$}},
   JOURNAL = {J. Combin. Theory Ser. A},
  FJOURNAL = {Journal of Combinatorial Theory. Series A},
    VOLUME = {32},
      YEAR = {1982},
    NUMBER = {3},
     PAGES = {386--390},
      ISSN = {0097-3165,1096-0899},
   MRCLASS = {05A05},
  MRNUMBER = {657052},
MRREVIEWER = {Zolt\'an\ F\"uredi},
       DOI = {10.1016/0097-3165(82)90054-1},
       URL = {https://doi.org/10.1016/0097-3165(82)90054-1},
}

@article {MR887358,
    AUTHOR = {Huang, Ta Yuan},
     TITLE = {An analogue of the {E}rd{\H{o}}s-{K}o-{R}ado theorem for the
              distance-regular graphs of bilinear forms},
   JOURNAL = {Discrete Math.},
  FJOURNAL = {Discrete Mathematics},
    VOLUME = {64},
      YEAR = {1987},
    NUMBER = {2-3},
     PAGES = {191--198},
      ISSN = {0012-365X,1872-681X},
   MRCLASS = {05C30 (05A05 94B25)},
  MRNUMBER = {887358},
       DOI = {10.1016/0012-365X(87)90188-9},
       URL = {https://doi.org/10.1016/0012-365X(87)90188-9},
}

@article {MR3063159,
    AUTHOR = {Tanaka, Hajime},
     TITLE = {The {E}rd{\H{o}}s-{K}o-{R}ado theorem for twisted {G}rassmann
              graphs},
   JOURNAL = {Combinatorica},
  FJOURNAL = {Combinatorica. An International Journal on Combinatorics and
              the Theory of Computing},
    VOLUME = {32},
      YEAR = {2012},
    NUMBER = {6},
     PAGES = {735--740},
      ISSN = {0209-9683,1439-6912},
   MRCLASS = {05E30 (05D05)},
  MRNUMBER = {3063159},
MRREVIEWER = {Alexander\ L.\ Gavrilyuk},
       DOI = {10.1007/s00493-012-2798-5},
       URL = {https://doi.org/10.1007/s00493-012-2798-5},
}

@article{Bajnok-2006,
    AUTHOR = {Bajnok, B\'ela},
     TITLE = {On {E}uclidean designs},
   JOURNAL = {Adv. Geom.},
  FJOURNAL = {Advances in Geometry},
    VOLUME = {6},
      YEAR = {2006},
    NUMBER = {3},
     PAGES = {423--438},
      ISSN = {1615-715X,1615-7168},
   MRCLASS = {05B30 (33C45 41A55 51E05)},
  MRNUMBER = {2248260},
MRREVIEWER = {Ian\ Blake},
       DOI = {10.1515/ADVGEOM.2006.026},
       URL = {https://doi.org/10.1515/ADVGEOM.2006.026},
}

@article{Dunkl-1978,
    AUTHOR = {Dunkl, Charles F.},
     TITLE = {An addition theorem for some {$q$}-{H}ahn polynomials},
   JOURNAL = {Monatsh. Math.},
  FJOURNAL = {Monatshefte f\"ur Mathematik},
    VOLUME = {85},
      YEAR = {1978},
    NUMBER = {1},
     PAGES = {5--37},
      ISSN = {0026-9255,1436-5081},
   MRCLASS = {33A70},
  MRNUMBER = {486703},
MRREVIEWER = {W.\ A.\ Al-Salam},
       DOI = {10.1007/BF01300958},
       URL = {https://doi.org/10.1007/BF01300958},
}

@article{BPV-2012,
    AUTHOR = {Bachoc, Christine and Passuello, Alberto and Vallentin, Frank},
     TITLE = {Bounds for projective codes from semidefinite programming},
   JOURNAL = {Adv. Math. Commun.},
  FJOURNAL = {Advances in Mathematics of Communications},
    VOLUME = {7},
      YEAR = {2013},
    NUMBER = {2},
     PAGES = {127--145},
      ISSN = {1930-5346,1930-5338},
   MRCLASS = {94B65 (90C22)},
  MRNUMBER = {3063504},
MRREVIEWER = {Vladimir\ D.\ Tonchev},
       DOI = {10.3934/amc.2013.7.127},
       URL = {https://doi.org/10.3934/amc.2013.7.127},
}

@article{LSK-2010,
    AUTHOR = {Koekoek, Roelof and Lesky, Peter A. and Swarttouw, Ren\'e{}
              F.},
     TITLE = {Hypergeometric orthogonal polynomials and their
              {$q$}-analogues},
    SERIES = {Springer Monographs in Mathematics},
      NOTE = {With a foreword by Tom H. Koornwinder},
 PUBLISHER = {Springer-Verlag, Berlin},
      YEAR = {2010},
     PAGES = {xx+578},
      ISBN = {978-3-642-05013-8},
   MRCLASS = {33C45 (33-02 33D45)},
  MRNUMBER = {2656096},
MRREVIEWER = {Jeremy\ Lovejoy},
       DOI = {10.1007/978-3-642-05014-5},
       URL = {https://doi.org/10.1007/978-3-642-05014-5},
}

@article{Schrijver-2005,
    AUTHOR = {Schrijver, Alexander},
     TITLE = {New code upper bounds from the {T}erwilliger algebra and
              semidefinite programming},
   JOURNAL = {IEEE Trans. Inform. Theory},
  FJOURNAL = {Institute of Electrical and Electronics Engineers.
              Transactions on Information Theory},
    VOLUME = {51},
      YEAR = {2005},
    NUMBER = {8},
     PAGES = {2859--2866},
      ISSN = {0018-9448,1557-9654},
   MRCLASS = {94A29 (05E30 16S99 90C22 94B05)},
  MRNUMBER = {2236252},
       DOI = {10.1109/TIT.2005.851748},
       URL = {https://doi.org/10.1109/TIT.2005.851748},
}

@article {Vallentin-2009,
    AUTHOR = {Vallentin, Frank},
     TITLE = {Symmetry in semidefinite programs},
   JOURNAL = {Linear Algebra Appl.},
  FJOURNAL = {Linear Algebra and its Applications},
    VOLUME = {430},
      YEAR = {2009},
    NUMBER = {1},
     PAGES = {360--369},
      ISSN = {0024-3795,1873-1856},
   MRCLASS = {90C22 (33C90)},
  MRNUMBER = {2460523},
MRREVIEWER = {Yan\ Gao},
       DOI = {10.1016/j.laa.2008.07.025},
       URL = {https://doi.org/10.1016/j.laa.2008.07.025},
}

@article {Higman-1975,
  author    = {Donald G. Higman},
  title     = {Coherent Configurations. {I}. Ordinary Representation Theory},
  journal   = {Geometriae Dedicata},
  volume    = {4},
  number    = {1},
  pages     = {1--32},
  year      = {1975},
  doi       = {10.1007/BF00147398}
}

@article{HIGMAN1988411,
title = {Strongly Regular Designs and Coherent Configurations of Type $\begin{bmatrix}3& 2 \\ & 3 \end{bmatrix}$},
journal = {European Journal of Combinatorics},
volume = {9},
number = {4},
pages = {411-422},
year = {1988},
issn = {0195-6698},
doi = {https://doi.org/10.1016/S0195-6698(88)80072-6},
url = {https://www.sciencedirect.com/science/article/pii/S0195669888800726},
author = {D.G. Higman}
}

@article {MR1345694,
    AUTHOR = {Higman, D. G.},
     TITLE = {Strongly regular designs of the second kind},
   JOURNAL = {European J. Combin.},
  FJOURNAL = {European Journal of Combinatorics},
    VOLUME = {16},
      YEAR = {1995},
    NUMBER = {5},
     PAGES = {479--490},
      ISSN = {0195-6698,1095-9971},
   MRCLASS = {05B30},
  MRNUMBER = {1345694},
MRREVIEWER = {Esther\ R.\ Lamken},
       DOI = {10.1016/0195-6698(95)90003-9},
       URL = {https://doi.org/10.1016/0195-6698(95)90003-9},
}

@article{Delsarte-1978,
  author  = {Philippe Delsarte},
  title   = {Hahn Polynomials, Discrete Harmonics, and {$t$}-Designs},
  journal = {SIAM Journal on Applied Mathematics},
  volume  = {34},
  number  = {1},
  pages   = {157--166},
  year    = {1978},
  doi     = {10.1137/0134015}
}

@article{Lato-2025,
  title={P-polynomial and bipartite coherent configurations},
  author={Lato, Sabrina},
  journal={Linear Algebra and its Applications},
  volume={708},
  pages={12--41},
  year={2025},
  publisher={Elsevier}
}

@article{FILM-2025,
  title={New Constructions of Distance-Biregular Graphs},
  author={Fern{\'a}ndez, Blas and Ihringer, Ferdinand and Lato, Sabrina and Munemasa, Akihiro},
  journal={arXiv preprint arXiv:2504.21488},
  year={2025}
}

@misc{WL-2018,
  author = {Boris Yu. Weisfeiler and Andrei A. Leman},
  title  = {The Reduction of a Graph to Canonical Form and the Algebra Which Appears Therein},
  howpublished = {English translation by Grigory Ryabov},
  year         = {2018},
  note         = {Originally published in Nauchno-Technicheskaya Informatsia, Seriya~2, No.~9, pp.~12--16, 1968},
  url          = {https://www.iti.zcu.cz/wl2018/pdf/wl_paper_translation.pdf}
}

@article{JZW-2026,
  title={Euclidean designs obtained from Q-polynomial coherent configurations},
  author={Jiang, Yuchen and Zhu, Yan and Wu, Chengju},
  journal={Journal of Algebraic Combinatorics},
  volume={64},
  number={1},
  pages={10},
  year={2026},
  publisher={Springer}
}

@article{BBHS-2010,
  author    = {Eiichi Bannai and Etsuko Bannai and Masatake Hirao and Masanori Sawa},
  title     = {Cubature formulas in numerical analysis and Euclidean tight designs},
  journal   = {European Journal of Combinatorics},
  volume    = {31},
  number    = {2},
  pages     = {423--441},
  year      = {2010},
  doi       = {10.1016/j.ejc.2009.03.035},
}

@article{GS-1970,
  author  = {Goethals, J. M. and Seidel, J. J.},
  title   = {Strongly Regular Graphs Derived from Combinatorial Designs},
  journal = {Canadian Journal of Mathematics},
  volume  = {22},
  year    = {1970},
  pages   = {597--614}
}

@article{Lisonek-1997,
  author  = {Petr Lison{\v{e}}k},
  title   = {New Maximal Two-Distance Sets},
  journal = {Journal of Combinatorial Theory, Series A},
  volume  = {77},
  number  = {2},
  pages   = {318--338},
  year    = {1997},
  doi     = {10.1006/jcta.1997.2749}
}

@incollection{BB-2010,
  author    = {Eiichi Bannai and Etsuko Bannai},
  title     = {Euclidean Designs and Coherent Configurations},
  booktitle = {Algebraic Combinatorics},
  series    = {Contemporary Mathematics},
  volume    = {531},
  pages     = {59--93},
  year      = {2010},
  publisher = {American Mathematical Society},
  address   = {Providence, RI},
  doi       = {10.1090/conm/531/10468}
}

@article{BBTZ-2022,
  author  = {Eiichi Bannai and Etsuko Bannai and Hajime Tanaka and Yan Zhu},
  title   = {Tight Relative {$t$}-Designs on Two Shells in Hypercubes, and Hahn and Hermite Polynomials},
  journal = {Ars Mathematica Contemporanea},
  volume  = {22},
  number  = {2},
  pages   = {P2.01},
  year    = {2022},
  doi     = {10.26493/1855-3974.2352.eaf}
}
\end{document}